\title{}
\author{}
\newcommand{\lastname}{}
\newcommand{\rightheadline}{\hfill\sc\lastname\hfill}
\newcommand{\leftheadline}{\hfill\sc\lastname\hfill}
\newcommand{\firstheadline}{}
\def\ps@headings{\let\@mkboth\@gobbletwo
    \gdef\@oddhead{\ifnum\value{page}=1\firstheadline
                  \else\rightheadline\rm\thepage\fi}
    \gdef\@oddfoot{}
    \gdef\@evenhead{\ifnum\value{page}=1\firstheadline
                  \else\rm\thepage\leftheadline\fi}
    \gdef\@evenfoot{}
}
\def\section{\@startsection{section}{1}{\z@}{-\bigskipamount}{\medskipamount}
{\centering\bf}}
\newcommand{\qed}{{\unskip\nobreak\hfil\penalty50\hskip .001pt \hbox{}
          \nobreak\hfil
          \vrule height 1.2ex width 1.1ex depth -.1ex
           \parfillskip=0pt\finalhyphendemerits=0\medbreak}\rm}
\renewcommand{\lastname}{Reilly}
\def\trivlist{\vspace{-\lastskip}\medbreak
 \@trivlist \labelwidth\z@ \leftmargin\z@
\itemindent\z@   \let\@itemlabel\@empty%
\def\makelabel##1{##1}}
\def\@thmcounter#1{\noexpand\arabic{#1}}
\def\@thmcountersep{}
\def\@begintheorem#1#2{\trivlist \item[\hskip 
\labelsep{\bf #1\ #2.\quad}]\it}
\def\@opargbegintheorem#1#2#3{\it \trivlist
      \item[\hskip \labelsep{\bf #1\ #2.\quad{\rm #3}}]}
\newtheorem{theorem}{Theorem}[section]
\newtheorem{proposition}[theorem]{Proposition}
\newtheorem{lemma}[theorem]{Lemma}
\newtheorem{corollary}[theorem]{Corollary}
\newtheorem{notation}[theorem]{Notation}
\newenvironment{proof}{\begin{trivlist}\item[\hskip%
\labelsep{\bf Proof.\quad}]}%
{\hfill\qed\rm\end{trivlist}}
\def\Proof.{\rm
  \par\ifdim\lastskip<\medskipamount\vspace{-\lastskip}\fi\smallskip
           \noindent {\bf Proof.}\enspace}
\def\tr{{\rm tr}\,}
\def\ltr{{\rm ltr}\,}
\def\rtr{{\rm rtr}\,}
\newcommand{\arttype}{}
\def\titlepage{\thispagestyle{headings}
 \vbox{\vspace{.5truecm}}
\noindent \arttype\par
\begin{center}
{\vbox{\baselineskip=15pt\large\bf \@title\boldmath}}
  \vskip .75truecm  
{\bf \@author} 
\end{center}

\def\abstract{\baselineskip=8pt\centerline{\bf Abstract}
\vspace{.05in} \noindent\quotation}
\def\endabstract{\if@twocolumn\else\endquotation\fi}}
\begin{document}

\input prepictex
\input pictex
\input postpictex

\ \\[0.5cm]

\title{Kernel Classes of Varieties of Completely Regular Semigroups I}
\author{Norman R. Reilly}
\titlepage

\vskip1.1cm

{\footnotesize
\begin{abstract}
\noindent
Several complete congruences on the lattice $\mathcal{L}(\mathcal{CR})$ of varieties of completely regular 
semigroups have been fundamental to studies of the structure of $\mathcal{L}(\mathcal{CR})$.  These are 
the kernel relation $K$, the left trace relation $T_{\ell}$, the right trace relation $T_r$ and their 
intersections $K\cap T_{\ell}, K\cap T_r$.  However, with 
the exception of the lattice of all band varieties which happens to coincide with the kernel class of the trivial variety, 
almost nothing is known about the internal structure of individual $K$-classes beyond the fact that they are 
intervals in $\mathcal{L}(\mathcal{CR})$.  Here we present a number of general results that are 
pertinent to the study of $K$-classes.  This includes a variation of the renowned Pol\'{a}k Theorem and 
its relationship to the complete retraction $\mathcal{V} \longrightarrow \mathcal{V}\cap \mathcal{B}$, 
where $\mathcal{B}$ denotes the variety of bands.   These results are then applied, here and in a sequel, 
to the 
detailed analysis of certain families of $K$-classes.  The paper concludes with results hinting at the 
complexity of $K$-classes in general, such as that the classes of relation $K/K_{\ell}$ may have the 
cardinality of the continuum. 
\end{abstract}
}
\vskip1cm

\section{Introduction}

After presenting the necessary background information, we will develop an alternative formulation of the important 
representation of the interval $[\mathcal{S}, \mathcal{CR}]$ due to Pol\'{a}k.  Here 
$\mathcal{S, CR}$ denote the varieties of semilattices and completely regular semigroups, respectively.  The 
objectives are twofold.  Pol\'{a}k's Theorem contains certain key conditions that are phrased in 
the language of fully invariant congruences on the free completely regular semigroup on a countably infinite set.  
However, in many situations 
it is more intuitive to work in the context of varieties.  Of course these are equivalent, but it is often simpler to work 
in the language of varieties.   So the first goal is to develop a formulation of Pol\'{a}k's result 
completely in terms of varieties and operators on the lattice of varieties.  In doing so we obtain a second goal which is 
a series of conditions that are easier to work with in many situations.  Trotter [T] showed that the mapping $\mu_{\mathcal{B}}: 
\mathcal{V} \longrightarrow \mathcal{V}\cap \mathcal{B}$, where $\mathcal{B}$ is the variety of bands, is a complete 
retraction of  $\mathcal{L}(\mathcal{CR})$ onto $\mathcal{L}(\mathcal{B})$, thereby inducing a complete congruence on $\mathcal{L}(\mathcal{CR})$ which, following Petrich,  
we denote by ${\bf B}$.   The classes of ${\bf B}$ are therefore intervals and we 
denote the ${\bf B}$-class of $\mathcal{V} \in \mathcal{L}(\mathcal{CR})$ by $[\mathcal{V}_{{\bf B}}, \mathcal{V}^{{\bf B}}]$.   
This provides another framework  
with respect to which we can analyze certain $K$-classes and leads us to demonstrate how  the description of 
varieties of the form $\mathcal{V}^{\bf B}$ due to Reilly and Zhang [RZ2000] is an integral part of 
Pol\'{a}k's Theorem.   In the final 
section, we study more specific representations of the intervals $[\mathcal{S}, \mathcal{O}]$ and 
$[\mathcal{S}, L\mathcal{O}]$, where $\mathcal{O}$ (respectively, $L\mathcal{O}$) is the variety of all orthodox 
(respectively, locally orthodox) completely regular semigroups associated 
with  Pol\'{a}k's Theorem and, in each case, 
break them down into the natural partition corresponding to whether a variety contains the variety 
$\mathcal{B}$ or not.\\

Apart from a relatively short
list of varieties (orthodox, locally orthodox, bands of groups and
some related varieties) not a great deal of information exists dealing
with the structure of the interval $[\mathcal{B}, \mathcal{CR}].$
In a sequel we will provide a detailed description of a large class of intervals that 
are contained within  $[\mathcal{B}, \mathcal{CR}]$ and within $K$-classes.  We will also 
examine in detail the structure of $K$-classes of the form 
$\mathcal{A}_p K$, where $\mathcal{A}_p$ denotes the variety of abelian groups of prime 
exponent $p$.   We will completely determine the structure of the parts of these kernel 
classes that contain $\mathcal{B}$ and also the complement of those parts in  
$\mathcal{A}_p K$. 
\vskip1cm


\section{Background}

We refer the reader to the book [PR99] for a general background on
completely regular semigroups and for all undefined notation and
terminology.  All semigroups under consideration are assumed to be completely 
regular semigroups.  Unless otherwise mentioned, throughout this section $S$ denotes 
an arbitrary completely regular semigroup.

For $\rho$ an equivalence relation on a nonempty set $X$ and $x\in
X,$ $x\rho$ denotes the $\rho$-class  of $x.$ In any lattice $L,$
for $a,b \in L$ such that $a\leq b$, we define the \emph{interval} $[a,b]$
to be $\{c\in L \mid a\leq c\leq b\}.$

We denote by $E(S)$ the set of idempotents of $S$ 
and by $\mathcal{C}(S)$ the lattice of congruences on $S.$  For any equivalence relation $\rho$ 
on $S$, $\rho^0$ denotes the largest congruence on $S$ 
contained in the relation $\rho$.  We denote by $\tau_S$ (or simply $\tau$) the largest
idempotent pure congruence on $S$, that is, the largest congruence contained 
in the equivalence relation on $S$ with just the two equivalence classes $E(S)$ and $S\backslash E(S)$. 
For $\rho
\in \mathcal{C}(S),$ the \emph{kernel\/} and the \emph{trace\/} of
$\rho$ are given by $\ker \rho = \{a\in S \mid a\;\rho\;e \;\;\mbox{for
some}\;\; e\in E(S)\},$ $\,\tr\,\rho = \rho |_{E(S)}.$  The 
\emph{left\/} and the \emph{right traces\/} of $\rho $ are $\ltr \rho
= \tr (\rho \vee \mathcal{L})^0$ and $\rtr \rho = \tr (\rho \vee
\mathcal{R})^0,$ where the join is taken within the lattice of
equivalence relations on $S.$  These relations lead to several
important relations on the lattice of congruences $C(S)$ on $S$ defined as follows:
$$
\begin{array}{l}
\lambda K\rho \,\Longleftrightarrow\, \ker \lambda = \ker \rho \,,
\quad \lambda T_{\ell}\rho  \,\Longleftrightarrow\, \ltr\lambda  =
\ltr \rho \,, \\[0.2cm]
\lambda T\rho \,\Longleftrightarrow\, \tr \lambda
= \tr \rho \,, \quad \lambda T_r \rho \,\Longleftrightarrow\, \rtr
\lambda = \rtr \rho \,, \\[0.2cm]
K_{\ell} = K\cap T_{\ell}\,, \quad K_r = K\cap T_r\,.
\end{array}
$$
\vskip0.1cm

\noindent
The classes of these relations are intervals and we write
$$
\rho P = [\rho_P, \rho^P]\,, \qquad P\in \{K, T_{\ell}, T_r, 
K_{\ell}, K_r\}\,.
$$

As $S$ is a union of its
(maximal) subgroups, on $S$ we have a unary operation
$a\longrightarrow a^{-1},$ where $a^{-1}$ is the inverse of $a$ in
the maximal subgroup of $S$ containing $a.$ Hence for the purpose
of studying varieties of completely regular semigroups, it is customary 
to consider them as algebras with the binary operation of multiplication and the unary
operation of inversion. We write $a^0 = aa^{-1} \;(= a^{-1}a)$ for
any element $a$ of $S.$

The class $\mathcal{CR}$ of all completely regular semigroups
constitutes a variety. It is defined, within the class of unary
semigroups by the identities
$$
a(bc) = (ab)c\,, \quad a = aa^{-1}a\,, \quad (a^{-1})^{-1} = a\,,
\quad aa^{-1} = a^{-1}a\,.
$$

For $\mathcal{V} \in \mathcal{L}
(\mathcal{CR}),$ we denote by $\mathcal{L}(\mathcal{V})$  the lattice of
all subvarieties of $\mathcal{V}.$ 
The following varieties occur frequently:  $\mathcal{T, LZ, RZ, RB, G, R\mbox{e}G, CS}$ - the varieties of 
trivial semigroups,  left zero semigroups, right zero semigroups, rectangular bands,  groups, rectangular groups and completely simple semigroups, 
respectively, all of which are subvarieties of $\mathcal{CS}$, and $\mathcal{S, LNB, RNB, LRB, RRB, RRO, B, O}$ - 
the varieties of semilattices, left normal bands, 
right normal bands, left regular bands, right regular bands, right regular orthogroups, bands and 
orthogroups, respectively, 
all of which are varieties of completely regular semigroups containing 
$\mathcal{S}$.   \\  

\noindent
For any varieties $\mathcal{U}\in \mathcal{L}(\mathcal{G}), \mathcal{V}\in \mathcal{L}({CS}), \mathcal{W} 
\in \mathcal{L}(\mathcal{CR})$, we define 
\begin{eqnarray*}
H\mathcal{U} &=&  \{S\in \mathcal{CR}| H_e \in \mathcal{U}\;\mbox{for all}\: e \in E(S)\},\\
L\mathcal{W} &=& \{S\in \mathcal{CR}| eSe \in \mathcal{W}\;\mbox{for all}\: e \in E(S)\}\\
\mathcal{OW} &=& \mathcal{O} \cap \mathcal{W}.
\end{eqnarray*}
The classes $H\mathcal{U}, D\mathcal{V}$ and $L\mathcal{W}$ are all varieties.  
If $eSe \in \mathcal{W}$ for all $e \in E(S)$, then we say that  $S$ is {\em locally} (in) $\mathcal{W}$.  In particular, $L\mathcal{O}$ denotes the variety of {\em locally orthodox} completely regular semigroups 
and $L\mathcal{RRO}$ denotes the variety of locally right regular orthodox completely regular semigroups.  
See [PR99] for more information on all of the above varieties.\\

Let $X$ denote a countably infinite set and $CR_X$ denote the free completely regular semigroup on 
$X$.  See Clifford [C] for a description of $CR_X$.  The relations $K, T_{\ell}, T_r, K_{\ell}, K_r$ on 
the lattice of congruences on $CR_X$ restrict to complete congruences on the lattice of fully invariant congruences on $CR_X$ ([Pa], Theorems 9, 11).  \\

Via the usual antiisomorphism between 
the lattice of fully invariant congruences on the free completely regular semigroup $CR_X$ and 
$\mathcal{L}(\mathcal{CR})$, the relations $K,  
\,T_{\ell}, T_r, K_{\ell}, K_r$ defined above on $CR_X$ transfer to $\mathcal{L}
(\mathcal{CR})$ in a familiar way. We use the same notation for these
relations on $\mathcal{L}(\mathcal{CR})$ (and their intervals) as
for the corresponding ones on the lattice of congruences on $CR_X$.  In particular, for 
$P \in \{K, T_{\ell}, T_r, K_{\ell}, K_r\}, \mathcal{V} \in \mathcal{L}(\mathcal{CR})$, the $P$-class of 
$\mathcal{V}$ is an interval which we write as $[\mathcal{V}_P, \mathcal{V}^P]$.  This leads to two operators associated with each of 
these relations which are referred to as the {\em lower} and {\em upper} operators, respectively:
\begin{equation}
\mathcal{V}\; \longrightarrow \;\mathcal{V}_P \;\mbox{and}\; \mathcal{V} \longrightarrow \mathcal{V}^P.
\end{equation}
For these operators, we
write for example, $\mathcal{V}_K,$ $\mathcal{V}^{KT_r} =
(\mathcal{V}^K)^{T_r},$ and so on.  It is also useful to note that 
$$
\mathcal{U}\; P\; \mathcal{V} \Longleftrightarrow \mathcal{U}^P = \mathcal{V}^P 
\Longleftrightarrow \mathcal{U}_P = \mathcal{V}_P.
$$
Bases of identities for varieties of the form $\mathcal{V}^P$ can be obtained from 
any basis for $\mathcal{V}$ (see [J], [Pa], [PR90] and [R]).   


The following lemma lists just a few of the many known properties of these operators.

\begin{lemma}
Let $\mathcal{V} \in [\mathcal{S}, \mathcal{CR}].$
\begin{enumerate}
\item[{\rm (i)}] $(\mathcal{V}^K)_{T_{\ell}} = \mathcal{V}^K$.
\item[{\rm (ii)}]  The following intervals constitute the complete set of the $K$-classes 
within $\mathcal{L}(L\mathcal{O})$:
$$
\{[\mathcal{V}, \mathcal{O}H\mathcal{V}]\mid \mathcal{V} \in \mathcal{L}(\mathcal{G})\},\;
\{[\mathcal{V}, L\mathcal{O}D\mathcal{V}]\mid \mathcal{V} \in \mathcal{L}(\mathcal{CS}) 
\backslash \mathcal{L}(\mathcal{R}e\mathcal{G})\}
$$
In particular, $\mathcal{T}K = [\mathcal{T}, \mathcal{B}] = \mathcal{L}(\mathcal{B})$.
\item[{\rm (iii)}] The following intervals are $T_{\ell}$-classes:
$$
[\mathcal{S}, \mathcal{LRO}], [\mathcal{RNB}, L\mathcal{LRO}].
$$
\item[{\rm (iv)}] 
\begin{eqnarray*}
\mathcal{V}^{T_{\ell}} &=& \{S \in \mathcal{CR}| S/\mathcal{L}^0 \in \mathcal{V}\}\\
\mathcal{V}^{K_{\ell}} &=& \{S \in \mathcal{CR}| S/(\tau\cap \mathcal{L})^0 \in \mathcal{V}\}.
\end{eqnarray*}
\end{enumerate}
\end{lemma}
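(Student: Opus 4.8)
The plan is to work throughout at the level of fully invariant congruences on $CR_X$ and to transfer statements via the antiisomorphism with $\mathcal{L}(\mathcal{CR})$, using the kernel--trace description of congruences on a completely regular semigroup: a congruence is determined by its kernel and its trace, and its left trace is recovered as $\tr(\rho\vee\mathcal{L})^0$. Under this antiisomorphism the variety upper operator $\mathcal{V}\mapsto\mathcal{V}^P$ corresponds to the congruence lower operator $\rho\mapsto\rho_P$, and the variety lower operator to the congruence upper operator; I would use this dictionary repeatedly.

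I would prove (iv) first, since the remaining parts reduce to it. Set $\mathcal{W}=\{S\in\mathcal{CR}\mid S/\mathcal{L}^0\in\mathcal{V}\}$. First I would check that $\mathcal{W}$ is a variety (closure under homomorphic images, subalgebras and direct products, resting on the compatibility of the assignment $S\mapsto S/\mathcal{L}^0$ with these constructions), that $\mathcal{V}\subseteq\mathcal{W}$, and that $\mathcal{W}\,T_\ell\,\mathcal{V}$, the last because the left trace is computed modulo $\mathcal{L}$ via $(\rho\vee\mathcal{L})^0$ and is therefore insensitive to the $\mathcal{L}^0$-collapse. Maximality of $\mathcal{W}$ among varieties $T_\ell$-related to $\mathcal{V}$ then forces $\mathcal{W}=\mathcal{V}^{T_\ell}$. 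The second identity is entirely parallel, with $\mathcal{L}^0$ replaced by $(\tau\cap\mathcal{L})^0$: since $K_\ell=K\cap T_\ell$, one must collapse simultaneously the kernel direction, governed by the largest idempotent-pure congruence $\tau$, and the left-trace direction, governed by $\mathcal{L}$, so the governing congruence is exactly $(\tau\cap\mathcal{L})^0$.

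With (iv) in hand, (i) and (iii) are short. For (i), by the dictionary it suffices to show that the smallest fully invariant congruence $\rho_K$ with a prescribed kernel is left-trace maximal, i.e. $(\rho_K)^{T_\ell}=\rho_K$; this follows from the explicit kernel--trace form of $\rho_K$, whose left trace is already as large as its kernel permits, so that no strictly larger congruence can share it. For (iii), I would apply the formula $\mathcal{V}^{T_\ell}=\{S\mid S/\mathcal{L}^0\in\mathcal{V}\}$ with $\mathcal{V}=\mathcal{S}$ and with $\mathcal{V}=\mathcal{RNB}$: one verifies directly that $S/\mathcal{L}^0$ is a semilattice exactly when $S\in\mathcal{LRO}$, and lies in $\mathcal{RNB}$ exactly when $S\in L\mathcal{LRO}$, while a short separate check shows $\mathcal{S}$ and $\mathcal{RNB}$ are each $T_\ell$-minimal (no smaller variety shares their left trace); together these pin the two intervals as full $T_\ell$-classes.

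The real work is (ii), which I expect to be the main obstacle. Here one must compute, for every $\mathcal{V}\in\mathcal{L}(L\mathcal{O})$, both endpoints of its $K$-class inside $\mathcal{L}(L\mathcal{O})$. The strategy is to use the structure theory of locally orthodox semigroups to show that $\mathcal{V}_K$ is always either a group variety or a completely simple variety outside $\mathcal{L}(\mathcal{R}e\mathcal{G})$, and then to identify $\mathcal{V}^K$ through the operators $H$, $D$, $L$ and $\mathcal{O}$: for a group variety $\mathcal{V}$ the largest variety with the same kernel is $\mathcal{O}H\mathcal{V}$, and for a completely simple $\mathcal{V}$ it is $L\mathcal{O}D\mathcal{V}$. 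The delicate points are showing that these two families are disjoint and exhaustive and that the kernel is genuinely constant along each listed interval; the special case $\mathcal{V}=\mathcal{T}$ recovers $\mathcal{O}H\mathcal{T}=\mathcal{B}$ and hence $\mathcal{T}K=[\mathcal{T},\mathcal{B}]=\mathcal{L}(\mathcal{B})$.
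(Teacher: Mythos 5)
The first thing to observe is that the paper offers no proof of this lemma: it is a compilation of background facts and the printed ``proof'' consists entirely of citations --- (i) and (ii) to Pol\'{a}k [Po2], Theorems 2.4(4) and 2.6, (iii) to [Po2] and Pastijn [Pa], (iv) to [Pa] and [PR90]. Your outlines for (i), (iii) and (iv) do follow the lines of the arguments in those sources: deriving (iii) from the description of $\mathcal{V}^{T_{\ell}}$ in (iv) is the standard route, and the identification of $\mathcal{V}^{K_{\ell}}$ via $(\tau\cap\mathcal{L})^0$ is exactly how [PR90] proceeds. Two caveats. Your justification of (i) (``the left trace of $\rho_K$ is already as large as its kernel permits, so no strictly larger congruence can share it'') is closer to a restatement of the claim than to an argument; the actual proof rests on the explicit construction of the least congruence with a prescribed kernel and a computation of its left and right traces. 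And in (iii) you should be alert to the phenomenon the paper itself flags at the start of Section 3: low in the lattice the variety operators $T_{\ell}$, $T_r$ and the congruence operators diverge (e.g.\ the congruence-level minimum of the class of $\mathcal{RNB}$ corresponds to $\mathcal{RZ}$, not $\mathcal{RNB}$), so the ``short separate check'' that $\mathcal{S}$ and $\mathcal{RNB}$ are $T_{\ell}$-minimal within $[\mathcal{S},\mathcal{CR}]$ is precisely where that care is needed.

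The genuine gap is in (ii). What you have written there is a statement of the theorem together with a list of the things that would have to be checked (``the delicate points are showing that these two families are disjoint and exhaustive and that the kernel is genuinely constant along each listed interval''), not a proof. This is Pol\'{a}k's classification of the $K$-classes inside $\mathcal{L}(L\mathcal{O})$ and it is one of the deeper results the paper imports: one must show (a) that for every $\mathcal{V}\in\mathcal{L}(L\mathcal{O})$ the least element $\mathcal{V}_K$ of its $K$-class lies in $\mathcal{L}(\mathcal{G})\cup(\mathcal{L}(\mathcal{CS})\backslash\mathcal{L}(\mathcal{R}e\mathcal{G}))$, and (b) that $\mathcal{O}H\mathcal{V}$ (respectively $L\mathcal{O}D\mathcal{V}$) is the greatest variety whose associated fully invariant congruence on $CR_X$ has the same kernel as that of $\mathcal{V}$. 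Neither follows from ``the structure theory of locally orthodox semigroups'' in any routine way; (b) in particular requires the characterization $\mathcal{V}^K=\{S\in\mathcal{CR}\mid S/\tau_S\in\mathcal{V}\}$ (itself a theorem of [PR90]) together with an argument that, for a group variety $\mathcal{V}$, the condition $S/\tau_S\in\mathcal{V}$ forces $E(S)$ to be a subsemigroup and the subgroups $H_e$ to embed in $S/\tau_S$. If your intent is to supply proofs rather than citations, that is where the real work lies; as it stands your paragraph on (ii) cannot be counted as a proof of that part.
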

\begin{proof}
(i) See [Po2], Theorem 2.4(4).  
(ii) See [Po2], Theorem 2.6.
(iii)  See [Po2], Theorem 1.2 and Pastijn [Pa].  
(iv)  See [Pa] and [PR90].
\end{proof}

Let $\Theta $ be the set of all (nonempty) words over the alphabet
$\{T_{\ell}, T_r\}$ of the form $P_1\cdots P_n,$ where $P_i \in
\{T_{\ell}, T_r\}$ and $P_i \neq P_{i+1}$ for $i = 1,\ldots ,n-1$
with \vspace*{0.1cm}multiplication
$$
(P_1\cdots P_m) (Q_1\cdots Q_n) = \left\{ \begin{array}{ll}
P_1 \cdots P_mQ_1\cdots Q_n & \quad \mbox{if}\;\; P_m\neq Q_1\,,
\\[0.2cm]
P_1\cdots P_mQ_2\cdots Q_n & \quad \mbox{otherwise.}
\end{array} \right.
$$
\vskip0.1cm

\noindent
Clearly $\Theta $ is a semigroup. We adjoin the \emph{empty word}
$\emptyset$ to $\Theta $ to form $\Theta^1.$  (The monoid $\Theta^1$ was introduced 
in [R1990].)  We can consider $\Theta $ as 
being the free semigroup $\{T_{\ell},
T_r\}^+$ on the set $\{T_{\ell},T_r\}$ modulo the relations
$T_{\ell}^2 = T_{\ell},$ $T_r^2 = T_r.$ To emphasize that $\tau \in
\Theta $ is a  word in $T_{\ell}, T_r$ we might write $\tau = \tau
(T_{\ell}, T_r).$ If we replace every occurrence of $T_{\ell}$
(respectively, $T_r$) in $\tau $ by $K_{\ell}$ (respectively, $K_r$)
then we obtain a word over $\{K_{\ell}, K_r\}$ which we will denote
by $\tau (K_{\ell}, K_r).$  \\

Let $\tau\in \Theta^1$.  We carry over the notation for words in $X^+$ to $\Theta^1$ so that we  
denote by $|\tau|$ the {\em length} of $\tau$, with $|\emptyset| = 0$, and 
we denote by $h(\tau)$ (respectively, $t(\tau)$) the first (respectively, last) letter in $\tau$.  We also write $\overline{\tau}$ for the 
mirror image of the word $\tau$.\\

There is a natural way to associate a lower operator and an upper operator on the lattice $\mathcal{L}(\mathcal{CR})$ with any element $\tau \in \Theta$.  We simply define the operations by induction on the length of $\tau$.   Let $\tau\in \Theta^1$.  For $\tau = \emptyset$, we define $\mathcal{V}_{\emptyset} = 
\mathcal{V}^{\emptyset} = \mathcal{V}$.   If $|\tau| = 1$, then we have already defined 
such operators in (1) above.  So assume that 
$\tau = \sigma T_p$ where $p \in \{\ell, r\}$.  Let $\mathcal{V} \in \mathcal{L}(\mathcal{CR})$.  Then define
$$
\mathcal{V}_{\tau} = (\mathcal{V}_{\sigma})_{T_p},\;\; \mathcal{V}^{\tau} = (\mathcal{V}^{\sigma})^{T_p}.
$$
In a similar manner to what we did above for $T_{\ell}, T_r$, we can define lower and upper 
operators for words in $K_{\ell}, K_r$, that is, for words of the form $\tau(K_{\ell}, K_r)$. \\

\noindent
All operators of the form $\mathcal{V}\longrightarrow \mathcal{V}^{\tau}, \mathcal{V}_{\tau}, 
\mathcal{V}^{\tau(K_{\ell},K_r)}, \mathcal{V}_{\tau(K_{\ell}, K_r)}$ are {\em order-preserving}.\\

\noindent
By induction, from Lemma 2.1 (i), (ii) and the above definitions, we have the important fact
\begin{equation}
\mathcal{B}_{\tau} = \mathcal{B}\; \mbox{for all}\; \tau \in \Theta^1.
\end{equation}
The relation $\leq$ defined by
$$
\sigma \leq \tau \quad \mbox{if} \quad |\sigma|  > |\tau| \quad \mbox{or}
\quad \sigma = \tau \qquad\quad (\sigma,\tau \in \Theta^1)
$$
is easily seen to be a partial order which can be depicted as
\[ \begin{minipage}[t]{12cm}
\beginpicture
\setcoordinatesystem units <0.8truecm,0.5truecm>
\setplotarea x from 0 to 12, y from 0 to 9
\setlinear
\plot 4.6 1  6 2.4 /
\plot 7.4 1  6 2.4 /
\plot 6 2.4  4.6 3.8 /
\plot 6 2.4  7.4 3.8 /
\plot 4.6 3.8  6 5.2 /
\plot 7.4 3.8  6 5.2 /
\plot 6 5.2  4.6 6.6 /
\plot 6 5.2  7.4 6.6 /
\plot 4.6 6.6  6 8 /
\plot 7.4 6.6  6 8 /

\plot 4.6 1  5 0.6 /
\plot 7.4 1  7 0.6 /

\put {\circle*{2.5}} at 4.65 1
\put {\circle*{2.5}} at 7.475 1
\put {\circle*{2.5}} at 4.65 3.8
\put {\circle*{2.5}} at 7.475 3.8
\put {\circle*{2.5}} at 4.65 6.6
\put {\circle*{2.5}} at 7.475 6.6
\put {\circle*{2.5}} at 6.075 8

\put {$T_rT_{\ell}T_r$} at 3.75 1
\put {$T_{\ell}T_r$} at 3.975 3.8
\put {$T_r$} at 4.2 6.6
\put {$\emptyset$} at 6 8.35
\put {$T_{\ell}$} at 7.825 6.6
\put {$T_rT_{\ell}$} at 8.075 3.8
\put {$T_{\ell}T_rT_{\ell}$} at 8.3 1
\endpicture
\end{minipage} \]


\begin{lemma}
\hspace*{-0.5cm}{\rm ([RZ2000] Lemma 4.11).} \ \,For any
$\mathcal{V} \in \mathcal{L}(\mathcal{CR})$ and $\sigma \in \Theta^1,$
we have
$$
\mathcal{V}^\sigma \cap \mathcal{V}^K = \mathcal{V}^{\sigma (K_{\ell},
K_r)}\,.
$$
\end{lemma}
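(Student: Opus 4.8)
The plan is to proceed by induction on the length $|\sigma|$, the heart of the matter being the two single-letter identities $\mathcal{V}^{K_{\ell}} = \mathcal{V}^K \cap \mathcal{V}^{T_{\ell}}$ and $\mathcal{V}^{K_r} = \mathcal{V}^K \cap \mathcal{V}^{T_r}$. These I would obtain directly from the definitions $K_{\ell} = K \cap T_{\ell}$ and $K_r = K \cap T_r$: as these are equalities of relations, the $K_p$-class of $\mathcal{V}$ is exactly the intersection of its $K$-class and its $T_p$-class, so that interval has top element $\mathcal{V}^K \cap \mathcal{V}^{T_p}$, giving $\mathcal{V}^{K_p} = \mathcal{V}^K \cap \mathcal{V}^{T_p}$ for $p \in \{\ell, r\}$. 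The base case $\sigma = \emptyset$ is immediate, since $\mathcal{V} \le \mathcal{V}^K$ yields $\mathcal{V}^{\emptyset} \cap \mathcal{V}^K = \mathcal{V} \cap \mathcal{V}^K = \mathcal{V} = \mathcal{V}^{\emptyset(K_{\ell}, K_r)}$.

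For the inductive step I would write $\tau = \sigma T_p$ with $t(\tau) = T_p$, so that by the recursive definitions $\mathcal{V}^{\tau} = (\mathcal{V}^{\sigma})^{T_p}$ and $\mathcal{V}^{\tau(K_{\ell}, K_r)} = (\mathcal{V}^{\sigma(K_{\ell}, K_r)})^{K_p}$. Setting $\mathcal{W} = \mathcal{V}^{\sigma} \cap \mathcal{V}^K$, the induction hypothesis gives $\mathcal{W} = \mathcal{V}^{\sigma(K_{\ell}, K_r)}$, whence $\mathcal{V}^{\tau(K_{\ell}, K_r)} = \mathcal{W}^{K_p}$, and the single-letter identity applied to $\mathcal{W}$ turns this into $\mathcal{W}^K \cap \mathcal{W}^{T_p}$. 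The next step is to record that $\mathcal{W}^K = \mathcal{V}^K$: since each upper operator is extensive (as $\mathcal{V}^P$ is the top of an interval containing $\mathcal{V}$) we have $\mathcal{V} \le \mathcal{V}^{\sigma}$, so $\mathcal{V} \le \mathcal{W} \le \mathcal{V}^K$, and as $K$-classes are intervals, hence convex, $\mathcal{W}$ lies in the $K$-class of $\mathcal{V}$. Thus $\mathcal{V}^{\tau(K_{\ell}, K_r)} = \mathcal{V}^K \cap \mathcal{W}^{T_p}$, and the lemma reduces to the single equality
$$
\mathcal{V}^K \cap \mathcal{W}^{T_p} = \mathcal{V}^K \cap (\mathcal{V}^{\sigma})^{T_p} = \mathcal{V}^{\tau} \cap \mathcal{V}^K .
$$

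The inclusion $\mathcal{V}^K \cap \mathcal{W}^{T_p} \le \mathcal{V}^K \cap (\mathcal{V}^{\sigma})^{T_p}$ is immediate from $\mathcal{W} \le \mathcal{V}^{\sigma}$ and monotonicity of the $T_p$-operator. The reverse inclusion is the main obstacle, and it is precisely here that the congruence nature of $T_p$, rather than mere order-preservation, is indispensable. Since $T_p$ is a complete congruence on $\mathcal{L}(\mathcal{CR})$ it is compatible with finite meets, so combining $\mathcal{V}^{\sigma}\;T_p\;(\mathcal{V}^{\sigma})^{T_p}$ with the reflexive relation $\mathcal{V}^K\;T_p\;\mathcal{V}^K$ and meeting yields
$$
\mathcal{W} = \mathcal{V}^{\sigma} \cap \mathcal{V}^K \;T_p\; (\mathcal{V}^{\sigma})^{T_p} \cap \mathcal{V}^K .
$$
Writing $\mathcal{X}$ for the right-hand side, this places $\mathcal{X}$ in the $T_p$-class of $\mathcal{W}$, so $\mathcal{X} \le \mathcal{X}^{T_p} = \mathcal{W}^{T_p}$; together with $\mathcal{X} \le \mathcal{V}^K$ this gives $\mathcal{X} \le \mathcal{V}^K \cap \mathcal{W}^{T_p}$, the required reverse inclusion, closing the induction. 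I expect the compatibility-with-meet transfer to be the only genuinely delicate point; everything else is bookkeeping with the recursive definitions of the operators and the interval structure of the $P$-classes.
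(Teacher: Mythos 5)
The paper does not prove this lemma at all --- it is imported verbatim as a citation to [RZ2000], Lemma 4.11 --- so there is no internal proof to compare against; what you have written is a self-contained derivation, and it is correct. The two pillars of your argument both hold in this setting: the single-letter identity $\mathcal{V}^{K_p} = \mathcal{V}^K \cap \mathcal{V}^{T_p}$ follows exactly as you say from $K_p = K \cap T_p$ together with the fact that the $K$-, $T_p$- and $K_p$-classes are intervals and that meet in $\mathcal{L}(\mathcal{CR})$ is intersection; and the reverse inclusion in the inductive step is legitimately rescued by the fact that $T_{\ell}$ and $T_r$ are complete congruences on $\mathcal{L}(\mathcal{CR})$ (stated in Section 2 of the paper, via [Pa]), so that meeting the relations $\mathcal{V}^{\sigma}\; T_p\; (\mathcal{V}^{\sigma})^{T_p}$ and $\mathcal{V}^K\; T_p\; \mathcal{V}^K$ is permitted. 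Your identification of that meet-compatibility step as the only point where order-preservation alone would not suffice is accurate: without it one only gets $\mathcal{W}^{T_p} \subseteq (\mathcal{V}^{\sigma})^{T_p}$ and the containment $\mathcal{V}^K \cap (\mathcal{V}^{\sigma})^{T_p} \subseteq \mathcal{V}^K \cap \mathcal{W}^{T_p}$ would be out of reach. The remaining ingredients (extensivity and monotonicity of the upper operators, convexity of $K$-classes giving $\mathcal{W}^K = \mathcal{V}^K$, and the recursive definitions $\mathcal{V}^{\sigma T_p} = (\mathcal{V}^{\sigma})^{T_p}$ and $\mathcal{V}^{(\sigma T_p)(K_{\ell},K_r)} = (\mathcal{V}^{\sigma(K_{\ell},K_r)})^{K_p}$) are all as stated in Section 2, so the induction closes.
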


\section{A Representation of $[\mathcal{S}, \mathcal{CR}]$}

In this section we recast an important representation of the interval 
$[\mathcal{S}, \mathcal{L}(\mathcal{CR})]$ due to Pol\'{a}k.  The 
objectives are twofold.  The original result is cast in the language of fully invariant congruences.  So the first goal is to rephrase the result in 
the language of varieties.  The second goal is to simplify a critical condition that appears in the hypothesis.\\

Although it is more intuitive to work with varieties, to do so sometimes requires considering 
more cases.  For instance,  $[\mathcal{LNB}, L\mathcal{RRO}]$  is a $T_r$-class and therefore, 
for any $\mathcal{V} \in [\mathcal{LNB}, L\mathcal{RRO}]$, we have $\mathcal{V}_{T_r} = 
\mathcal{LNB}$.  On the other hand, the corresponding operator on congruences leads to 
the fully invariant congruence corresponding to $\mathcal{LZ}$.  This divergence only happens 
low in the interval $[\mathcal{S, CR}]$ and does not happen above $\mathcal{LRB}$.  But it does 
require careful monitoring.  A similar 
situation prevails in regard to $T_{\ell}$-classes.  It is this feature that requires us to 
give special treatment to $\mathcal{T, LZ, RZ, S, LNB, RNB}$ in the notation below.

Let
$$
\mathcal{K}_0 = \{\mathcal{V}_K \mid \mathcal{V} \in
\mathcal{L}(\mathcal{CR})\}.
$$
The best known examples of varieties in $\mathcal{K}_0$ are all varieties in $\mathcal{L}(\mathcal{G})$ 
together with the varieties in $\mathcal{L}(\mathcal{CS})\backslash 
\mathcal{L}(\mathcal{R}e\mathcal{G})$ as well as varieties of the form $H\mathcal{U}, (\mathcal{U} 
\in \mathcal{L}(\mathcal{G}))$  and $D\mathcal{U}, (\mathcal{U} 
\in \mathcal{L}(\mathcal{CS})\backslash \mathcal{L}(\mathcal{R}e\mathcal{G}))$, see Pol\'{a}k [Po2], 
Theorem 2.6. In addition, Kadourek [K] has shown that 
$\mathcal{V}^T \in \mathcal{K}_0$ for all $\mathcal{V} \in [\mathcal{S}, \mathcal{CR}]$.  Little has been 
added to this list since then. \\

We know that $K$ is a complete congruence on the lattice $\mathcal{L}(\mathcal{CR})$ so that the quotient $\mathcal{L}(\mathcal{CR})/K$ 
is also a complete lattice.  Now $\mathcal{K}_0$ consists of all the elements in $\mathcal{L}(\mathcal{CR})$ that are least in their $K$-class.  Consequently, 
$\mathcal{K}_0$ and $\mathcal{L}(\mathcal{CR})/K$ are isomorphic as {\em ordered sets}.  Therefore, $\mathcal{K}_0$ is a complete lattice with 
respect to the induced order from $\mathcal{L}(\mathcal{CR})$.
Although $\mathcal{K}_0$ is not a $\cap$-sublattice of $\mathcal{L}(\mathcal{CR})$, it is 
useful to know that $\mathcal{K}_0$ is a complete $\vee$-sublattice of
$\mathcal{L}(\mathcal{CR}).$  Let
$$
\mathbb{N}_3^* = \{T^{\ast},L^{\ast},R^{\ast}\}\, \mbox{and} \; 
 \mathbb{N}_9 = \{\{T^{\ast}, L^{\ast}, 
R^{\ast}, \mathcal{T, LZ, RZ, S, LNB, RNB}\}.
$$
We set  
$$
\mathcal{K} = \mathcal{K}_0 \cup \{\mathbb{N}_{\,3}^*\}
$$
and endow $\mathcal{K}$ with the order inherited from $\mathcal{L}
(\mathcal{CR})$ in $\mathcal{K}_0$ together with
$$
T^{\ast} < L^{\ast} < \mathcal{V}\,, \quad T^{\ast} < R^{\ast} < \mathcal{V} \qquad \mbox{for
all $\mathcal{V} \in \mathcal{K}_0.$}
$$
\begin{lemma} {\rm (Pol\'{a}k [Po1], [Po2])}
$\mathcal{K}$ is a complete lattice.
\end{lemma}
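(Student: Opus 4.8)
The plan is to verify that every subset of $\mathcal{K}$ has a least upper bound; by the standard lattice-theoretic principle that a poset in which all suprema exist is automatically a complete lattice (the infimum of a set $S$ being recovered as the supremum of the set of its lower bounds), this suffices. I would work with joins rather than meets on purpose: $\mathcal{K}_0$ is a complete lattice and a complete $\vee$-sublattice of $\mathcal{L}(\mathcal{CR})$, whereas it is not closed under meets, so suprema are the well-behaved operation here. Throughout I use that each of $T^{\ast}, L^{\ast}, R^{\ast}$ lies strictly below every member of $\mathcal{K}_0$, and that $L^{\ast}, R^{\ast}$ are incomparable while both strictly exceed $T^{\ast}$.

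For a subset $A \subseteq \mathcal{K}$ write $A_0 = A \cap \mathcal{K}_0$ and $A_{\ast} = A \setminus \mathcal{K}_0 \subseteq \{T^{\ast}, L^{\ast}, R^{\ast}\}$. First I would dispose of the case $A_0 \neq \emptyset$. Since $\mathcal{K}_0$ is complete, $V = \bigvee_{\mathcal{K}_0} A_0$ exists in $\mathcal{K}_0$; as every member of $\mathcal{K}_0$ lies above each of $T^{\ast}, L^{\ast}, R^{\ast}$, in particular $V$ dominates all of $A_{\ast}$, so $V$ is an upper bound of $A$ in $\mathcal{K}$. Conversely, no element of $\{T^{\ast}, L^{\ast}, R^{\ast}\}$ can bound the nonempty set $A_0 \subseteq \mathcal{K}_0$ from above, so every upper bound of $A$ already lies in $\mathcal{K}_0$ and hence dominates $V$. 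Thus $V = \sup_{\mathcal{K}} A$.

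The case $A_0 = \emptyset$ leaves only the finitely many subsets of $\{T^{\ast}, L^{\ast}, R^{\ast}\}$, which I would check by hand. The empty set and $\{T^{\ast}\}$ have supremum $T^{\ast}$, the least element of $\mathcal{K}$; any $A$ containing $L^{\ast}$ but not $R^{\ast}$ (together with possibly $T^{\ast}$) has supremum $L^{\ast}$, and symmetrically for $R^{\ast}$. The only substantive subcase is $\{L^{\ast}, R^{\ast}\} \subseteq A$: because $L^{\ast}$ and $R^{\ast}$ are incomparable and both strictly above $T^{\ast}$, their common upper bounds are exactly the elements of $\mathcal{K}_0$, so $\sup_{\mathcal{K}} A$ is the least element of $\mathcal{K}_0$. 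This least element exists precisely because $\mathcal{K}_0$ is a complete lattice, and by Lemma 2.1(ii) it is $\mathcal{T} = \mathcal{T}_K$, the bottom of the $K$-class $[\mathcal{T}, \mathcal{B}]$. With every case settled, $\mathcal{K}$ has all suprema and is therefore a complete lattice.

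The content of the argument is concentrated in two places, and I expect the second to be the only real obstacle: that $\mathcal{K}_0$ possesses a least element (needed to form the join $L^{\ast} \vee R^{\ast}$), and that this join is forced down onto $\mathcal{T}$ rather than onto some larger variety. Both follow once completeness of $\mathcal{K}_0$ and the identification $\mathcal{T}K = [\mathcal{T}, \mathcal{B}]$ are in hand; everything else is the routine bookkeeping of a poset obtained by adjoining a three-element bottom configuration below an existing complete lattice. Phrasing the whole verification through suprema is what keeps this bookkeeping clean, since the corresponding meet computation would have to contend with $\mathcal{K}_0$ failing to be a $\cap$-sublattice of $\mathcal{L}(\mathcal{CR})$.
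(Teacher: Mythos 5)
Your argument is correct. Note first that the paper itself supplies no proof of this lemma; it is simply attributed to Pol\'{a}k ([Po1], [Po2]), so there is no internal argument to compare against. Your verification is a sound, self-contained substitute: reducing completeness to the existence of all suprema is the standard device, and working with joins is exactly the right choice here since $\mathcal{K}_0$ is complete in the induced order (being order-isomorphic to $\mathcal{L}(\mathcal{CR})/K$) while failing to be closed under meets in $\mathcal{L}(\mathcal{CR})$. The case split on whether $A\cap\mathcal{K}_0$ is empty is clean; in the nonempty case the key observation that every upper bound of $A$ must already lie in $\mathcal{K}_0$ (since $T^{\ast},L^{\ast},R^{\ast}$ sit strictly below all of $\mathcal{K}_0$) correctly forces $\sup_{\mathcal{K}}A=\bigvee_{\mathcal{K}_0}(A\cap\mathcal{K}_0)$, and in the residual finite case the only point of substance is $L^{\ast}\vee R^{\ast}$, which you correctly identify as the least element of $\mathcal{K}_0$, namely $\mathcal{T}=\mathcal{T}_K$ by Lemma 2.1(ii). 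The one thing I would tighten is the phrasing ``this least element exists precisely because $\mathcal{K}_0$ is a complete lattice'': you do not need completeness of $\mathcal{K}_0$ for this step, since $\mathcal{T}$ is the least element of all of $\mathcal{L}(\mathcal{CR})$ and lies in $\mathcal{K}_0$ directly; but this is a matter of economy, not correctness.
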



\begin{notation}\label{not:XIII.3.4}
For $\mathcal{V} \in \mathcal{L}(\mathcal{CR}) \cup \mathbb{N}_{\,3}^*$
we define

\begin{minipage}[t]{7.75cm}
$$
\mathcal{V}_{K^*} = \left\{ \begin{array}{lll}
\mathcal{V}_K & \mbox{if} & \;\mathcal{V} \in \mathcal{L}(\mathcal{CR})
\backslash \mathbb{N}_9 \\[0.3cm]
L^{\ast} & \mbox{if} & \;\mathcal{V} \in \{L^{\ast}, \mathcal{LZ}, \mathcal{LNB}\}
\\[0.3cm]
T^{\ast} & \mbox{if} & \;\mathcal{V} \in \{T^{\ast}, \mathcal{T}, \mathcal{S}\}
\\[0.3cm]
R^{\ast} & \mbox{if} & \;\mathcal{V} \in \{R^{\ast}, \mathcal{RZ}, \mathcal{RNB}\}\,
\end{array} \right.
$$
\end{minipage} \ \
\begin{minipage}[t]{7.5cm}
$$
\;\mathcal{V}^{K^*} = \left\{ \begin{array}{lll}
\mathcal{V}^K  & \mbox{if} & \;\mathcal{V} \in
\mathcal{L}(\mathcal{CR}) \qquad\quad\;\, \\[0.3cm]
\mathcal{LNB} & \mbox{if} & \;\mathcal{V} = L^{\ast} \\[0.3cm]
\mathcal{S} & \mbox{if} & \;\mathcal{V} = T^{\ast} \\[0.3cm]
\mathcal{RNB} & \mbox{if} & \;\mathcal{V} = R^{\ast}\,.
\end{array} \right.
$$
\end{minipage}
\vskip0.5cm
\noindent
For any $\mathcal{V} \in \mathcal{L}(\mathcal{CR}),$ respectively
$\mathcal{V} \in \mathcal{L} (\mathcal{CR}) \cup \mathbb{N}_{\,3}^*,$
and $\tau \in \Theta^1,$ we write
$\mathcal{V}_{\tau K^*} = \big(\mathcal{V}_{\tau}\big)_{K^*},$ respectively
$\mathcal{V}^{\tau K^*} = \big(\mathcal{V}^{\tau}\big)^{K^*}.$
\end{notation}


Let $\mathbb{N} = \{1, 2, 3, \ldots\}$, the set of all positive integers and let $\Lambda = (\{0, 1\} \times \mathbb{N})\cup \{(0,0) = (1, 0)\}.$  Define a partial order on $\Lambda$ as follows:
$$
(i, m) < (j, n) \Longleftrightarrow m > n.
$$

\begin{notation}
Let $\Phi_1$ denote the set of all mappings $\varphi : \, \Lambda 
\longrightarrow \mathcal{K}$ satisfying the following conditions:
\begin{enumerate}
\item[{\rm (Q1)}] $(0, 0)\varphi \in \mathcal{K}_0\vspace*{0.05cm},$
\item[{\rm (Q2)}] $\varphi$ is order preserving\vspace*{0.05cm},
\item[{\rm (Q3)}] $(1, m) \varphi \neq L^{\ast},$\vspace*{0.05cm},
\item[{\rm (Q4)}] $(0, m) \varphi \neq R^{\ast},$ then\vspace*{0.05cm},
\item[{\rm (Q5)}] if $(i, m) \in \Lambda, (i, m)\varphi \in \mathcal{K}_0, k \geq 1, j\in \{0, 1\}, 
i + j \equiv k\;\mbox{mod}\; 2$ and $\tau\in \Theta$ is the unique element of length $k$ 
ending in $T_r$ if $j = 0$ and ending in $T_{\ell}$ if $j = 1$, then 
$((i, m)\varphi)_{\tau K^{\ast}} \leq (j, m + k)\varphi$.   
\end{enumerate}
\end{notation}

\begin{theorem}  
$\Phi_1$ is a complete sublattice of $\mathcal{K}^{\Lambda}.$
\end{theorem}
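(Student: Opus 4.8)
The plan is to exploit that $\mathcal{K}^{\Lambda}$ is a complete lattice under the pointwise order, in which meets and joins are formed coordinatewise, and then to show that $\Phi_1$ is closed under these operations. The empty join of $\mathcal{K}^{\Lambda}$ is the constant map $T^{\ast}$, which violates (Q1), so I read the statement as closure under all \emph{nonempty} meets and joins (equivalently, $\Phi_1$ is a complete lattice whose operations are inherited from $\mathcal{K}^{\Lambda}$). Accordingly, fix a nonempty family $\{\varphi_a\}_{a\in A}\subseteq\Phi_1$, put $\psi=\bigvee_a\varphi_a$ and $\chi=\bigwedge_a\varphi_a$ (both coordinatewise), and verify (Q1)--(Q5) for each. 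Three facts about $\mathcal{K}$ will be used throughout: each of $T^{\ast},L^{\ast},R^{\ast}$ lies strictly below every member of $\mathcal{K}_0$; the least member of $\mathcal{K}_0$ is $\mathcal{T}=\mathcal{T}_K$, so $\mathcal{K}_0$ is closed under both the joins (already noted) and the meets formed in $\mathcal{K}$; and in $\mathcal{K}$ one has $L^{\ast}\vee R^{\ast}=\mathcal{T}$, while the only elements below $L^{\ast}$ (resp. $R^{\ast}$) are $T^{\ast}$ and $L^{\ast}$ (resp. $T^{\ast}$ and $R^{\ast}$).

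Conditions (Q1)--(Q4) are then routine. For (Q1), every $(0,0)\varphi_a\in\mathcal{K}_0$, and $\mathcal{K}_0$ is closed under the meets and joins of $\mathcal{K}$, so $(0,0)\psi$ and $(0,0)\chi$ lie in $\mathcal{K}_0$. For (Q2), a pointwise join or meet of order-preserving maps is order-preserving. For (Q3), if $(1,m)\psi=L^{\ast}$ then each $(1,m)\varphi_a\le L^{\ast}$ lies in $\{T^{\ast},L^{\ast}\}$ and, by (Q3) for $\varphi_a$, equals $T^{\ast}$, forcing the join to be $T^{\ast}$; if $(1,m)\chi=L^{\ast}$ then each $(1,m)\varphi_a\ge L^{\ast}$, hence (not being $L^{\ast}$) lies in $\mathcal{K}_0$, so the meet lies in $\mathcal{K}_0$ --- both contradictions. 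Condition (Q4) is symmetric, with $R^{\ast}$ replacing $L^{\ast}$.

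The content is (Q5), and the meet is the easy half. Suppose $(i,m)\chi\in\mathcal{K}_0$. Since $(i,m)\chi\le(i,m)\varphi_a$ and no element lying below $\mathcal{K}_0$ can dominate a member of $\mathcal{K}_0$, each $(i,m)\varphi_a$ itself lies in $\mathcal{K}_0$; hence (Q5) applies to $\varphi_a$, giving $((i,m)\varphi_a)_{\tau K^{\ast}}\le(j,m+k)\varphi_a$. As $(\cdot)_{\tau K^{\ast}}$ is order-preserving and $(i,m)\chi\le(i,m)\varphi_a$, it follows that $((i,m)\chi)_{\tau K^{\ast}}\le((i,m)\varphi_a)_{\tau K^{\ast}}\le(j,m+k)\varphi_a$ for every $a$, and taking the meet over $a$ yields $((i,m)\chi)_{\tau K^{\ast}}\le(j,m+k)\chi$. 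This half needs only order-preservation of the operators.

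The join is the hard half and carries the main obstacle. Suppose $(i,m)\psi\in\mathcal{K}_0$ and set $A_0=\{a:(i,m)\varphi_a\in\mathcal{K}_0\}$. If $m=0$ then (Q1) already gives $A_0=A$; if $m\ge1$, then by (Q3) (for $i=1$) or (Q4) (for $i=0$) the values outside $\mathcal{K}_0$ lie in the chain $\{T^{\ast},R^{\ast}\}$, resp. $\{T^{\ast},L^{\ast}\}$, whose join stays below $\mathcal{K}_0$, so the premise $(i,m)\psi\in\mathcal{K}_0$ forces $A_0\ne\emptyset$. With $A_0\ne\emptyset$, the join $\bigvee_{A_0}(i,m)\varphi_a$ lies in $\mathcal{K}_0$ and dominates the remaining formal values, whence $(i,m)\psi=\bigvee_{A_0}(i,m)\varphi_a$. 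Applying (Q5) to each $\varphi_a$ with $a\in A_0$ gives $((i,m)\varphi_a)_{\tau K^{\ast}}\le(j,m+k)\varphi_a$, so the proof reduces to the distributivity $\big(\bigvee_{A_0}(i,m)\varphi_a\big)_{\tau K^{\ast}}=\bigvee_{A_0}((i,m)\varphi_a)_{\tau K^{\ast}}$; granting it, $((i,m)\psi)_{\tau K^{\ast}}\le\bigvee_{A_0}(j,m+k)\varphi_a\le(j,m+k)\psi$. The hard part is exactly this distributivity, i.e. that $\mathcal{V}\mapsto\mathcal{V}_{\tau K^{\ast}}$ preserves arbitrary joins. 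I would split it as a composite: $\mathcal{V}\mapsto\mathcal{V}_{\tau}$ is a composite of the lower operators of $T_{\ell}$ and $T_r$, each of which preserves all joins because $T_{\ell},T_r$ are complete congruences on $\mathcal{L}(\mathcal{CR})$ (a complete congruence forces its least-element operator to be a complete $\vee$-homomorphism); and the remaining factor $(\cdot)_{K^{\ast}}$ must be shown to preserve joins, which reduces to the join-preservation of $(\cdot)_K$ plus a finite check on the six exceptional varieties $\mathcal{T},\mathcal{S},\mathcal{LZ},\mathcal{LNB},\mathcal{RZ},\mathcal{RNB}$, matching the joins $L^{\ast}\vee R^{\ast}=\mathcal{T}$ and $T^{\ast}\vee L^{\ast}=L^{\ast}$ in $\mathcal{K}$ against the band-variety joins $\mathcal{LZ}\vee\mathcal{RZ}=\mathcal{RB}$, $\mathcal{S}\vee\mathcal{LZ}=\mathcal{LNB}$, and so on. Confirming that $T^{\ast},L^{\ast},R^{\ast}$ were inserted into $\mathcal{K}$ precisely so that $(\cdot)_{K^{\ast}}$ remains a $\vee$-homomorphism is the delicate point on which the whole argument rests.
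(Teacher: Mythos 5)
Your argument is sound in outline, but it is worth saying up front that the paper does not prove this statement at all: its ``proof'' is the single line ``See ([Po2], Lemma 3.4),'' so you are supplying a direct verification where the author defers entirely to Pol\'{a}k. Your verification is organized correctly: (Q1)--(Q4) really are routine once one observes that $T^{\ast},L^{\ast},R^{\ast}$ sit strictly below all of $\mathcal{K}_0$ and that $\mathcal{T}=\mathcal{T}_K$ is the least member of $\mathcal{K}_0$ (which makes the meet in $\mathcal{K}$ of a family from $\mathcal{K}_0$ coincide with the meet in the complete lattice $\mathcal{K}_0\cong\mathcal{L}(\mathcal{CR})/K$, hence land in $\mathcal{K}_0$); and you correctly isolate (Q5) for joins as the only nontrivial closure condition. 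Two remarks on that last step. First, you actually need only the inequality $\bigl(\bigvee_{A_0}V_a\bigr)_{\tau K^{\ast}}\leq\bigvee_{A_0}(V_a)_{\tau K^{\ast}}$, not full distributivity, and for the $T_{\ell}$-, $T_r$- and $K$-lower operators this one direction is exactly what the complete-congruence property delivers ($V_a\,\theta\,(V_a)_\theta$ for all $a$ forces $\bigvee V_a\,\theta\,\bigvee(V_a)_\theta$, so $\bigvee(V_a)_\theta$ dominates the least element of the class of $\bigvee V_a$); the reverse inequality, which you do not need, comes from order-preservation. You should also note that the join over $A_0$ taken in $\mathcal{K}$ agrees with the join in $\mathcal{L}(\mathcal{CR})$, which is where these operators live --- this is covered by the paper's assertion that $\mathcal{K}_0$ is a complete $\vee$-sublattice of $\mathcal{L}(\mathcal{CR})$. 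Second, the residual finite check on $(\cdot)_{K^{\ast}}$ at the six exceptional band varieties is correctly identified as the delicate point but is only asserted, not performed; it does go through (e.g. $(\mathcal{LZ}\vee\mathcal{RZ})_{K^{\ast}}=\mathcal{RB}_K=\mathcal{T}=L^{\ast}\vee R^{\ast}$ and $(\mathcal{S}\vee\mathcal{LZ})_{K^{\ast}}=\mathcal{LNB}_{K^{\ast}}=L^{\ast}=T^{\ast}\vee L^{\ast}$, with all families of exceptional varieties having join inside $\mathcal{L}(\mathcal{NB})$ so that the $K$-lower image never exceeds $\mathcal{T}$), and carrying out those few lines is precisely the content that the citation to [Po2] hides. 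So: your route is a legitimate self-contained replacement for the paper's citation, at the cost of having to write out that finite table.
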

\begin{proof}
See  ([Po2], Lemma 3.4).
\end{proof}
\vskip0.1cm
The following definitions are the varietal equivalent of certain operators on fully invariant 
congruences introduced by Pol\'{a}k ([Po2], page 258].  Let $\mathcal{V} \in \mathcal{L}(\mathcal{CR})$.  
Then

\parbox{6cm}{
\[\mathcal{V}_0 = \left\{ \begin{array}{lll}
\mathcal{V}_{T_r}  & \mbox{if}\; \mathcal{LRB} \subseteq \mathcal{V}\\
\mathcal{LZ}         & \mbox{if}\; \mathcal{V} \in [\mathcal{LZ}, L\mathcal{RRO}]\\
\mathcal{T}           & \mbox{if}\; \mathcal{V} \in [\mathcal{T}, \mathcal{RRO}]
\end{array}
\right.  \]
} \  ;  \
\parbox{6cm}{
\[\mathcal{V}_1 = \left\{ \begin{array}{lll}
\mathcal{V}_{T_{\ell}}  & \mbox{if}\; \mathcal{RRB} \subseteq \mathcal{V}\\
\mathcal{RZ}         & \mbox{if}\; \mathcal{V} \in [\mathcal{RZ}, L\mathcal{LRO}]\\
\mathcal{T}           & \mbox{if}\; \mathcal{V} \in [\mathcal{T}, \mathcal{LRO}].
\end{array}
\right.  \]
}
\vspace{0.5cm}

Somewhat more simply, we define
$$
\mathcal{V}^0 = \mathcal{V}^{T_r};\;\;\mathcal{V}^1 = \mathcal{V}^{T_{\ell}}.
$$
\noindent
The lower operators $\mathcal{V} \longrightarrow \mathcal{V}_{(i, m)} ((i, m) \in \Lambda)$ are then defined 
inductively by 
$$
\mathcal{V}_{(0,0)} = \mathcal{V}_{(1,0)} = \mathcal{V};\;\;\mathcal{V}_{(i,m)} = 
(\mathcal{V}_{(1-i, m-1)})_i\;(\mbox{ if} \; m \geq 1),
$$
while the upper operators $\mathcal{V} \longrightarrow \mathcal{V}^{(i, m)} ((i, m) \in \Lambda)$ are 
defined inductively by 
$$
\mathcal{V}^{(0,0)} = \mathcal{V}^{(1,0)} = \mathcal{V};\;\;\mathcal{V}^{(i,m)} = 
(\mathcal{V}^i)^{(1-i, m-1)}) \;(\mbox{ if} \; m \geq 1).
$$
\begin{theorem}{\rm ([Po2], Theorem 3.6)}  For
every $\mathcal{V} \in [\mathcal{S}, \mathcal{CR}],$ define a function
$\pi_{_{\scriptstyle \mathcal{V}}}$ as follows:  for all $(i,m) \in \Lambda$


\[
\pi_{_{\scriptstyle\mathcal{V}}}: \, (i,m) \longrightarrow \left\{ \begin{array}{ll} 
\mathcal{V}_K &\;\mbox{if $m = 0$}\\
\mathcal{V}_{(i,m) K^{\ast}}& \; 
\mbox{otherwise}.
\end{array}
\right.\]
The mappings
$$
\pi\!: \,\mathcal{V} \longrightarrow \pi_{_{\scriptstyle\mathcal{V}}}\,,
\quad\; \psi\!: \,\varphi \longrightarrow \bigcap\limits_{(i,m) \in \Lambda}
((i,m) \varphi )^{K^*(i,m)}
$$
are mutually inverse ismorphisms between the lattices $[\mathcal{S},
\mathcal{CR}]$ and $\Phi_1 .$
\end{theorem}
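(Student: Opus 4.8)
The plan is to verify, in order, that $\pi_{\mathcal{V}}$ always lands in $\Phi_1$, that $\psi(\varphi)$ always lands in $[\mathcal{S},\mathcal{CR}]$, that $\psi\circ\pi$ and $\pi\circ\psi$ are each the identity, and finally that both maps are order preserving; since an order isomorphism between complete lattices automatically preserves all existing joins and meets, Theorem 3.4 (which supplies the lattice structure of $\Phi_1$) then upgrades this to a complete lattice isomorphism. The entire computation is driven by Lemma 2.1, Lemma 2.2, the identity $\mathcal{B}_{\tau}=\mathcal{B}$ from (2), and the order preservation of all the lower and upper operators.

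First I would check $\pi_{\mathcal{V}}\in\Phi_1$. Condition (Q1) is immediate, since $(0,0)\pi_{\mathcal{V}}=\mathcal{V}_K\in\mathcal{K}_0$ by definition of $\mathcal{K}_0$. For (Q2) it suffices, given the reversed order on $\Lambda$, to show that increasing $m$ shrinks the value, which follows by induction on $m$ from the inductive definition of $\mathcal{V}_{(i,m)}$ together with the fact that $(\cdot)_0,(\cdot)_1,(\cdot)_{K^{\ast}}$ are order preserving and deflationary on the relevant interval. Conditions (Q3) and (Q4) are the ``sidedness'' constraints, which I would settle by a parity induction: the last operator used in building $\mathcal{V}_{(1,m)}$ is the left operator $(\cdot)_1$, so $\mathcal{V}_{(1,m)}$ is a $T_{\ell}$-bottom and hence $(\mathcal{V}_{(1,m)})_{K^{\ast}}\neq L^{\ast}$ by the case analysis of Notation 3.2, and symmetrically $(\mathcal{V}_{(0,m)})_{K^{\ast}}\neq R^{\ast}$. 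The substance is (Q5). Under the parity hypothesis $i+j\equiv k\pmod{2}$ I would first unwind the recursion to identify $\mathcal{V}_{(j,m+k)}$ as the image of $\mathcal{V}_{(i,m)}$ under applying, one level at a time, the sequence of operators recorded by $\tau$; granting their order preservation and the inequality $\mathcal{W}_{K^{\ast}}\leq\mathcal{W}$, the required bound $((i,m)\varphi)_{\tau K^{\ast}}\leq(j,m+k)\varphi$ then follows by monotonicity in the generic regime. The genuine difficulty is that the Pol\'{a}k operators $(\cdot)_0,(\cdot)_1$ diverge from the plain trace operators $(\cdot)_{T_r},(\cdot)_{T_{\ell}}$ low in $[\mathcal{S},\mathcal{CR}]$, so I would dispatch those finitely many low cases by the direct case analysis built into the definitions of $\mathcal{V}_0,\mathcal{V}_1$ and $K^{\ast}$.

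Next, $\psi$ is well defined because $\psi(\varphi)$ is an intersection of varieties of the form $\mathcal{W}^{K^{\ast}(i,m)}$, hence a variety, and because every such term contains $\mathcal{S}$: the upper trace operators are inflationary, so each term contains $((i,m)\varphi)^{K^{\ast}}$, and inspection of the four cases of Notation 3.2 shows that $((i,m)\varphi)^{K^{\ast}}$ equals $\mathcal{S}$, $\mathcal{LNB}$, $\mathcal{RNB}$, or $\mathcal{W}^{K}$ for some $\mathcal{W}\in\mathcal{K}_0$, each of which contains $\mathcal{S}$. Then I would prove the two composition identities. For $\psi\circ\pi=\mathrm{id}$ the goal is the representation $\mathcal{V}=\bigcap_{(i,m)\in\Lambda}(\mathcal{V}_{(i,m)K^{\ast}})^{K^{\ast}(i,m)}$; the inclusion $\supseteq$ is clear, since each factor contains $\mathcal{V}$ (lowering then raising a trace returns a larger variety, and $K^{\ast}$ only enlarges), while for $\subseteq$ I would use Lemma 2.2 to split each $K^{\ast}$-decorated upper operator into a pure trace-upper image intersected with a kernel-upper image, reducing the faithfulness to the separately known trace representation and the kernel facts of Lemma 2.1, telescoped by induction on $m$. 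For $\pi\circ\psi=\mathrm{id}$ the conditions (Q1)--(Q5) are exactly what is needed: they force $(\psi(\varphi))_{(i,m)K^{\ast}}=(i,m)\varphi$ coordinatewise, which recovers $\varphi$ from $\psi(\varphi)$.

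Finally, $\pi$ and $\psi$ are order preserving since all the lower operators, upper operators, and arbitrary intersections are monotone; being mutually inverse order-preserving bijections between $[\mathcal{S},\mathcal{CR}]$ and the complete lattice $\Phi_1$, they are complete lattice isomorphisms. The hard part is the inclusion $\subseteq$ in $\psi\circ\pi=\mathrm{id}$: this is the genuine faithfulness of Pol\'{a}k's representation, namely that $\mathcal{V}$ is completely recovered from the countable family of its trace--kernel bottoms, and it is precisely here that the delicate low-interval bookkeeping encoded in $K^{\ast}$ and $\mathbb{N}_{3}^{\ast}$ is indispensable, since the varietal operators degenerate near $\mathcal{S}$ and must be corrected to match their congruence-level counterparts. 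For this reason the cleanest rigorous route is to transport Pol\'{a}k's congruence version through the antiisomorphism between the fully invariant congruences on $CR_X$ and $\mathcal{L}(\mathcal{CR})$ recalled in Section~2, checking only that $\mathcal{V}_{(i,m)}$, $\mathcal{V}^{(i,m)}$ and $\mathcal{V}_{K^{\ast}}$ correspond to Pol\'{a}k's operators up to exactly the flagged exceptions.
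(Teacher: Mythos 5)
The paper does not prove this theorem at all: it is quoted verbatim from Pol\'{a}k ([Po2], Theorem 3.6), with the preceding definitions explicitly introduced as ``the varietal equivalent of certain operators on fully invariant congruences introduced by Pol\'{a}k.'' Your closing paragraph --- transport the congruence-level theorem through the antiisomorphism between fully invariant congruences on $CR_X$ and $\mathcal{L}(\mathcal{CR})$, checking that $\mathcal{V}_{(i,m)}$, $\mathcal{V}^{(i,m)}$ and $(\cdot)_{K^{\ast}}$ match Pol\'{a}k's operators up to the flagged low-interval exceptions --- is therefore exactly the paper's (implicit) justification, and as a route it is sound.

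The direct verification sketched in the body of your proposal, however, has real gaps at precisely the two points that carry the content of the theorem. First, the inclusion $\mathcal{V}\subseteq\bigcap_{(i,m)}(\mathcal{V}_{(i,m)K^{\ast}})^{K^{\ast}(i,m)}$ cannot be ``reduced to the separately known trace representation'' via Lemmas 2.1 and 2.2: no such trace representation is established in this paper, and the assertion that a variety is recovered from the countable family of its iterated trace/kernel bottoms is the theorem itself --- it depends on Pol\'{a}k's analysis of fully invariant congruences on $CR_X$ (ultimately on the structure of the free completely regular semigroup), not on formal monotonicity properties of the operators. Second, $\pi\circ\psi=\mathrm{id}$ is the surjectivity of $\pi$: given an abstract $\varphi\in\Phi_1$ one must show that the variety $\psi(\varphi)$ has ladder exactly $\varphi$, and the claim that ``(Q1)--(Q5) force $(\psi(\varphi))_{(i,m)K^{\ast}}=(i,m)\varphi$ coordinatewise'' is an assertion of the conclusion rather than an argument; conditions (Q1)--(Q5) are necessary constraints satisfied by every $\pi_{\mathcal{V}}$, but proving they are sufficient is the other half of Pol\'{a}k's work. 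The routine parts of your sketch (well-definedness of $\pi_{\mathcal{V}}$ and $\psi(\varphi)$, the easy inclusion $\supseteq$, order preservation, and the upgrade from order isomorphism to complete lattice isomorphism) are fine, but on their own they do not close the argument; you should either cite [Po2] for the two hard steps, as the paper does, or carry out the transport argument of your last paragraph in detail.
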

\vspace{0.5cm}

\section{An Alternative Representation}

Up to this point, we have simply presented the results of Pol\'{a}k in 
the language of varieties with very minor modifications.  See also [Pa] for a similar 
treatment.   We now wish to replace $\Lambda$ 
by $\Theta^1$ and simplify the condition (Q5).\\

It is a simple exercise to see that $\Lambda$ and $\Theta^1$ are actually 
isomorphic under the mapping $\gamma$ defined  on $\Theta^1$ as follows: for $\tau \in \Theta^1$
$$
\gamma:\;\tau \longrightarrow \left\{ \begin{array}{ll}
(0,0) = (1,0) & \quad \mbox{if}\;\; \tau = \emptyset\,,
\\[0.2cm]
(0, m) & \quad \mbox{if}\;\; t(\tau) = T_r, |\tau| = m \geq 1\,,
\\[0.2cm]
(1, m) & \quad \mbox{if}\;\; t(\tau) = T_{\ell}, |\tau| = m \geq 1.
\end{array} \right.
$$

 In plain words, $\tau\gamma = (i, n)$ where $i$ determines $t(\tau)$ and $n$ determines 
the length of $\tau$.  
Since $T_{\ell}$ and $T_r$ alternate in the word $\tau$, the pair $(i,m)$ uniquely determines $\tau$, 
from back to front, so to speak.

\begin{lemma}
Let $\mathcal{V} \in [\mathcal{S, CR}], \tau\in \Theta^1$ and $\tau\gamma = (i, m)$. \\
{\rm (i)} $\mathcal{V}^{\overline{\tau}} = \mathcal{V}^{(i, m)}$.\\
{\rm (ii)} $\mathcal{V}_{\tau} \notin \{\mathcal{S, LNB, RNB}\} \Longrightarrow \mathcal{V}_{\tau} = 
\mathcal{V}_{(i, m)}$.
\end{lemma}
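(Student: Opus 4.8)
The plan is to prove both parts by induction on $m = |\tau|$, peeling off one letter at a time and comparing the elementary operators that result. The same piece of bookkeeping drives both inductions: if $\tau = \sigma T_p$ with $p \in \{\ell, r\}$, then $\sigma$ ends in the opposite letter, so writing $\tau\gamma = (i,m)$ forces $\sigma\gamma = (1-i, m-1)$, and the index $i$ is exactly the one encoding $t(\tau) = T_p$ (thus $i = 0$ corresponds to $p = r$ and $i = 1$ to $p = \ell$, matching $\mathcal{V}^0 = \mathcal{V}^{T_r}$, $\mathcal{V}^1 = \mathcal{V}^{T_\ell}$ and similarly for the lower $0,1$-operators). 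I will also use freely that all operators in sight are order preserving and that, by associativity of operator composition, the defining right recursion $\mathcal{V}^{\sigma T_p} = (\mathcal{V}^{\sigma})^{T_p}$ is accompanied by the left recursion $\mathcal{V}^{T_p \rho} = (\mathcal{V}^{T_p})^{\rho}$, and likewise for the lower operators.

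Part (i) is the easy case, because the upper operators carry no special cases. The base $m = 0$ is $\mathcal{V}^{\emptyset} = \mathcal{V} = \mathcal{V}^{(0,0)}$. For the step, write $\tau = \sigma T_p$, so $\overline{\tau} = T_p\,\overline{\sigma}$; peeling the first letter gives $\mathcal{V}^{\overline{\tau}} = (\mathcal{V}^{T_p})^{\overline{\sigma}}$. Since $\sigma\gamma = (1-i,m-1)$, the induction hypothesis applied to the variety $\mathcal{V}^{T_p}$ yields $(\mathcal{V}^{T_p})^{\overline{\sigma}} = (\mathcal{V}^{T_p})^{(1-i,m-1)}$, and as $\mathcal{V}^{T_p} = \mathcal{V}^i$ this equals $(\mathcal{V}^i)^{(1-i,m-1)} = \mathcal{V}^{(i,m)}$ by the inductive definition of the upper $(i,m)$-operator. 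This closes (i).

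For part (ii) the one new ingredient is an elementary comparison of ${}_i$ with the lower word operator ${}_{T_p}$, which I state as (E): for $\mathcal{W} \in [\mathcal{S},\mathcal{CR}]$ one has $\mathcal{W}_0 = \mathcal{W}_{T_r}$ unless $\mathcal{W}_{T_r} \in \{\mathcal{S}, \mathcal{LNB}\}$, and dually $\mathcal{W}_1 = \mathcal{W}_{T_\ell}$ unless $\mathcal{W}_{T_\ell} \in \{\mathcal{S}, \mathcal{RNB}\}$. This is read directly off the piecewise definitions of $\mathcal{V}_0,\mathcal{V}_1$: the only intervals on which the definition overrides $\mathcal{V}_{T_r}$ are $[\mathcal{LZ}, L\mathcal{RRO}]$ and $[\mathcal{T}, \mathcal{RRO}]$, whose $T_r$-class bottoms are $\mathcal{LNB}$ and $\mathcal{S}$ by the left--right duals $[\mathcal{LNB}, L\mathcal{RRO}]$ and $[\mathcal{S}, \mathcal{RRO}]$ of the $T_\ell$-classes in Lemma 2.1(iii), and symmetrically for ${}_1$. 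Writing $\mathcal{E} = \{\mathcal{S}, \mathcal{LNB}, \mathcal{RNB}\}$, I will alongside (E) record that $\mathcal{E}$ is absorbing under both lower operators, i.e. $\mathcal{W} \in \mathcal{E} \Longrightarrow \mathcal{W}_{T_r}, \mathcal{W}_{T_\ell} \in \mathcal{E}$, a finite check from $\mathcal{S}_{T_r} = \mathcal{S}$, $\mathcal{LNB}_{T_r} = \mathcal{LNB}$, $\mathcal{RNB}_{T_r} = \mathcal{S}$ and their duals. The induction then runs parallel to (i): for $\tau = \sigma T_p$ assume $\mathcal{V}_\tau \notin \mathcal{E}$. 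Absorption forces $\mathcal{V}_\sigma \notin \mathcal{E}$, since $\mathcal{V}_\sigma \in \mathcal{E}$ would give $\mathcal{V}_\tau = (\mathcal{V}_\sigma)_{T_p} \in \mathcal{E}$; hence the induction hypothesis (applicable as the lower operators preserve containment of $\mathcal{S}$, keeping $\mathcal{V}_\sigma \in [\mathcal{S},\mathcal{CR}]$) gives $\mathcal{V}_\sigma = \mathcal{V}_{(1-i,m-1)}$. Applying (E) to $\mathcal{W} = \mathcal{V}_\sigma$ with $i$ matched to $p$, the equality $(\mathcal{V}_\sigma)_{T_p} = (\mathcal{V}_\sigma)_i$ holds because its potential exceptional value $(\mathcal{V}_\sigma)_{T_p} = \mathcal{V}_\tau$ lies outside $\mathcal{E}$. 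Combining, $\mathcal{V}_\tau = (\mathcal{V}_\sigma)_{T_p} = (\mathcal{V}_\sigma)_i = (\mathcal{V}_{(1-i,m-1)})_i = \mathcal{V}_{(i,m)}$.

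The main obstacle is not the induction but the low-end bookkeeping (E) together with the absorption of $\mathcal{E}$: one must pin down exactly where the varietal operator $\mathcal{V}_{T_r}$ and Pol\'ak's congruence-corrected operator $\mathcal{V}_0$ diverge near the bottom of $[\mathcal{S},\mathcal{CR}]$ — precisely the $\mathcal{LZ}$-versus-$\mathcal{LNB}$ phenomenon flagged in the discussion preceding the statement. Once these divergences are confirmed to be confined to, and absorbed by, $\{\mathcal{S},\mathcal{LNB},\mathcal{RNB}\}$, the hypothesis $\mathcal{V}_\tau \notin \mathcal{E}$ exactly licenses each replacement of ${}_{T_p}$ by ${}_i$ along the telescoping composition, and the asymmetry between (i) and (ii) (mirror image versus none) is explained simply by the fact that the upper $(i,m)$-operator recurses from the innermost letter while the lower one recurses from the outermost.
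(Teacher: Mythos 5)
Your proof is correct and takes essentially the same route as the paper: the paper's entire proof of this lemma is the single sentence that the result ``follows by straightforward induction from the definitions of $\mathcal{V}_0, \mathcal{V}_1$, $\mathcal{V}^0, \mathcal{V}^1$,'' and your argument is exactly that induction written out in full. Your observation (E) --- locating the divergence of $(\ast)_{T_r}$ from $(\ast)_0$ at the $T_r$-class bottoms $\mathcal{S}$ and $\mathcal{LNB}$ (dually for $(\ast)_1$), together with the absorption of $\{\mathcal{S},\mathcal{LNB},\mathcal{RNB}\}$ under the lower operators --- is precisely the bookkeeping the paper leaves implicit.
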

\begin{proof}
This follows by straightforward induction from the definitions of $\mathcal{V}_0, 
\mathcal{V}_1$, $\mathcal{V}^0, \mathcal{V}^1$.
\end{proof}

\begin{notation} 
Let $\Phi$ denote the set of all mappings $\varphi : \,\Theta^1
\longrightarrow \mathcal{K}$ satisfying the following conditions:
\begin{enumerate}
\item[{\rm (P1)}] $\emptyset \varphi \in \mathcal{K}_0\vspace*{0.05cm},$
\item[{\rm (P2)}] $\varphi$ is order preserving\vspace*{0.05cm},
\item[{\rm (P3)}] if \,$\tau \in \Theta$ and $\tau \varphi = L^{\ast},$ then
$t(\tau ) = T_r\vspace*{0.05cm},$
\item[{\rm (P4)}] if \,$\tau \in \Theta$ and $\tau \varphi = R^{\ast},$ then
$t(\tau ) = T_{\ell}\vspace*{0.05cm},$
\item[{\rm (P5)}] if \,$\tau \in \Theta,$ then $(\emptyset
\varphi)_{\tau K^*} \leq \tau \varphi \vspace*{0.05cm},$
\item[{\rm (P6)}] if \,$\sigma ,\tau \in \Theta$ are such that $\sigma
\varphi \in \mathcal{K}_0$ and $t(\sigma ) \neq h(\tau ),$ then
$(\sigma \varphi )_{\tau K^*} \subseteq (\sigma \tau ) \varphi .$
\end{enumerate}
\end{notation}
\vskip0.2cm

\begin{proposition}
The mapping $\eta:\; \varphi \longrightarrow \gamma\varphi\; (\varphi \in \Phi_1)$ is an order isomorphism 
of $\Phi_1$ onto $\Phi$ with inverse $\eta^{-1}: \theta \longrightarrow \gamma^{-1}\theta\;(\theta 
\in \Phi)$.
\end{proposition}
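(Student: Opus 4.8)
The plan is to push the whole statement back onto the order isomorphism $\gamma\colon \Theta^1\to\Lambda$ already recorded, and then to check that the six axioms defining $\Phi$ are nothing but the $\gamma$-translates of the five axioms defining $\Phi_1$. First I would dispose of the ``formal'' half. Since $\gamma$ is a bijection, precomposition $\varphi\mapsto\gamma\varphi$ is a bijection of $\mathcal{K}^{\Lambda}$ onto $\mathcal{K}^{\Theta^1}$ with inverse $\theta\mapsto\gamma^{-1}\theta$; writing $\psi=\gamma\varphi$ one has the dictionary $\tau\psi=(\tau\gamma)\varphi$, and $\gamma^{-1}(\gamma\varphi)=\varphi$, $\gamma(\gamma^{-1}\theta)=\theta$ follow at once, so $\eta$ and $\eta^{-1}$ are mutually inverse. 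Because the orders on both function lattices are pointwise and $\gamma$ is onto, this bijection both preserves and reflects order, so $\eta$ is already an order isomorphism of the ambient lattices $\mathcal{K}^{\Lambda}\cong\mathcal{K}^{\Theta^1}$. Everything then comes down to showing that $\eta$ carries $\Phi_1$ onto $\Phi$, that is, that $\varphi\in\Phi_1$ if and only if $\gamma\varphi\in\Phi$.

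Using the dictionary together with the explicit description of $\gamma$, the matching of the easy axioms is immediate. Conditions (Q1) and (P1) both assert $\emptyset\psi=(0,0)\varphi\in\mathcal{K}_0$; (Q2) and (P2) coincide because $\gamma$ is an order isomorphism; and reading off that $\tau\gamma=(1,m)$ means $t(\tau)=T_{\ell}$ while $\tau\gamma=(0,m)$ means $t(\tau)=T_r$, we see that ``$(1,m)\varphi\neq L^{\ast}$ for all $m$'' is precisely the contrapositive of ``$\tau\psi=L^{\ast}\Rightarrow t(\tau)=T_r$'', so (Q3) is (P3) and dually (Q4) is (P4).

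The substance is the equivalence of (Q5) with the pair (P5), (P6). Here I would set $\sigma=\gamma^{-1}(i,m)$, so that $t(\sigma)$ is $T_r$ or $T_{\ell}$ according as $i=0$ or $i=1$ and $|\sigma|=m$, and take $\tau$ to be the length-$k$ word prescribed in (Q5), so that $\tau\gamma=(j,k)$. The key computation is that the parity hypothesis $i+j\equiv k\pmod 2$ is exactly the non-cancellation condition $t(\sigma)\neq h(\tau)$: coding $T_r,T_{\ell}$ by $0,1$, one has $t(\sigma)\equiv i$ and, since a word of length $k$ with tail coded $j$ alternates back to a head coded $j+k+1$, one has $h(\tau)\equiv j+k+1\pmod 2$; hence $t(\sigma)\neq h(\tau)\Leftrightarrow i\not\equiv j+k+1\Leftrightarrow i+j\equiv k$. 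Under this hypothesis the product $\sigma\tau$ in $\Theta$ is the plain concatenation, of length $m+k$ and ending in $t(\tau)$, so $(\sigma\tau)\gamma=(j,m+k)$. Thus for $m\geq 1$ (i.e.\ $\sigma\in\Theta$) the instance of (Q5) reads $(\sigma\psi)_{\tau K^{\ast}}\leq(\sigma\tau)\psi$, which is exactly (P6), and the correspondence between (Q5)-instances and (P6)-instances is bijective.

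Finally I would treat the case $m=0$, where $\sigma=\emptyset$ and $\sigma\tau=\tau$ for every $\tau$. The crucial point is that the single element $(0,0)=(1,0)$ admits both presentations $i=0$ and $i=1$, and the two resulting parity conditions $j\equiv k$ and $j\equiv k+1\pmod 2$ together range over all $(j,k)$, hence over all $\tau\in\Theta$; since $(0,0)\varphi\in\mathcal{K}_0$ by (Q1), the totality of $m=0$ instances of (Q5) is precisely (P5). Collecting the equivalences (Q1)$\Leftrightarrow$(P1) through (Q5)$\Leftrightarrow$(P5)\,\&\,(P6) yields $\varphi\in\Phi_1\Leftrightarrow\gamma\varphi\in\Phi$, and combined with the first paragraph this completes the proof. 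I expect the only genuinely delicate points to be the parity bookkeeping identifying $i+j\equiv k$ with the non-cancellation condition $t(\sigma)\neq h(\tau)$, and the double-presentation trick needed to recover the unrestricted (P5) from the degenerate point $(0,0)=(1,0)$.
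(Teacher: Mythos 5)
Your proposal is correct, and in substance it verifies the same axiom-by-axiom correspondence that the paper does; the difference is purely in how the hard step is organized. The paper proves the two inclusions $\Phi_1\eta\subseteq\Phi$ and $\Phi\eta^{-1}\subseteq\Phi_1$ separately, and in each direction splits the verification of (Q5) versus (P5)--(P6) into four cases according to the values of $i,j\in\{0,1\}$, rederiving the relation between the parity of $k$ and the condition $t(\sigma)\neq h(\tau)$ inside each case. You instead isolate that relation once, as the single identity $t(\sigma)\neq h(\tau)\Longleftrightarrow i+j\equiv k\pmod 2$ (via the coding $T_r\mapsto 0$, $T_\ell\mapsto 1$ and $h(\tau)\equiv j+k+1$), which collapses the eightfold case analysis into one uniform computation and makes the correspondence between (Q5)-instances with $m\geq 1$ and (P6)-instances visibly bijective. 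Your handling of $m=0$ via the double presentation $(0,0)=(1,0)$ — whose two parity classes of $i$ together sweep out all $(j,k)$ and hence all $\tau\in\Theta$, recovering the unrestricted (P5) — is exactly what the paper does implicitly when it chooses $i=0$ or $i=1$ according to the parity of $k$, but you make explicit why (Q1) is needed to license these instances. What your organization buys is a proof of the biconditional $\varphi\in\Phi_1\Leftrightarrow\gamma\varphi\in\Phi$ in one pass, plus a clean upfront disposal of the order-isomorphism and mutual-inverse claims from the general fact about precomposition by a bijection, which the paper leaves to an ``easy to check'' remark at the end. Both arguments are sound; yours would be the shorter one to write out in full.
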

One way of interpreting this proposition intuitively would be to say that if we identify $\Lambda$ and $\Theta^1$ via the isomorphism $\eta$, 
then $\Phi_1 = \Phi$.

\begin{proof}
Let $\varphi \in \Phi_1$.  The first goal is to show that $\gamma\varphi \in \Phi$.  To begin 
with we have $\emptyset\gamma\varphi = (0,0)\varphi \in \mathcal{K}_0$ so 
that $\gamma\varphi$ satisfies (P1).  Let $\sigma, \tau \in \Theta^1, \sigma \leq \tau$.  Since 
$\gamma$ and $\varphi$ are order preserving, $\sigma\gamma \leq \tau\gamma$ and $\sigma\gamma\varphi \leq 
\tau\gamma\varphi$.  Thus $\gamma\varphi$ is order preserving, that is, satisfies (P2).  Now suppose that 
$\tau \in \Theta$ and $\tau\gamma\varphi = L^{\ast}$.  By (Q3), this implies that $\tau\gamma = (0, |\tau|)$ and 
therefore, by the definition of $\gamma$, we have $t(\tau) = T_r$.  Hence, $\gamma\varphi$ satisfies (P3).  That 
$\gamma\varphi$ satisfies (P4) follows by duality. \\

Now consider (P5).  Let $\tau \in \Theta$ and $|\tau| = k \geq 1$.  Without loss of generality, 
we may assume 
that $t(\tau) = T_r$ so that $\tau\gamma = (0,k)$.  First suppose that $k$ is even.  
Then, with $i = j = m = 0$, we can invoke (Q5), 
to obtain
$$
(\emptyset\gamma\varphi)_{\tau K^{\ast}}  = ((0, 0)\varphi)_{\tau K^{\ast}} \leq (0, k)\varphi 
 = \tau\gamma\varphi.
$$
On the other hand, if $k$ is odd, then we may take $i = 1, m = 0, j = 0$ and again invoke (Q5) to 
obtain
$$
(\emptyset\gamma\varphi)_{\tau K^{\ast}}  = ((1, 0)\varphi)_{\tau K^{\ast}} \leq (0, k)\varphi 
 = \tau\gamma\varphi.
$$
Thus $\varphi$ satisfies (P5).  \\

Finally, (P6).  Let $\sigma ,\tau \in \Theta$ be such that $\sigma\gamma\varphi 
\in \mathcal{K}_0$ and $t(\sigma ) \neq h(\tau )$.  This implies that $|\sigma\tau| = |\sigma| 
+ |\tau|$.  
Let $|\sigma| = m, |\tau| = k$.  Then 
$m, k \geq 1$.  Let $\sigma\gamma = (i, m), \tau\gamma = (j, k)$.  We break the argument into 
four cases determined by the parity of $i, j$, that is, the values of $i, j \in \{0, 1\}$.\\

Case: $i = 0, j = 0$.  Then $t(\sigma) = t(\tau) = t(\sigma\tau) = T_r$.  Hence 
$\sigma\tau\gamma = (0, m + k)$.  Since
$T_r = t(\sigma) \neq h(\tau)$, it follows 
that $h(\tau) = T_{\ell}$. Consequently, $k$ must be even and $i + j \equiv k$.  We can now 
invoke $(Q5)$ to conclude that
$$
(\sigma\gamma\varphi)_{\tau K^{\ast}} = ((i, m)\varphi)_{\tau K^{\ast}} \leq (j, m + k)\varphi = 
(0, m + k)\varphi = (\sigma\tau)\gamma\varphi
$$
and (P6) is satisfied.\\

Case: $i = 0, j = 1$.  Then $t(\sigma) = T_r,  t(\tau) = t(\sigma\tau) = T_{\ell}$.  Hence 
$\sigma\tau\gamma = (1, m + k)$.  Since $T_r = t(\sigma) \neq h(\tau)$, it follows that 
$h(\tau) = T_{\ell}$ and therefore that $k$ is odd.  Consequently, $i + j \equiv k$ and 
we may invoke (Q5):
$$
(\sigma\gamma\varphi)_{\tau K^{\ast}} = ((i, m)\varphi)_{\tau K^{\ast}} \leq (j, m + k)\varphi = 
(\sigma\tau)\gamma\varphi
$$
Case: $i = 1, j = 0$.  Then $t(\sigma) = T_{\ell},  t(\tau) = t(\sigma\tau) = T_r$.  Hence 
$\sigma\tau\gamma = (0, m + k)$.  Since $T_{\ell} = t(\sigma) \neq h(\tau)$, it follows that 
$h(\tau) = T_r$ and therefore that $k$ is odd.  Consequently, $i + j \equiv k$ and 
we may invoke $(Q5)$:
$$
(\sigma\gamma\varphi)_{\tau K^{\ast}} = ((i, m)\varphi)_{\tau K^{\ast}} \leq (j, m + k)\varphi = 
(\sigma\tau)\gamma\varphi
$$
Case: $i = 1, j = 1$.  Then $t(\sigma) = t(\tau) = t(\sigma\tau) = T_{\ell}$. Hence 
$\sigma\tau\gamma = (1, m + k)$.  Since $T_{\ell} = t(\sigma) \neq h(\tau)$, it follows that 
$h(\tau) = T_r$ and therefore that $k$ is even.  Consequently, $i + j \equiv k$ and 
we may invoke (Q5):
$$
(\sigma\gamma\varphi)_{\tau K^{\ast}} = ((i, m)\varphi)_{\tau K^{\ast}} \leq (j, m + k)\varphi = 
(\sigma\tau)\gamma\varphi
$$
We have now shown that $\gamma\varphi$ satisfies the conditions (P1) - (P6) and therefore 
that $\gamma\varphi \in \Phi$.\\

Now for the converse.  Let $\theta\in \Phi$ and consider $\gamma^{-1}\theta$.    
Then $(0,0)\gamma^{-1}\theta = \emptyset\theta\in \mathcal{K}_0$ so that $\gamma^{-1}\theta$ 
satisfies (Q1).   Since $\gamma$ is an order isomorphism, so also is $\gamma^{-1}$ from which it 
immediately follows that $\gamma^{-1}\theta$ is order preserving and satisfies (Q2).  Now 
let $(1,m)\in \Lambda$.  If $m= 0$, then $(1, 0)\gamma^{-1}\theta = \emptyset\theta \in \mathcal{K}_0$ 
and therefore does not equal $L^{\ast}$.  If $m \geq 1$, then $t((1, m)\gamma^{-1}) = T_{\ell}$ and, 
by (P3), we have 
$(1, m)\gamma^{-1}\theta \neq L^{\ast}$.
Thus $\gamma^{-1}\theta$ satisfies (Q3) and, by duality, 
(Q4).\\

Now consider the condition (Q5).  Let $(i,m) \in \Lambda, 
(i, m)\gamma^{-1}\theta \in \mathcal{K}_0, k \geq 1, j\in \{0, 1\}$ and $i + j \equiv k$.  
Let $\sigma = (i, m)\gamma^{-1}$.  Then $|\sigma| = m$.   \\

Case: $m = 0$.  Then $\sigma = \emptyset$.  Let $\tau = (j, m + k)\gamma^{-1} = (j, k)\gamma^{-1}$.  Since 
$k \geq 1$, we have $\tau \in \Theta$.  Then, by (P5), 
$$
((i, m)\gamma^{-1}\theta)_{\tau K^{\ast}}= (\sigma\theta)_{\tau K^{\ast}} = (\emptyset\theta)_{\tau K^{\ast}} \leq \tau\theta 
=(j, m + k)\gamma^{-1}\theta
$$
as required.  \\

Case: $m \geq 1$.  Let $\tau = (j, k)\gamma^{-1}$.  \\

Subcase: $i = 0 = j $.  Then $t(\sigma) = T_r$.  Hence,
$$
i =j = 0 \Longrightarrow k \equiv 0, t(\tau) = T_r \Longrightarrow h(\tau) = T_{\ell} \neq 
t(\sigma) \Longrightarrow \sigma\tau\gamma = (0, m + k) = (j, m + k).
$$
 By (P6), this implies that $(\sigma\theta)_{\tau K^{\ast}} \leq (\sigma\tau)\theta$.  Hence
$$
((i, m)\gamma^{-1}\theta)_{\tau K^{\ast}} = (\sigma\theta)_{\tau K^{\ast}} \leq (\sigma\tau)\theta 
= (j, m + k)\gamma^{-1}\theta
$$
as required.  \\

Subcase: $i = 0, j = 1$.   Then $k$ is odd so that 
\begin{eqnarray*}
i = 0, j = 1 &\Longrightarrow& t(\tau) = T_{\ell} = h(\tau) \neq T_r = t(\sigma) \Longrightarrow (\sigma\tau)\gamma 
= (1, m + k) = (j, m + k)\\ &\Longrightarrow& t(\sigma\tau)= t(\tau) = T_{\ell}.
\end{eqnarray*}
Therefore, 
$$
((i, m)\gamma^{-1}\theta)_{\tau K^{\ast}} = (\sigma\theta)_{\tau K^{\ast}} \leq (\sigma\tau)\theta =
(j, m + k)\gamma^{-1}\theta
$$
as required.\\

Subcase:$i = 1,  j = 0$.  Then
\begin{eqnarray*}
i = 1, j = 0 &\Longrightarrow& k \equiv 1, t(\sigma) = T_{\ell}, t(\tau) = T_r\\
&\Longrightarrow& h(\tau) = T_r \neq t(\sigma)\\
&\Longrightarrow& |\sigma\tau| = |\sigma| + |\tau| = m + k
\end{eqnarray*}
Also $t(\sigma\tau) = t(\tau) = T_r.$
Hence $(\sigma\tau)\gamma = (0, m + k) = (j, m + k)$.  Therefore once again
$$
((i, m)\gamma^{-1}\theta)_{\tau K^{\ast}} = (\sigma\theta)_{\tau K^{\ast}} \leq (\sigma\tau)\theta =
(j, m + k)\gamma^{-1}\theta
$$
as required.  \\

Case:$i = 1, j = 1$.  We have
\begin{eqnarray*}
i = 1 = j &\Longrightarrow& t(\sigma) = t(\tau) = T_{\ell}\\
&\Longrightarrow& k \equiv 0, h(\tau) = T_r \neq t(\sigma)\\
&\Longrightarrow& |\sigma\tau| = |\sigma| + |\tau| = m + k\\
&\Longrightarrow& t(\sigma\tau) = t(\tau) = T_{\ell}\\
&\Longrightarrow& (\sigma\tau)\gamma = (1, m + k). 
\end{eqnarray*}
Hence we once again have 
$$
((i, m)\gamma^{-1}\theta)_{\tau K^{\ast}} = (\sigma\theta)_{\tau K^{\ast}} \leq (\sigma\tau)\theta =
(j, m + k)\gamma^{-1}\theta
$$
as required and $\gamma^{-1}\theta$ satisfies (Q5).  Therefore $\gamma^{-1}\theta \in \Phi_1$.\\

Now that we know that $\eta$ maps $\Phi_1$ to $\Phi$ and that the mapping $\eta^{\ast}:
\theta \longrightarrow \gamma^{-1}\theta$ maps $\Phi$ to $\Phi_1$ it is easy to check that $\eta$ and 
$\eta^{\ast}$ are inverse isomorphisms.
\end{proof}

\begin{theorem}  
$\Phi$ is a complete sublattice of $\mathcal{K}^{\Theta^1}.$
\end{theorem}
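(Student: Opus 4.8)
The plan is to transport the complete-sublattice property across the index isomorphism $\gamma\colon \Theta^1 \to \Lambda$, using the two facts already in hand: that $\Phi_1$ is a complete sublattice of $\mathcal{K}^{\Lambda}$, and that the preceding proposition identifies $\Phi$ with the image of $\Phi_1$ under the assignment $\varphi \mapsto \gamma\varphi$. The underlying principle is simply that a bijection between the two index sets induces a complete lattice isomorphism between the corresponding power lattices, and a complete lattice isomorphism carries complete sublattices to complete sublattices.

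First I would extend the assignment appearing in the proposition to the whole power lattice. Since $\mathcal{K}$ is a complete lattice, both $\mathcal{K}^{\Lambda}$ and $\mathcal{K}^{\Theta^1}$ are complete lattices in which all meets and joins are formed pointwise. Because $\gamma$ is a bijection, the map $\bar\eta\colon \mathcal{K}^{\Lambda} \to \mathcal{K}^{\Theta^1}$ given by $\varphi \mapsto \gamma\varphi$ is a bijection, with inverse $\theta \mapsto \gamma^{-1}\theta$. I would then verify that $\bar\eta$ preserves arbitrary joins and meets: for any family $\{\varphi_a\}$ in $\mathcal{K}^{\Lambda}$ and any $\tau \in \Theta^1$,
$$
\tau\Big(\gamma\bigvee_a \varphi_a\Big) = (\tau\gamma)\Big(\bigvee_a \varphi_a\Big) = \bigvee_a (\tau\gamma)\varphi_a = \bigvee_a \tau(\gamma\varphi_a),
$$
so that $\gamma\big(\bigvee_a \varphi_a\big) = \bigvee_a (\gamma\varphi_a)$, and dually $\gamma\big(\bigwedge_a \varphi_a\big) = \bigwedge_a (\gamma\varphi_a)$, the middle equalities being exactly the fact that joins and meets in a power lattice are computed coordinatewise. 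Hence $\bar\eta$ is a complete lattice isomorphism of $\mathcal{K}^{\Lambda}$ onto $\mathcal{K}^{\Theta^1}$, and it restricts to the order isomorphism $\eta\colon \Phi_1 \to \Phi$ of the preceding proposition; in particular $\bar\eta(\Phi_1) = \Phi$.

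Finally I would deduce closure of $\Phi$ under the meets and joins of $\mathcal{K}^{\Theta^1}$. Given any family $\{\theta_a\} \subseteq \Phi$, write $\theta_a = \gamma\varphi_a$ with $\varphi_a = \gamma^{-1}\theta_a \in \Phi_1$. Because $\bar\eta$ is a complete isomorphism, the join of $\{\theta_a\}$ taken in $\mathcal{K}^{\Theta^1}$ equals $\bar\eta\big(\bigvee_a \varphi_a\big)$, where the inner join is taken in $\mathcal{K}^{\Lambda}$; since $\Phi_1$ is a complete sublattice of $\mathcal{K}^{\Lambda}$ we have $\bigvee_a \varphi_a \in \Phi_1$, whence $\bigvee_a \theta_a \in \bar\eta(\Phi_1) = \Phi$. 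The identical argument with $\bigwedge$ in place of $\bigvee$ shows $\Phi$ is closed under arbitrary meets as well, the empty join and empty meet yielding the bottom and top of $\mathcal{K}^{\Theta^1}$, which likewise lie in $\Phi$. Thus $\Phi$ is a complete sublattice of $\mathcal{K}^{\Theta^1}$.

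Since everything reduces to pointwise computation, there is no genuine obstacle here; the only points needing care are confirming that $\bar\eta$ preserves \emph{arbitrary} (not merely finite) joins and meets and reading ``complete sublattice'' as closure under all such meets and joins, including the empty ones. The real content of the correspondence $\Phi_1 \cong \Phi$ was already discharged in the preceding proposition, so this theorem is essentially a transport-of-structure formality.
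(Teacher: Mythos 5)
Your proposal is correct and follows exactly the route the paper takes: its proof of this theorem is simply ``This follows from Theorem 3.4 and Proposition 4.3,'' i.e.\ transport the complete-sublattice property of $\Phi_1 \subseteq \mathcal{K}^{\Lambda}$ across the isomorphism induced by $\gamma$. You have merely written out the pointwise-join/meet verification that the paper leaves implicit.
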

\begin{proof}
This follows from Theorem 3.4 and Proposition 4.3.
\end{proof}
\vskip0.1cm

The next theorem establishes an isomorphism between the lattices $[\mathcal{S},
\mathcal{CR}]$ and $\Phi.$  This is in terms of $\Theta^1$ and $\Phi$ rather 
than $\Lambda$ and $\Phi_1$.  This appeared without proof in [R1990].

\begin{theorem} \ For
every $\mathcal{V} \in [\mathcal{S}, \mathcal{CR}],$ define a function
$\chi_{_{\scriptstyle \mathcal{V}}}$ by
$$
\chi_{_{\scriptstyle\mathcal{V}}}: \, \tau \longrightarrow \left\{
\begin{array}{lll}
\mathcal{V}_K & \quad\mbox{if} & \;\tau = \emptyset
\\[0.2cm]
\mathcal{V}_{\tau K^*} & \quad\mbox{if} & \;\tau \in \Theta.
\end{array} \right. 
$$
The mappings
$$
\chi\!: \,\mathcal{V} \longrightarrow \chi_{_{\scriptstyle\mathcal{V}}}\,,
\quad\; \xi\!: \,\varphi \longrightarrow \bigcap\limits_{\tau \in \Theta^1}
(\tau \varphi )^{K^*\overline{\tau}}
$$
are mutually inverse ismorphisms between the lattices $[\mathcal{S},
\mathcal{CR}]$ and $\Phi .$
\end{theorem}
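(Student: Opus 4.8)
The plan is to realize $\chi$ and $\xi$ as composites of isomorphisms already in hand, so that essentially no fresh verification of the defining conditions (P1)--(P6) or of the intersection formula is needed. Theorem 3.6 supplies mutually inverse isomorphisms $\pi : [\mathcal{S},\mathcal{CR}] \to \Phi_1$ and $\psi : \Phi_1 \to [\mathcal{S},\mathcal{CR}]$, and Proposition 4.3 supplies an order isomorphism $\eta : \Phi_1 \to \Phi$ with inverse $\eta^{-1} : \theta \mapsto \gamma^{-1}\theta$; since an order isomorphism of lattices is automatically a lattice isomorphism, $\eta$ and $\eta^{-1}$ are mutually inverse isomorphisms. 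Consequently $\eta \circ \pi : [\mathcal{S},\mathcal{CR}] \to \Phi$ is a lattice isomorphism with inverse $\psi \circ \eta^{-1}$. The whole proof then reduces to the two identities $\chi = \eta\circ\pi$ and $\xi = \psi\circ\eta^{-1}$; granting these, the facts that $\chi_{\mathcal{V}} \in \Phi$, that $\xi(\varphi) \in [\mathcal{S},\mathcal{CR}]$, and that $\chi,\xi$ are mutually inverse isomorphisms all follow at once.

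To prove $\chi = \eta\circ\pi$ I would evaluate $\eta(\pi_{\mathcal{V}}) = \gamma\pi_{\mathcal{V}}$, the map $\tau \mapsto (\tau\gamma)\pi_{\mathcal{V}}$. For $\tau = \emptyset$ we have $\tau\gamma = (0,0)$, whence $(\tau\gamma)\pi_{\mathcal{V}} = \mathcal{V}_K = \chi_{\mathcal{V}}(\emptyset)$. For $\tau \in \Theta$ with $\tau\gamma = (i,m)$ (so $m \geq 1$) we get $(\tau\gamma)\pi_{\mathcal{V}} = \mathcal{V}_{(i,m)K^{\ast}} = (\mathcal{V}_{(i,m)})_{K^{\ast}}$, and it remains to match this with $\chi_{\mathcal{V}}(\tau) = (\mathcal{V}_\tau)_{K^{\ast}}$, i.e.\ to show $(\mathcal{V}_{(i,m)})_{K^{\ast}} = (\mathcal{V}_\tau)_{K^{\ast}}$. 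If $\mathcal{V}_\tau \notin \{\mathcal{S},\mathcal{LNB},\mathcal{RNB}\}$ this is immediate from Lemma 4.2(ii), which then gives $\mathcal{V}_\tau = \mathcal{V}_{(i,m)}$ outright. In the three exceptional cases one reads off from the definitions of $\mathcal{V}_0,\mathcal{V}_1$ (hence of $\mathcal{V}_{(i,m)}$) that $\mathcal{V}_\tau$ and $\mathcal{V}_{(i,m)}$ fall in the same one of the blocks $\{\mathcal{S},\mathcal{T}\}$, $\{\mathcal{LNB},\mathcal{LZ}\}$, $\{\mathcal{RNB},\mathcal{RZ}\}$; since $K^{\ast}$ is constant on each such block (sending them to $T^{\ast},L^{\ast},R^{\ast}$ respectively), the two $K^{\ast}$-images again agree.

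To prove $\xi = \psi\circ\eta^{-1}$ I would start from $\psi(\eta^{-1}(\varphi)) = \psi(\gamma^{-1}\varphi) = \bigcap_{(i,m)\in\Lambda} (((i,m)\gamma^{-1})\varphi)^{K^{\ast}(i,m)}$ and reindex by $\tau = (i,m)\gamma^{-1}$, which runs over $\Theta^1$ as $(i,m)$ runs over $\Lambda$, giving $\bigcap_{\tau\in\Theta^1}(\tau\varphi)^{K^{\ast}(\tau\gamma)}$. Comparing with $\xi(\varphi) = \bigcap_{\tau\in\Theta^1}(\tau\varphi)^{K^{\ast}\overline{\tau}}$, it suffices to show, for each $\tau$ with $\tau\gamma=(i,m)$, that $\big((\tau\varphi)^{K^{\ast}}\big)^{(i,m)} = \big((\tau\varphi)^{K^{\ast}}\big)^{\overline{\tau}}$. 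This is exactly Lemma 4.2(i) applied to $\mathcal{W} = (\tau\varphi)^{K^{\ast}}$, provided $\mathcal{W} \in [\mathcal{S},\mathcal{CR}]$. That proviso holds because each value $\tau\varphi$ lies in $\mathcal{K}_0 \cup \{T^{\ast},L^{\ast},R^{\ast}\}$: if $\tau\varphi \in \{T^{\ast},L^{\ast},R^{\ast}\}$ then $\mathcal{W} \in \{\mathcal{S},\mathcal{LNB},\mathcal{RNB}\} \subseteq [\mathcal{S},\mathcal{CR}]$, whereas if $\tau\varphi \in \mathcal{K}_0$ then $\mathcal{W} = (\tau\varphi)^K \supseteq \mathcal{T}^K = \mathcal{B} \supseteq \mathcal{S}$ by Lemma 2.1(ii) together with order-preservation of the upper operator.

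The routine parts are the parity bookkeeping already encapsulated in $\gamma$ and Lemma 4.2 and the reindexing of the intersection; the one place that genuinely needs care is the exceptional-case analysis in the second paragraph, where $\mathcal{V}_\tau$ lands in $\{\mathcal{S},\mathcal{LNB},\mathcal{RNB}\}$ and Lemma 4.2(ii) is silent. The crux is that $K^{\ast}$ was defined precisely so as to collapse each discrepancy ($\mathcal{S}$ versus $\mathcal{T}$, $\mathcal{LNB}$ versus $\mathcal{LZ}$, $\mathcal{RNB}$ versus $\mathcal{RZ}$) to a common value in $\mathbb{N}_3^{\ast}$; checking that $\mathcal{V}\mapsto\mathcal{V}_\tau$ and the Pol\'{a}k operator $\mathcal{V}\mapsto\mathcal{V}_{(i,m)}$ land on matching sides of these three pairs is the only genuinely case-dependent step, and it is exactly what forces the special treatment of $\mathcal{T},\mathcal{LZ},\mathcal{RZ},\mathcal{S},\mathcal{LNB},\mathcal{RNB}$ flagged at the start of Section 4.
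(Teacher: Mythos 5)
Your proposal is correct and follows essentially the same route as the paper: both reduce the theorem to the identities $\chi=\pi\eta$ and $\xi=\eta^{-1}\pi^{-1}$, verify the first via Lemma 4.1(ii) together with a case analysis of where $\mathcal{V}_\tau$ lands in $\{\mathcal{S},\mathcal{LNB},\mathcal{RNB}\}$, and verify the second by reindexing the intersection over $\Lambda$ by $\Theta^1$ and invoking Lemma 4.1(i). Your ``same block'' observation for the exceptional cases is a compressed but accurate account of the paper's explicit step-by-step induction on the tail of $\tau$ past the longest initial segment $\sigma$ with $\mathcal{V}_\sigma\notin\{\mathcal{S},\mathcal{LNB},\mathcal{RNB}\}$, and your explicit check that $\mathcal{S}\subseteq(\tau\varphi)^{K^*}$ before applying Lemma 4.1(i) is a small point the paper leaves tacit.
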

\begin{proof}
Since, by Theorem 3.5,   $\pi: [\mathcal{S}, \mathcal{CR}]\longrightarrow \Phi_1$ 
is an isomorphism and, by Proposition 4.3,  
$\eta:\Phi_1 \longrightarrow \Phi$ is also an  
isomorphism, it follows that $\pi\eta:[\mathcal{S}, \mathcal{CR}] \longrightarrow \Phi$ is 
also an isomorphism with inverse $(\pi\eta)^{-1} = \eta^{-1}\pi^{-1}$.  So it will suffice to show 
that $\chi = \pi\eta$ and that $\xi = \eta^{-1}\pi^{-1}$.\\

Let $\mathcal{V} \in [\mathcal{S, CR}]$ and $\tau \in \Theta^1$.  Then, $(\mathcal{V})\pi\eta = 
\pi_{\mathcal{V}}\eta = \gamma\pi_{\mathcal{V}}$ so that, with $\tau = \emptyset$, 
$$
\emptyset(\mathcal{V})\pi\eta = \emptyset\gamma\pi_{\mathcal{V}} = (0,0)\pi_{\mathcal{V}} = 
\mathcal{V}_K = \emptyset\chi_{_{\mathcal{V}}}.
$$
Now let $|\tau| = 1$, say $\tau = T_r$.  From the definition of $\mathcal{V}_0$, and the assumption that 
$\mathcal{V} \in [\mathcal{S}, \mathcal{CR}]$, we have
\[\mathcal{V}_{0K^{\ast}} = \left\{ \begin{array}{lll}
\mathcal{V}_{T_rK}  & \mbox{if}\; \mathcal{LRB} \subseteq \mathcal{V},\\
\mathcal{LZ}_{K^{\ast}}         & \mbox{if}\; \mathcal{V} \in [\mathcal{LNB}, L\mathcal{RRO}],\\
\mathcal{T}_{K^{\ast}}           & \mbox{if}\; \mathcal{V} \in [\mathcal{S}, \mathcal{RRO}],
\end{array}
\right.  \]

\[ = \left\{ \begin{array}{lll}
\mathcal{V}_{T_rK}  & \mbox{if}\; \mathcal{LRB} \subseteq \mathcal{V},\\
L^{\ast}         & \mbox{if}\; \mathcal{V} \in [\mathcal{LNB}, L\mathcal{RRO}],\\
T^{\ast}           & \mbox{if}\; \mathcal{V} \in [\mathcal{S}, \mathcal{RRO}].
\end{array}
\right.  \]
On the other hand

\[\mathcal{V}_{T_rK^{\ast}} = \left\{ \begin{array}{lll}
\mathcal{V}_{T_rK^{\ast}}  & \mbox{if}\; \mathcal{LRB} \subseteq \mathcal{V},\\
\mathcal{LNB}_{K^{\ast}}         & \mbox{if}\; \mathcal{V} \in [\mathcal{LNB}, L\mathcal{RRO}],\\
\mathcal{S}_{K^{\ast}}           & \mbox{if}\; \mathcal{V} \in [\mathcal{S}, \mathcal{RRO}],
\end{array}
\right.  \]

\[ = \left\{ \begin{array}{lll}
\mathcal{V}_{T_rK}  & \mbox{if}\; \mathcal{LRB} \subseteq \mathcal{V},\\
L^{\ast}         & \mbox{if}\; \mathcal{V} \in [\mathcal{LNB}, L\mathcal{RRO}],\\
T^{\ast}           & \mbox{if}\; \mathcal{V} \in [\mathcal{S}, \mathcal{RRO}].
\end{array}
\right.  \]
Thus $\mathcal{V}_{0K^{\ast}} = \mathcal{V}_{T_rK^{\ast}}$ and dually, we have 
$\mathcal{V}_{1K^{\ast}} = \mathcal{V}_{T_{\ell}K^{\ast}}$.   Now consider $\tau \in 
\Theta, |\tau| \geq 2$.  If $\mathcal{V}_{\tau} \notin \{\mathcal{S, LNB, RNB}\}$, then by 
Lemma 4.1 we have $\mathcal{V}_{\tau} = \mathcal{V}_{(i, m)}$ and therefore 
$\mathcal{V}_{\tau K^{\ast}} = \mathcal{V}_{(i, m)K^{\ast}}$.  So, let us now assume that 
$\mathcal{V}_{\tau} \in \{\mathcal{S, LNB, RNB}\}$.  Let $\sigma \in \Theta^1$ be the longest initial 
segment of $\tau$ such that $\mathcal{V}_{\sigma} \notin \{\mathcal{S, LNB, RNB}\}$ and 
$\rho \in \Theta$ be such that $\tau = \sigma\rho$.  First assume that $\rho = T_r$.  Since 
$\mathcal{V}_{\sigma T_r} \in \{\mathcal{S, LNB, RNB}\}$ and $\mathcal{V}_{\sigma T_r}$ is 
the least element in its $T_r$-class, we must have $\mathcal{V}_{\sigma T_r} \in 
\{\mathcal{S, LNB}\}$. \\

Case: $\mathcal{V} _{\sigma T_r} = \mathcal{LNB}$.   By Lemma 4.1,  $\mathcal{V}_{\sigma} = 
\mathcal{V}_{(\sigma)\gamma}$.  Since $\mathcal{LNB}$ is the least element 
in its $T_r$-class $[\mathcal{LNB}, L\mathcal{RRO}]$, we must have $\mathcal{V}_{\sigma} = 
\mathcal{V}_{(\sigma)\gamma} \in [\mathcal{LNB}, L\mathcal{RRO}]$.  Hence, 
$(\mathcal{V}_{(\sigma)\gamma})_{(0,1)} =  (\mathcal{V}_{(\sigma)\gamma})_0 = \mathcal{LZ}$ 
so that 
$$
\mathcal{V}_{\sigma T_r K^{\ast}} = \mathcal{LNB}_{K^{\ast}} = L^{\ast} = \mathcal{LZ}_{K^{\ast}} 
= (\mathcal{V}_{(\sigma)\gamma})_{(0,1) K^{\ast}} = 
(\mathcal{V}_{(\sigma T_r)\gamma})_{K^{\ast}}. 
$$
Going one step further, we obtain
$$
\mathcal{V}_{\sigma T_rT_{\ell}} = \mathcal{S}\;\mbox{while}\; 
\mathcal{V}_{(\sigma T_rT_{\ell})\gamma} = (\mathcal{V}_{(\sigma)\gamma})_{(0,1)(1,1)} 
= \mathcal{LZ}_{(1,1)} = \mathcal{LZ}_1 =  \mathcal{T}.
$$
Consequently,
$$
\mathcal{V}_{\sigma T_rT_{\ell}K^{\ast}} = \mathcal{S}_{K^{\ast}} = T^{\ast} = \mathcal{T}_{K^{\ast}} 
= \mathcal{V}_{(\sigma T_rT_{\ell})\gamma K^{\ast}}.
$$
Continuing in this way, we see that for any $\rho \in \Theta$ with $|\rho| \geq 2$ we have 
$$
\mathcal{V}_{\sigma\rho K^{\ast}} = T^{\ast} 
= \mathcal{V}_{(\sigma\rho)\gamma K^{\ast}}.
$$
Thus for all $\tau \in \Theta^1$ we have that $\mathcal{V}_{\tau K^{\ast}}  
= \mathcal{V}_{(\tau)\gamma K^{\ast}}$.  Equivalently, we have now established that 
for $\tau\gamma = (i, m)$ we will have $\mathcal{V}_{\tau K^{\ast}} 
= \mathcal{V}_{(i, m)K^{\ast}}$.   Thus, for all $\tau \in \Theta$, we have
$$
\tau((\mathcal{V})\pi\eta) = ((\tau)\gamma)(\mathcal{V})\pi = (i, m)\pi_{\mathcal{V}} 
= \mathcal{V}_{(i, m)K^{\ast}} = \mathcal{V}_{\tau K^{\ast}} = \tau\chi_{_{\mathcal{V}}}
$$
while we have already seen that $(\emptyset)(\mathcal{V})\pi\eta = \emptyset\chi_{_{\mathcal{V}}}.$  
Therefore we have $(\mathcal{V})\pi\eta = \chi_{_{\mathcal{V}}} = (\mathcal{V})\chi$.  Consequently, 
$\pi\eta = \chi$ as required.\\

Now consider $\eta^{-1}\pi^{-1}$.  By Theorem 3.5, $\eta^{-1}\pi^{-1} = \eta^{-1}\psi$ 
where $\psi$ is as defined in Theorem 3.5.  
Let $\varphi \in \Phi$.  Then
\begin{eqnarray*}
\varphi(\eta^{-1}\pi^{-1}) &=& \varphi(\eta^{-1}\psi)\\
&=&  (\varphi\eta^{-1})\psi\\
&=& \bigcap\limits_{(i,m) \in \Lambda}((i,m) \varphi\eta^{-1} )^{K^*(i,m)}\\
&=& \bigcap\limits_{(i,m) \in \Lambda}(((i,m)\gamma^{-1}) \varphi )^{K^*(i,m)}\\
&=& \bigcap\limits_{(i,m) \in \Lambda}((\tau) \varphi )^{K^*(i,m)}\;\;\mbox{where $\tau\gamma = (i, m)$}\\
&=& \bigcap\limits_{\tau\in \Theta^1}((\tau) \varphi )^{K^*(i,m)}\;\;\mbox{since $\gamma$ is an 
isomorphism}\\
&=& \bigcap\limits_{\tau\in \Theta^1}((\tau) \varphi )^{K^*\overline{\tau}}\;\mbox{by Lemma 4.1}\\
&=&  \varphi\xi.
\end{eqnarray*}
Thus $\xi = \eta^{-1}\pi^{-1} = \chi^{-1}$, as required. 
\end{proof}

\noindent
\begin{notation}  {\rm (i)}  Throughout the remainder of this article, the symbols $\chi, \xi$ will have the 
meanings assigned to them in Theorem 4.5.\\
{\rm (ii)}  For any subset $\mathcal{P} \subseteq [\mathcal{S, CR}]$ we will write 
$$
\Phi_{\mathcal{P}} = \mathcal{P}\chi = \{\chi_{_{\mathcal{V}}}|\mathcal{V} \in \mathcal{P}\}
$$
\end{notation}

For $\varphi \in \Phi$, the definition of $\varphi \xi $ is, perhaps, a bit more meaningful  when like 
elements are grouped together, as in:
\begin{eqnarray}
\varphi \xi &=& (\emptyset \varphi )^K \cap \bigcap\limits_{\tau \varphi
\in \mathcal{K}_0} (\tau \varphi )^{K\overline{\tau}} \cap  
\bigcap\limits_{\tau \varphi = T^*}
\mathcal{S}^{\overline{\tau}}\cap
\bigcap\limits_{\tau \varphi = L^*} \mathcal{LNB}^{\overline{\tau}}
\cap \bigcap\limits_{\tau \varphi = R^*} \mathcal{RNB}^{\overline{\tau}}.
\end{eqnarray}
where terms are grouped together naturally according to the value of $\tau\varphi$.
We will refer to the expressions
$$
(\emptyset \varphi )^K, \bigcap\limits_{\tau \varphi
\in \mathcal{K}_0} (\tau \varphi )^{K\overline{\tau}}, \bigcap\limits_{\tau \varphi = T^*}
\mathcal{S}^{\overline{\tau}}, \bigcap\limits_{\tau \varphi = L^*} \mathcal{LNB}^{\overline{\tau}}, 
\bigcap\limits_{\tau \varphi = R^*} \mathcal{RNB}^{\overline{\tau}},
$$
as the {\em components} of $\varphi\xi$ and we will refer to the expression on the right hand 
side of (3) as the {\em component form} of $\varphi\xi$.   Of course, if $\varphi = \chi_{_{\mathcal{V}}}$, 
then $\mathcal{V} =  \chi_{_{\mathcal{V}}}\xi$ so that (3) gives us a description of the 
variety $\mathcal{V}$ in {\em component} form.  This will be very useful below.\\

\noindent  
Adopting the terminology of Pol\'{a}k, we will refer to the element 
$\chi_{\mathcal{V}}$ as the {\em ladder} of 
$\mathcal{V}$ and to any element of $\Phi$ as a {\em ladder}.  Of course, by Theorem 4.5, 
every ladder is the ladder of some variety in $[\mathcal{S, CR}]$.     Taking advantage of 
the representation of $\Theta$ as a partially ordered set and the fact that $\chi_{\mathcal{V}}$ is 
order-preserving, we can likewise display the values of $\chi_{\mathcal{V}}$ as the labels at 
the vertices of a ladder or network.

\[ \begin{minipage}[t]{12cm}
\beginpicture
\setcoordinatesystem units <0.8truecm,0.8truecm>
\setplotarea x from 0 to 12, y from 0 to 9
\setlinear
\plot 4.6 1  6 2.4 /
\plot 7.4 1  6 2.4 /
\plot 6 2.4  4.6 3.8 /
\plot 6 2.4  7.4 3.8 /
\plot 4.6 3.8  6 5.2 /
\plot 7.4 3.8  6 5.2 /
\plot 6 5.2  4.6 6.6 /
\plot 6 5.2  7.4 6.6 /
\plot 4.6 6.6  6 8 /
\plot 7.4 6.6  6 8 /

\plot 4.6 1  5 0.6 /
\plot 7.4 1  7 0.6 /

\put {\circle*{2.5}} at 4.65 1
\put {\circle*{2.5}} at 7.475 1
\put {\circle*{2.5}} at 6.075 2.4
\put {\circle*{2.5}} at 4.65 3.8
\put {\circle*{2.5}} at 7.475 3.8
\put {\circle*{2.5}} at 6.075 5.2
\put {\circle*{2.5}} at 4.65 6.6
\put {\circle*{2.5}} at 7.475 6.6
\put {\circle*{2.5}} at 6.075 8

\put {$T_rT_{\ell}T_r\chi_{\mathcal{V}}$} at 3.3 1
\put {$T_{\ell}T_r\chi_{\mathcal{V}}$} at 3.5 3.8
\put {$T_r\chi_{\mathcal{V}}$} at 3.8 6.6
\put {$\emptyset\chi_{\mathcal{V}}$} at 6 8.35
\put {$T_{\ell}\chi_{\mathcal{V}}$} at 8.2 6.6
\put {$T_rT_{\ell}\chi_{\mathcal{V}}$} at 8.4 3.8
\put {$T_{\ell}T_rT_{\ell}\chi_{\mathcal{V}}$} at 8.6 1
\endpicture
\end{minipage} \]

\begin{lemma}
Let $\mathcal{U, V} \in [\mathcal{S, CR}]$.\\
{\rm (i)} $\mathcal{U}\; K\; \mathcal{V} \Longleftrightarrow \emptyset\chi_{\mathcal{U}} = 
\emptyset\chi_{\mathcal{V}}$.\\
{\rm (ii)} $\mathcal{U}\; T_{\ell}\; \mathcal{V} \Longleftrightarrow \tau\chi_{\mathcal{U}} = 
\tau\chi_{\mathcal{V}}$ for all $\tau \in \Theta$ with $h(\tau) = T_{\ell}$.\\
{\rm (iii)} $\mathcal{U}\; K_{\ell}\; \mathcal{V} \Longleftrightarrow 
\emptyset\chi_{\mathcal{U}} = \emptyset\chi_{\mathcal{V}}$ and 
$\tau\chi_{\mathcal{U}} = 
\tau\chi_{\mathcal{V}}$ for all $\tau \in \Theta$ with $h(\tau) = T_{\ell}$.
\end{lemma}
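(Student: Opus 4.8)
The three facts I will lean on throughout are, for $P \in \{K, T_\ell\}$, the equivalence $\mathcal{U}\,P\,\mathcal{V} \iff \mathcal{U}_P = \mathcal{V}_P$ recorded in Section 2, together with the defining values $\emptyset\chi_{\mathcal{V}} = \mathcal{V}_K$ and $\tau\chi_{\mathcal{V}} = \mathcal{V}_{\tau K^*} = (\mathcal{V}_\tau)_{K^*}$ from Theorem 4.5 and Notation 3.4. Part (i) is then immediate: $\mathcal{U}\,K\,\mathcal{V} \iff \mathcal{U}_K = \mathcal{V}_K \iff \emptyset\chi_{\mathcal{U}} = \emptyset\chi_{\mathcal{V}}$. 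Since $K_\ell = K \cap T_\ell$ by definition, part (iii) is exactly the conjunction of (i) and (ii), so once (ii) is in hand (iii) follows at once. Thus essentially all the content lies in (ii).

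For (ii) I first record the action identity $\mathcal{V}_{T_\ell\tau'} = (\mathcal{V}_{T_\ell})_{\tau'}$, valid whenever $T_\ell\tau'$ is reduced (that is, $\tau' = \emptyset$ or $h(\tau') = T_r$); this is a one-line induction on $|\tau'|$ from the inductive definition $\mathcal{V}_{\sigma T_p} = (\mathcal{V}_\sigma)_{T_p}$. The words $\tau \in \Theta$ with $h(\tau) = T_\ell$ are precisely those of the form $T_\ell\tau'$ with $\tau'$ empty or headed by $T_r$. The forward direction is then routine: if $\mathcal{U}\,T_\ell\,\mathcal{V}$, i.e. $\mathcal{U}_{T_\ell} = \mathcal{V}_{T_\ell}$, then for such $\tau$ we get $\mathcal{U}_\tau = (\mathcal{U}_{T_\ell})_{\tau'} = (\mathcal{V}_{T_\ell})_{\tau'} = \mathcal{V}_\tau$, whence $\tau\chi_{\mathcal{U}} = (\mathcal{U}_\tau)_{K^*} = (\mathcal{V}_\tau)_{K^*} = \tau\chi_{\mathcal{V}}$.

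The converse is the heart of the matter. Assuming $\tau\chi_{\mathcal{U}} = \tau\chi_{\mathcal{V}}$ for all $\tau$ with $h(\tau) = T_\ell$, I put $\mathcal{W} = \mathcal{U}_{T_\ell}$, $\mathcal{W}' = \mathcal{V}_{T_\ell}$ and aim to show $\mathcal{W} = \mathcal{W}'$, which is $\mathcal{U}\,T_\ell\,\mathcal{V}$. First, both lie in $[\mathcal{S},\mathcal{CR}]$: the operator ${}_{T_\ell}$ is order preserving and $\mathcal{S}_{T_\ell} = \mathcal{S}$ (by Lemma 2.1(iii), $[\mathcal{S},\mathcal{LRO}]$ being a $T_\ell$-class), so $\mathcal{W} \supseteq \mathcal{S}_{T_\ell} = \mathcal{S}$, and likewise $\mathcal{W}'$. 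Hence both ladders are defined and, $\chi$ being injective (Theorem 4.5), it suffices to prove $\chi_{\mathcal{W}} = \chi_{\mathcal{W}'}$. Both ladders are $T_\ell$-reduced, since $\mathcal{W}_{T_\ell} = (\mathcal{U}_{T_\ell})_{T_\ell} = \mathcal{U}_{T_\ell} = \mathcal{W}$ (using $T_\ell T_\ell = T_\ell$); so by the action identity $\mathcal{W}_\rho = \mathcal{W}_{\rho'}$ whenever $\rho = T_\ell\rho'$, and each ladder is completely determined by its values on $\{\emptyset\}\cup\{\tau : h(\tau) = T_r\}$. Reading the hypothesis at the word $T_\ell\tau'$ yields $(\mathcal{W}_{\tau'})_{K^*} = (\mathcal{W}'_{\tau'})_{K^*}$, i.e. $\tau'\chi_{\mathcal{W}} = \tau'\chi_{\mathcal{W}'}$ for every nonempty $\tau'$ headed by $T_r$, together with $\mathcal{W}_{K^*} = \mathcal{W}'_{K^*}$ from the truncation $\tau' = \emptyset$. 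By $T_\ell$-reducedness these agreements propagate to all of $\Theta$.

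The one remaining rung — and the place where the low varieties force care — is $\emptyset$, where the ladders record $\mathcal{W}_K$ and $\mathcal{W}'_K$ but the hypothesis delivers only $\mathcal{W}_{K^*} = \mathcal{W}'_{K^*}$. Here I exploit that $\mathcal{W}, \mathcal{W}' \supseteq \mathcal{S}$, so the only members of $\mathbb{N}_9$ they can equal are $\mathcal{S}, \mathcal{LNB}, \mathcal{RNB}$, which $K^*$ carries to the three distinct symbols $T^*, L^*, R^*$, whereas any non-$\mathbb{N}_9$ value has $\mathcal{W}_{K^*} = \mathcal{W}_K \in \mathcal{K}_0$. A short trichotomy — both special (the distinctness of $T^*, L^*, R^*$ forces $\mathcal{W} = \mathcal{W}'$), both non-special (then $\mathcal{W}_K = \mathcal{W}'_K$ directly), and the mixed case impossible since $\mathbb{N}_3^* \cap \mathcal{K}_0 = \emptyset$ — upgrades $K^*$-equality to $K$-equality. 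This completes $\chi_{\mathcal{W}} = \chi_{\mathcal{W}'}$, hence $\mathcal{W} = \mathcal{W}'$ and (ii), and (iii) then follows by intersecting with (i). The main obstacle is exactly this converse, and within it the handling of the bottom rung: the argument succeeds precisely because $K^*$, unlike $K$, separates the specially treated varieties $\mathcal{S}, \mathcal{LNB}, \mathcal{RNB}$.
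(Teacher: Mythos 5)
Your proposal is correct and follows essentially the same route as the paper: both directions reduce to showing $\mathcal{U}_{T_{\ell}} = \mathcal{V}_{T_{\ell}}$ by verifying that the ladders $\chi_{\mathcal{U}_{T_{\ell}}}$ and $\chi_{\mathcal{V}_{T_{\ell}}}$ agree and invoking the injectivity of $\chi$ from Theorem 4.5, with the only delicate point being the $\emptyset$-rung where $K$ and $K^{\ast}$ can disagree. Your trichotomy on membership in $\{\mathcal{S}, \mathcal{LNB}, \mathcal{RNB}\}$ is just a mild repackaging of the paper's case split on whether $\mathcal{U}_{T_{\ell}}$ lies in $\mathcal{L}(\mathcal{B})$; both rest on the observation that $K^{\ast}$ separates the specially treated varieties and coincides with $K$ elsewhere.
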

\begin{proof}  (i)  This follows immediately from the fact that $\emptyset\chi_{_{\mathcal{V}}} = \mathcal{V}_K$ for all $\mathcal{V} \in \mathcal{L}(\mathcal{CR})$.  Part (iii) follows 
immediately from parts (i) and (ii) combined with the fact that $K_{\ell} = K\cap T_{\ell}$.  
So it remains to prove part (ii).   Let $\mathcal{U} \;T_{\ell}\; 
\mathcal{V}$ and $\tau \in \Theta$ be such that $h(\tau) = T_{\ell}$.  Then 
$\mathcal{U}_{T_{\ell}} = \mathcal{V}_{T_{\ell}}$ so that necessarily 
$\mathcal{U}_{\tau} = \mathcal{V}_{\tau}$ and therefore 
$\tau\chi_{_{\mathcal{U}}} = \tau\chi_{_{\mathcal{V}}}$.  That establishes the direct implication.  

Now assume that $\tau \chi_{_{\scriptstyle\mathcal{U}}} = \tau
\chi_{_{\scriptstyle\mathcal{V}}}$
for all $\tau \in \Theta $ with $h(\tau )=T_{\ell}.$ Then for all $\sigma
\noindent\in \Theta ,$ we get
$$
\sigma \chi_{_{\scriptstyle \mathcal{U}_{T_{\ell}}}} = \mathcal{U}_{T_{\ell}
\sigma K^*} = (T_{\ell}\sigma ) \chi_{_{\scriptstyle\mathcal{U}}} = (T_{\ell}
\sigma ) \chi_{_{\scriptstyle\mathcal{V}}} =
\mathcal{V}_{T_{\ell} \sigma K^*} = \sigma
\chi_{_{\scriptstyle\mathcal{V}_{T_{\ell}}}}\vspace*{0.1cm}\,.
$$

\noindent
In addition
$$
\mathcal{U}_{T_{\ell} K^*} = T_{\ell} \chi_{_{\scriptstyle\mathcal{U}}} = T_{\ell}
\chi_{_{\scriptstyle\mathcal{V}}}
= \mathcal{V}_{T_{\ell} K^*}\,.
$$
Hence
\begin{align*}
\mathcal{U}_{T_{\ell}} \in \mathcal{L}(\mathcal{B}) &\;\Longleftrightarrow\;
\mathcal{U}_{T_{\ell} K^*} \in \{\mathcal{T}, T^{\ast}, L^{\ast}, R^{\ast}\} \\[0.2cm]
&\;\Longleftrightarrow\; \mathcal{V}_{T_{\ell} K^*} \in \{\mathcal{T}, T^{\ast},
L^{\ast}, R^{\ast}\} \\[0.2cm]
&\;\Longleftrightarrow\; \mathcal{V}_{T_{\ell}} \in
\mathcal{L}(\mathcal{B})\,.
\end{align*}
\vskip0.1cm

\noindent
Consequently, for $\mathcal{U}_{T_{\ell}} \in \mathcal{L}(\mathcal{B}),$
we have $\mathcal{V}_{T_{\ell}} \in \mathcal{L}(\mathcal{B})$ and
$\mathcal{U}_{T_{\ell} K} = \mathcal{T} = \mathcal{V}_{T_{\ell} K}.$ Thus
$$
\emptyset \chi_{_{\scriptstyle\mathcal{U}_{T_{\ell}}}} = \mathcal{U}_{T_{\ell}K} = \mathcal{V}_{T_{\ell}K}
= \emptyset
\chi_{_{\scriptstyle\mathcal{V}_{T_{\ell}}}}
\;\quad \mbox{if} \;\quad \mathcal{U}_{T_{\ell}} \in
\mathcal{L}(\mathcal{B})\,.
$$
On the other hand,
$$
\mathcal{U}_{T_{\ell}} \not\in \mathcal{L}(\mathcal{B}) \;\Longrightarrow\;
\mathcal{V}_{T_{\ell}} \not\in \mathcal{L}(\mathcal{B})
$$
so that
$$
\emptyset \chi_{_{\scriptstyle\mathcal{U}_{T_{\ell}}}} = \mathcal{U}_{T_{\ell} K} =
\mathcal{U}_{T_{\ell} K^*} = T_r \chi_{_{\scriptstyle\mathcal{U}}} = T_{\ell}
\chi_{_{\scriptstyle\mathcal{V}}}
= \mathcal{V}_{T_{\ell} K^*} = \mathcal{V}_{T_{\ell} K} = \emptyset
\chi_{_{\scriptstyle\mathcal{V}_{T_{\ell}}}}\vspace*{0.1cm}\,.
$$
Thus, in all cases, we have $\emptyset
\chi_{_{\scriptstyle\mathcal{U}_{T_{\ell}}}} =
\emptyset \chi_{_{\scriptstyle\mathcal{V}_{T_{\ell}}}}$ so that $\tau
\chi_{_{\scriptstyle\mathcal{U}_{T_{\ell}}}} = \tau
\chi_{_{\scriptstyle\mathcal{V}_{T_{\ell}}}}$ for all
$\tau \in \Theta^1.$ Therefore, by Theorem 4.5,
$\mathcal{U}_{T_{\ell}} = \mathcal{V}_{T_{\ell}}$ so that
$\mathcal{U}\;T_{\ell}\:\mathcal{V}.$ 


\end{proof}

Lemma 4.7 could be described as a {\em the fundamental triviality} in the study of $K$-classes.  It 
is a simple consequence from Theorem 4.5, but it is fundamental to the multiple consequences of 
Theorem 4.5 concerning $K$-classes.  This is due to the fact that if we wish to know about the 
structure of the $K$-class of $\mathcal{V} \in [\mathcal{S, CR}]$ then one powerful approach is to 
investigate those $\varphi \in \Phi$ such that $\emptyset\varphi = \emptyset\chi{_{_{\mathcal{V}}}} = 
\mathcal{V}_K$.\\

A useful way to conceptualize Lemma 4.7(ii) is to see that two varieties are $T_{\ell}$ related 
if and only if the two decreasing sequences
\begin{eqnarray*}
T_{\ell}\chi_{\mathcal{U}}, &T_{\ell}T_r\chi_{\mathcal{U}},&T_{\ell}T_rT_{\ell}\chi_{\mathcal{U}}, \ldots\\
T_{\ell}\chi_{\mathcal{V}}, &T_{\ell}T_r\chi_{\mathcal{V}},&T_{\ell}T_rT_{\ell}\chi_{\mathcal{V}}, \ldots
\end{eqnarray*}
are identical.  We refer to these chains as the $T_{\ell}$-{\em chains of} $\mathcal{U}$ and $\mathcal{V}$, 
respectively.  The $T_r$-chains are defined dually.\\

\section{Bands}

The lattice $\mathcal{L}(\mathcal{CR})$  is naturally partitioned into two classes consisting of 
those that contain the variety of all bands $\mathcal{B}$ and those that don't.  This division 
is reflected in the ladder representation of a variety.

\begin{lemma}
Let $\mathcal{V} \in [\mathcal{S}, \mathcal{CR}]$ and $\mathcal{P} \in \{\mathcal{S, LNB, RNB}\}$.  \\
{\rm (i)}
\begin{eqnarray*}
\mathcal{B} \subseteq \mathcal{V} &\Longleftrightarrow& \tau\chi_{\mathcal{V}} \neq T^{\ast} 
\;\;\mbox{for all}\;\; \tau \in \Theta^1\\
&\Longleftrightarrow& \tau\chi_{\mathcal{V}} \notin \{T^{\ast}, L^{\ast}, R^{\ast}\}
\;\;\mbox{for all}\;\; \tau \in \Theta^1\\
&\Longleftrightarrow& \mathcal{V}_{\tau} \notin \{\mathcal{S, LNB, RNB}\}\;\mbox{for all}\; 
\tau \in \Theta.
\end{eqnarray*}
{\rm (ii)} Let $\tau \in \Theta$.  Then 
$$
\mathcal{V}_{\tau} \cap \mathcal{B} = \mathcal{P} \Longleftrightarrow \mathcal{V}_{\tau} = \mathcal{P}.
$$ 
\end{lemma}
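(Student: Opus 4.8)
The plan is to reduce the whole statement to two structural facts about the lower operators on $[\mathcal{S},\mathcal{CR}]$, deduce part (ii) and the routine implications of part (i) from them, and isolate the one genuinely hard implication. First I would record that $[\mathcal{S},\mathcal{CR}]$ is closed under every lower operator $\mathcal{V}\mapsto\mathcal{V}_\tau$. Since $[\mathcal{S},\mathcal{LRO}]$ is a $T_\ell$-class (Lemma 2.1(iii)) and, dually, $[\mathcal{S},\mathcal{RRO}]$ is a $T_r$-class, their least elements give $\mathcal{S}_{T_\ell}=\mathcal{S}=\mathcal{S}_{T_r}$, hence $\mathcal{S}_\tau=\mathcal{S}$ for all $\tau\in\Theta^1$ by induction on $|\tau|$. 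As the lower operators are order preserving and $\mathcal{S}\subseteq\mathcal{V}$, we get $\mathcal{S}=\mathcal{S}_\tau\subseteq\mathcal{V}_\tau$, so $\mathcal{V}_\tau\in[\mathcal{S},\mathcal{CR}]$; in particular $\mathcal{V}_\tau\notin\{\mathcal{T},\mathcal{LZ},\mathcal{RZ}\}$, none of which contains $\mathcal{S}$. Second, from the definitions of $\mathcal{V}_0,\mathcal{V}_1$ and their variety-level translation carried out in the proof of Theorem 4.5, the $T_r$-least elements of $[\mathcal{S},\mathcal{CR}]$ are exactly $\mathcal{S}$, $\mathcal{LNB}$, and the varieties containing $\mathcal{LRB}$ (dually $\mathcal{S}$, $\mathcal{RNB}$, and the varieties containing $\mathcal{RRB}$ for $T_\ell$). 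Call these facts (A) and (B).

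Next I would decode the ladder values. For $\tau\in\Theta$ we have $\tau\chi_\mathcal{V}=(\mathcal{V}_\tau)_{K^*}$, which is a starred symbol precisely when $\mathcal{V}_\tau$ lies in $\mathbb{N}_9\cap\mathcal{L}(\mathcal{CR})$; by (A) this excludes $\mathcal{T},\mathcal{LZ},\mathcal{RZ}$, so $\tau\chi_\mathcal{V}$ equals $T^*,L^*,R^*$ iff $\mathcal{V}_\tau$ equals $\mathcal{S},\mathcal{LNB},\mathcal{RNB}$ respectively, while $\emptyset\chi_\mathcal{V}=\mathcal{V}_K\in\mathcal{K}_0$ is never starred. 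This already gives the equivalence of the second and third conditions of (i). For the equivalence of the first two it suffices to show that a value $L^*$ (or $R^*$) forces a value $T^*$: if $\tau\chi_\mathcal{V}=L^*$ then $t(\tau)=T_r$ by (P3) and $\mathcal{V}_\tau=\mathcal{LNB}$, so $\mathcal{V}_{\tau T_\ell}=\mathcal{LNB}_{T_\ell}$, which by (B) (it is $T_\ell$-least and contained in $\mathcal{LNB}$) must be $\mathcal{S}$, giving $(\tau T_\ell)\chi_\mathcal{V}=T^*$; the $R^*$ case is dual.

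With (A) and (B) in hand, part (ii) is short. The implication $\Leftarrow$ is immediate since $\mathcal{P}\subseteq\mathcal{B}$. For $\Rightarrow$, $\mathcal{V}_\tau$ is a trace-least element of $[\mathcal{S},\mathcal{CR}]$, hence by (B) it is $\mathcal{S}$, $\mathcal{LNB}$, $\mathcal{RNB}$, or a variety containing $\mathcal{LRB}$ or $\mathcal{RRB}$; comparing band parts ($\mathcal{S}\cap\mathcal{B}=\mathcal{S}$, $\mathcal{LNB}\cap\mathcal{B}=\mathcal{LNB}$, $\mathcal{RNB}\cap\mathcal{B}=\mathcal{RNB}$, while a variety above $\mathcal{LRB}$ or $\mathcal{RRB}$ has band part not contained in any $\mathcal{P}$) forces $\mathcal{V}_\tau=\mathcal{P}$. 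Similarly the easy half of the first equivalence of (i): if $\mathcal{B}\subseteq\mathcal{V}$ then $\mathcal{B}=\mathcal{B}_\tau\subseteq\mathcal{V}_\tau$ (using the fact $\mathcal{B}_\tau=\mathcal{B}$), so $\mathcal{V}_\tau$ cannot be one of the proper band subvarieties $\mathcal{S},\mathcal{LNB},\mathcal{RNB}$.

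The remaining implication — if $\mathcal{B}\not\subseteq\mathcal{V}$ then some $\mathcal{V}_\tau\in\{\mathcal{S},\mathcal{LNB},\mathcal{RNB}\}$ — is the main obstacle. My approach is to pass to the band retract $\mathcal{W}=\mathcal{V}\cap\mathcal{B}$, which lies in $[\mathcal{S},\mathcal{B}]$ and is proper. Inside $\mathcal{L}(\mathcal{B})=\mathcal{T}K$ (Lemma 2.1(ii)) the ladder of $\mathcal{B}$ is constant with value $\mathcal{B}_K=\mathcal{T}$, so since $\chi$ is an order isomorphism $\chi_\mathcal{W}\leq\chi_\mathcal{B}\equiv\mathcal{T}$ pointwise; as the only elements of $\mathcal{K}$ below $\mathcal{T}$ are $\mathcal{T},T^*,L^*,R^*$ and $\chi_\mathcal{W}\neq\chi_\mathcal{B}$, some $\tau\chi_\mathcal{W}\in\{T^*,L^*,R^*\}$, i.e. $\mathcal{W}_\tau\in\{\mathcal{S},\mathcal{LNB},\mathcal{RNB}\}$. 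To convert this into a statement about $\mathcal{V}_\tau$ I would invoke part (ii) through the band part, and here lies the crux: the compatibility $\mathcal{V}_\tau\cap\mathcal{B}=(\mathcal{V}\cap\mathcal{B})_\tau$ of the retraction $\mu_\mathcal{B}$ with the lower trace operators. Its easy inclusion $(\mathcal{V}\cap\mathcal{B})_\tau\subseteq\mathcal{V}_\tau\cap\mathcal{B}$ is immediate, but the reverse inclusion — which encodes the interaction of the congruence $\mathbf{B}$ with $T_\ell$ and $T_r$ — is where the real work is, and is the step I expect to be hardest. Granting it, $\mathcal{V}_\tau\cap\mathcal{B}=\mathcal{W}_\tau\in\{\mathcal{S},\mathcal{LNB},\mathcal{RNB}\}$, and part (ii) then yields $\mathcal{V}_\tau\in\{\mathcal{S},\mathcal{LNB},\mathcal{RNB}\}$, closing the cycle of equivalences in (i).
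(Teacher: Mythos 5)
Most of your argument is sound and in places more structural than the paper's (your facts (A) and (B) about the trace-least elements of $[\mathcal{S},\mathcal{CR}]$ give a clean uniform treatment of part (ii) and of the equivalence between the second and third conditions of (i), where the paper argues case by case using $\mathcal{SG}$ and $\mathcal{LNO}$). But there is a genuine gap at exactly the point you flag: the implication ``$\mathcal{B}\not\subseteq\mathcal{V}$ implies some $\mathcal{V}_{\tau}\in\{\mathcal{S},\mathcal{LNB},\mathcal{RNB}\}$'' in your write-up rests on the identity $\mathcal{V}_{\tau}\cap\mathcal{B}=(\mathcal{V}\cap\mathcal{B})_{\tau}$, whose nontrivial inclusion you explicitly grant without proof. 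A complete retraction of lattices does not automatically commute with the lower trace operators, so this cannot be waved through; and although the paper does invoke this commutation later (in the proof of Corollary 5.5), that occurs after the present lemma, so you cannot appeal to it here without circularity.

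The detour through the band retract is in fact unnecessary, and you already have every ingredient needed to close the cycle the way the paper does. You proved that a value $L^{\ast}$ or $R^{\ast}$ at $\tau$ forces the value $T^{\ast}$ at $\tau T_{\ell}$ (resp.\ $\tau T_r$); hence if $\tau\chi_{_{\mathcal{V}}}\neq T^{\ast}$ for all $\tau$, then $\tau\chi_{_{\mathcal{V}}}\notin\{T^{\ast},L^{\ast},R^{\ast}\}$ for all $\tau$, so $\tau\chi_{_{\mathcal{V}}}\geq\mathcal{T}=\tau\chi_{_{\mathcal{B}}}$ for every $\tau\in\Theta^1$ (using $\mathcal{B}_{\tau}=\mathcal{B}$ and $\mathcal{B}_K=\mathcal{T}$). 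Since $\chi$ is an order isomorphism by Theorem 4.5, $\chi_{_{\mathcal{V}}}\geq\chi_{_{\mathcal{B}}}$ gives $\mathcal{V}\supseteq\mathcal{B}$ directly. Replacing your band-retract step by this two-line comparison of ladders removes the gap and completes the proof.
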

\begin{proof}
(i) First assume that $\mathcal{B} \subseteq \mathcal{V}$.  By (P1), $\emptyset \chi_{\mathcal{V}} 
\in \mathcal{K}_0$ and therefore $\emptyset \chi_{_{\mathcal{V}}}  \neq T^{\ast}$.   For any 
$\tau \in \Theta$, we have by (2) that
$$
\tau \chi_{_{\mathcal{V}}} = \mathcal{V}_{\tau K^{\ast}} \supseteq \mathcal{B}_{\tau K^{\ast}} = 
\mathcal{B}_{K^{\ast}} = \mathcal{B}_{K} = \mathcal{T} \neq T^{\ast}.
$$

Conversely, assume that $\tau\chi_{\mathcal{V}} \neq T^{\ast}$,  for all $\tau \in \Theta^1$ .  Then, 
necessarily, $\tau\chi_{\mathcal{V}} \notin \{T^{\ast}, L^{\ast}, R^{\ast}\}$, since otherwise we 
would have $\sigma \chi_{\mathcal{V}} = T^{\ast}$ for all $\sigma \in \Theta$ with $|\sigma|  
\geq |\tau| +2$.  Consequently, for all $\tau \in \Theta^1$, 
$$
\tau\chi_{\mathcal{V}} \geq \mathcal{T} = \tau\chi_{\mathcal{B}}
$$
so that $\chi_{\mathcal{V}} \geq \chi_{\mathcal{B}}$ which, since $\chi$ is an order-isomorphism, 
implies that $\mathcal{V} \supseteq \mathcal{B}$.  This establishes the first equivalence.  The 
proof of the second equivalence follows similarly.\\

For the final equivalence, since $\mathcal{S}\subseteq \mathcal{V}$, we have $\mathcal{V}_{\tau} 
\supseteq \mathcal{S}$ for all $\tau \in \Theta$ and so
$$
\tau \chi_{_{\mathcal{V}}} = T^{\ast} \Longleftrightarrow \mathcal{V}_{\tau K^{\ast}} = T^{\ast} 
\Longleftrightarrow  \mathcal{V}_{\tau} = \mathcal{S}.
$$
In like manner, 
$$
\tau \chi_{_{\mathcal{V}}} = L^{\ast} \Longleftrightarrow \mathcal{V}_{\tau} = \mathcal{LNB},\; 
\mbox{and}\; 
\tau \chi_{_{\mathcal{V}}} = R^{\ast} \Longleftrightarrow \mathcal{V}_{\tau} = \mathcal{RNB}.
$$
This establishes the third equivalence.\\

\noindent
(ii)  Let $\mathcal{P} \in \{\mathcal{T, LNB, RNB}\}$.  Trivially 
$$
\mathcal{V}_{\tau} = \mathcal{P} \Longrightarrow \mathcal{V}_{\tau} \cap \mathcal{B} = \mathcal{P}.
$$
Conversely, assume that $\mathcal{V}_{\tau} \cap \mathcal{B} = \mathcal{P}$.  First consider the 
case where $\mathcal{P} = \mathcal{S}$.  In this case, it follows that $\mathcal{V}_{\tau}$ contains 
no non-trivial left zero semigroups or right zero semigroups.  Hence $\mathcal{V}$ is a variety of 
semilattices of groups.  Without loss of generality, we may assume that $t(\tau) = T_{\ell}$.  But that 
means that $\mathcal{V}_{\tau}$ is the least element in its $T_{\ell}$-class.  Since $\mathcal{SG}_{T_{\ell}} 
= \mathcal{S}$ we must have $\mathcal{V}_{\tau} =\mathcal{S}$.  Now consider the case where 
$\mathcal{P} = \mathcal{LNB}$.  

In this case, it follows that $\mathcal{V}_{\tau}$ contains 
no non-trivial right zero semigroups.  Hence $\mathcal{V}$ is a variety of 
semilattices of left groups.  Let $S \in \mathcal{V}$.  Then, as a semilattice of left groups, 
$S$ is orthodox and the subsemigroup of idempotents $E(S)$ must lie in $\mathcal{LNB}$.  
This implies that $S \in \mathcal{LNO}$, the variety of left normal orthogroups.  Hence 
$\mathcal{V} \subseteq \mathcal{LNO}$.   In this case, if 
 $t(\tau) = T_{\ell}$  
then $\mathcal{V}_{\tau}$ is again the least element in its $T_{\ell}$-class.  But $\mathcal{LNO}_{T_{\ell}} 
= \mathcal{S}$ so this would imply that $\mathcal{V}_{\tau} =\mathcal{S}$ which would contradict 
the assumption 
that $\mathcal{V}_{\tau} \cap \mathcal{B} = \mathcal{LNB}$. Hence, we must have 
$t(\tau) = T_r$ and $\mathcal{V}_{\tau}$ is the least element in its $T_r$-class.   
Now we have $\mathcal{LNB} \subseteq \mathcal{V}_{\tau}\subseteq \mathcal{LNO}$ where 
$\mathcal{LNO}_{T_r} = \mathcal{LNB}$.  Consequently, we must have 
$\mathcal{V}_{\tau} = \mathcal{LNO}$.  A dual argument will deal with the case 
$\mathcal{P} = \mathcal{RNB}$ and the proof is complete.
\end{proof}

\noindent
The following mappings provide a basic tool for the study of $\mathcal{L}(\mathcal{CR})$ in 
general and, in particular, for any $K$-class in $\mathcal{L}(\mathcal{CR})$:

\begin{eqnarray*}
\mu_{\mathcal{B}} &:& \mathcal{V} \longrightarrow \mathcal{V} \cap \mathcal{B}\\
\theta &:& \mathcal{V} \longrightarrow (\mathcal{V}\vee \mathcal{B}, \mathcal{V}\cap \mathcal{B}).
\end{eqnarray*}

Following [Pe2007] we denote by ${\mathbf{B}}$ the relation induced on 
$\mathcal{L}(\mathcal{CR})$ by the 
mapping $\mu_{\mathcal{B}}$.

\begin{theorem} 
{\rm (i)} The mapping $\mu_{\mathcal{B}}$ is a complete retraction of $\mathcal{L}(\mathcal{CR})$ 
onto $\mathcal{L}(\mathcal{B})$.  The mapping $\theta$ is a faithful representation of 
$\mathcal{L}(\mathcal{CR})$ as a subdirect product of $[\mathcal{B}, \mathcal{CR}] 
\times \mathcal{L}(\mathcal{B})$.  In other words, $\mathcal{B}$ is a neutral element in 
$\mathcal{L}(\mathcal{CR})$.\\
{\rm (ii)}  The relation ${\mathbf{B}}$ induced on $\mathcal{L}(\mathcal{CR})$ by $\mu_{\mathcal{B}}$ 
is a complete congruence and so its classes 
are intervals of the form $[\mathcal{V}_{\mathbf{B}}, \mathcal{V}^{\mathbf{B}}]$ where 
$\mathcal{V} \in \mathcal{L}(\mathcal{CR})$ and $\mathcal{V}_{\mathbf{B}} = 
\mathcal{V} \cap \mathcal{B}$.\\
{\rm (iii)}  Let $\mathcal{V}_{\alpha} \in \mathcal{L}(\mathcal{CR}), \alpha \in A$.  Then $(\bigcap\limits_{\alpha \in A} \mathcal{V}_{\alpha})^{{\bf B}} = 
\bigcap\limits_{\alpha \in A} \mathcal{V}_{\alpha}^{{\bf B}}$.\\
{\rm (iv)}  Let $\mathcal{V} \in \{\mathcal{S, LNB, RNB}\}$ and $u \in \Theta^1$.  Then 
$(\mathcal{V}^{u(K_{\ell}, K_r)})^{{\bf B}} = \mathcal{V}^u$.\\
{\rm (v)}  Let $\mathcal{V} \in \mathcal{L}(\mathcal{CR}), u \in \Theta^1$.  Then 
$\mathcal{V}^u\cap \mathcal{B} = (\mathcal{V}\cap \mathcal{B})^{u(K_{\ell}, K_r)}$.\\
{\rm (vi)} There is an algorithm to generate (finite) bases of identities for varieties of the form 
$\mathcal{U}^{\bf B}$ 
for $\mathcal{U} \in \mathcal{L}(\mathcal{B})$.
\end{theorem}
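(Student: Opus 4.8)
The plan is to reduce a basis for $\mathcal{U}^{\bf B}$ to three ingredients that are already available: the known finite bases of the band varieties $\mathcal{S}, \mathcal{LNB}, \mathcal{RNB}$; the algorithm (cited above from [J], [Pa], [PR90], [R]) producing a basis for $\mathcal{W}^{T_\ell}$ and $\mathcal{W}^{T_r}$ from any basis for $\mathcal{W}$; and parts (iii) and (iv) of this theorem. First I would express $\mathcal{U}$ itself as an explicit finite intersection of varieties $\mathcal{V}^{u(K_\ell, K_r)}$ with $\mathcal{V} \in \{\mathcal{S}, \mathcal{LNB}, \mathcal{RNB}\}$ and $u \in \Theta^1$, and then pass to $\mathcal{U}^{\bf B}$, converting such an intersection into the corresponding intersection of the $\mathcal{V}^u$.

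To obtain the representation I would start from the component form (3) of $\mathcal{U} = \chi_{\mathcal{U}}\xi$. Since $\mathcal{U}$ is a band variety, $\mathcal{U}_\tau \subseteq \mathcal{U} \subseteq \mathcal{B}$ forces $(\mathcal{U}_\tau)_K = \mathcal{T}$, so whenever $\tau\chi_{\mathcal{U}} \in \mathcal{K}_0$ we in fact have $\tau\chi_{\mathcal{U}} = \mathcal{T}$; thus every $\mathcal{K}_0$-component of (3) equals $\mathcal{T}^{K\overline{\tau}} = \mathcal{B}^{\overline{\tau}} \supseteq \mathcal{B}$, the $\emptyset$-term contributing exactly $\mathcal{B}$. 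Assuming $\mathcal{U} \subsetneq \mathcal{B}$ (the case $\mathcal{U} = \mathcal{B}$ gives $\mathcal{U}^{\bf B} = \mathcal{CR}$, whose defining identities are listed in Section 2), Lemma 5.1(i) supplies at least one component of type $T^*, L^*$ or $R^*$, i.e. at least one factor $\mathcal{V}^{\overline{\tau}}$ with $\mathcal{V} \in \{\mathcal{S}, \mathcal{LNB}, \mathcal{RNB}\}$. Intersecting each such factor with $\mathcal{B}$ and invoking Lemma 2.2 together with $\mathcal{V}^K = \mathcal{B}$ (valid because all band varieties are $K$-related to $\mathcal{T}$ by Lemma 2.1(ii)) gives $\mathcal{V}^{\overline{\tau}} \cap \mathcal{B} = \mathcal{V}^{\overline{\tau}(K_\ell, K_r)} \subseteq \mathcal{B}$. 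The redundant $\mathcal{B}$ and $\mathcal{B}^{\overline{\tau}}$ factors then drop out, leaving
\[
\mathcal{U} = \bigcap_i \mathcal{V}_i^{\overline{\tau_i}(K_\ell, K_r)}, \qquad \mathcal{V}_i \in \{\mathcal{S}, \mathcal{LNB}, \mathcal{RNB}\}.
\]
Applying (iii) and then (iv) yields $\mathcal{U}^{\bf B} = \bigcap_i (\mathcal{V}_i^{\overline{\tau_i}(K_\ell, K_r)})^{\bf B} = \bigcap_i \mathcal{V}_i^{\overline{\tau_i}}$.

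It remains to make this effective. Finiteness of the intersection holds because $\chi_{\mathcal{U}}$ is order preserving on the layered poset $\Theta^1$ while $T^*$ is the bottom of $\mathcal{K}$: once the ladder reaches $T^*$ it stays there, and any $L^*$ (respectively $R^*$) value drops to $\mathcal{S}$ after one further step, since $\mathcal{LNB}_{T_\ell} = \mathcal{RNB}_{T_r} = \mathcal{S}$. Hence only finitely many $\tau$ produce an essential component, and since the band-variety lattice and the operators $\mathcal{W} \mapsto \mathcal{W}_\tau$ are computable, the ladder of $\mathcal{U}$ and the finite list of pairs $(\mathcal{V}_i, \tau_i)$ can be read off algorithmically. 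For each $i$, starting from the known finite basis of $\mathcal{V}_i$ and iterating the cited $T_\ell$/$T_r$ basis algorithm along the letters of $\overline{\tau_i}$ (using $\mathcal{V}^{\sigma T_p} = (\mathcal{V}^\sigma)^{T_p}$) produces a finite basis for $\mathcal{V}_i^{\overline{\tau_i}}$; the union of these finitely many finite bases is then a basis for $\bigcap_i \mathcal{V}_i^{\overline{\tau_i}} = \mathcal{U}^{\bf B}$.

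The main obstacle is the representation step: one must check that the component form really collapses to a finite intersection of the stated shape, i.e. that the $\mathcal{K}_0$-components are genuinely redundant and that the stabilization of the ladder leaves only finitely many factors, so that the final basis is finite. Once this is secured, the conversion from $\mathcal{U}$ to $\mathcal{U}^{\bf B}$ through (iii) and (iv) and the assembly of bases are routine, the required algorithms for band varieties and for the upper operators $(\ )^{T_\ell}, (\ )^{T_r}$ being already in hand.
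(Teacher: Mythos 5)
Your proposal addresses only part (vi) of the six-part statement. For parts (i)--(iv) the paper simply cites Trotter [T] and Reilly--Zhang [RZ2000], so leaving those to citation is defensible; but part (v), the identity $\mathcal{V}^u\cap \mathcal{B} = (\mathcal{V}\cap \mathcal{B})^{u(K_{\ell}, K_r)}$ for arbitrary $\mathcal{V} \in \mathcal{L}(\mathcal{CR})$, is the one assertion the paper actually proves, and your proposal says nothing about it. The paper's argument for (v) rests on the observation that in a band the congruence $\mathcal{L}^0$ is idempotent pure, whence $\mathcal{L}^0 = (\tau\cap\mathcal{L})^0$ there; combining this with the descriptions of $\mathcal{V}^{T_{\ell}}$ and $\mathcal{V}^{K_{\ell}}$ in Lemma 2.1(iv) gives $\mathcal{V}^{T_{\ell}}\cap\mathcal{B} = (\mathcal{V}\cap\mathcal{B})^{K_{\ell}}$, and induction on $|u|$ finishes. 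Note that your Lemma 2.2 (which computes $\mathcal{V}^\sigma\cap\mathcal{V}^K$ rather than $\mathcal{V}^\sigma\cap\mathcal{B}$) does not substitute for this, so the omission is a genuine gap in the proof of the theorem as stated.

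For part (vi) itself your route is sensible and is, in substance, the [RZ2000] construction that the paper invokes by citation: decompose $\mathcal{U}$ via the component form into an intersection $\bigcap_i \mathcal{V}_i^{\overline{\tau_i}(K_{\ell},K_r)}$ with $\mathcal{V}_i\in\{\mathcal{S},\mathcal{LNB},\mathcal{RNB}\}$, push ${\bf B}$ through the intersection using (iii) and (iv), and then iterate the upper-operator basis algorithm. Two points need tightening. First, the component form (3) and Lemma 5.1 live in $[\mathcal{S},\mathcal{CR}]$, so the four band varieties $\mathcal{T}, \mathcal{LZ}, \mathcal{RZ}, \mathcal{RB}$ not containing $\mathcal{S}$ must be treated separately (their ${\bf B}$-maxima are $\mathcal{G}$ and the corresponding rectangular group varieties). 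Second, the reduction of the a priori infinite intersections $\bigcap_{\tau\varphi = T^*}\mathcal{S}^{\overline{\tau}}$ to finite ones needs the explicit containment $\mathcal{S}^{\overline{\tau}} \supseteq \mathcal{S}^{\overline{\sigma}}$ when $\sigma$ is a final segment of $\tau$ (so that all factors beyond the stabilization level are absorbed by those at that level); as written, "only finitely many $\tau$ produce an essential component" is asserted rather than argued. With those repairs your part (vi) stands, but the theorem as a whole still lacks a proof of part (v).
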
 
\begin{proof}
For parts (i) and (ii), see (Trotter [T]).\\
(iii)  See Reilly/Zhang [RZ2000], Lemma 4.9.\\
(iv)  See Reilly/Zhang [RZ2000], Lemma 4.11.\\
(v)  Recall that we denote by $\tau$ the greatest idempotent pure congruence on a 
completely regular semigroup.    We always have $(\tau\cap \mathcal{L})^0 \subseteq 
\mathcal{L}^0$.  However, in a band, the congruence $\mathcal{L}^0$ is idempotent pure 
so that $\mathcal{L}^0 \subseteq \tau\cap\mathcal{L}$.  Hence $\mathcal{L}^0 \subseteq 
(\tau\cap \mathcal{L})^0$.  Therefore, in any band, $\mathcal{L}^0 = (\tau\cap \mathcal{L})^0$.  
Then, we have
\begin{eqnarray*}
(\mathcal{V}^{T_{\ell}}) \cap \mathcal{B} &=& \{S \in \mathcal{CR}| S/\mathcal{L}^0 \in \mathcal{V}\}\cap 
\mathcal{B}\;\;\mbox{by Lemma 2.1(iv)}\\
&=& \{S \in \mathcal{B}| S/\mathcal{L}^0 \in \mathcal{V}\}\\
&=& \{S \in \mathcal{B}| S/\mathcal{L}^0 \in \mathcal{V}\cap \mathcal{B}\}\\
&=& \{S \in \mathcal{B}| S/(\tau \cap\mathcal{L})^0 \in \mathcal{V}\cap \mathcal{B}\}\\
&=& \{S \in \mathcal{CR}| S/(\tau \cap\mathcal{L})^0 \in \mathcal{V}\cap \mathcal{B}\}\\
&=& (\mathcal{V} \cap \mathcal{B})^{K_{\ell}}\;\;\mbox{by Lemma 2.1(iv)}.
\end{eqnarray*}
Dually, we also have $(\mathcal{V}^{T_r}) \cap \mathcal{B} = (\mathcal{V} \cap \mathcal{B})^{K_{r}}$.  
The claim now follows by a simple induction argument. \\
(vi)  See [RZ2000] and [Pe2007].
\end{proof}

Reilly and Zhang [RZ2000] provide an inductive process to generate bases of
identities for the varieties of the form $\mathcal{V}^B$
$\;(\mathcal{V} \in \mathcal{L}(\mathcal{B}))$ based on the bases
of identities for varieties in $\mathcal{L}(\mathcal{B})$ developed
by Gerhard and Petrich [GP]. Similar results were obtained  
by Petrich [Pe2007].   \\

\noindent
Although the relation ${\bf B}$ does not appear explicitly in the definition of the mapping $\xi$ in 
Pol\'{a}k's Theorem, the next result shows how the two concepts are related.

\begin{theorem}
Let $\mathcal{V} \in [\mathcal{S, CR}]$.  Then
\[\mathcal{V} = 
\left \{ \begin{array}{ll}
\bigcap\limits_{\tau \in \Theta^1} (\mathcal{V}_{\tau})^{K\overline{\tau}} & \mbox{if $\mathcal{B} \subseteq 
\mathcal{V}$}\\

\mathcal{V}^K \cap \bigcap\limits_{\mathcal{V}_{\tau K^{\ast}} \in \mathcal{K}_0} 
(\mathcal{V}_{\tau}^{K\overline{\tau}}) \cap 
\mathcal{V}^{{\bf B}} & \mbox{otherwise.}
\end{array}
\right. \]
\end{theorem}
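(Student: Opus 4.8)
The plan is to obtain both formulas by specializing the component form (3) of $\mathcal{V} = \chi_{_{\mathcal{V}}}\xi$, which is available by Theorem 4.5, and then recognizing each piece. Throughout I use the identifications $\emptyset\chi_{_{\mathcal{V}}} = \mathcal{V}_K$ and $\tau\chi_{_{\mathcal{V}}} = \mathcal{V}_{\tau K^*}$ for $\tau \in \Theta$, together with the reading of degenerate entries from the proof of Lemma 5.1(i): for $\tau \in \Theta$ one has $\tau\chi_{_{\mathcal{V}}} \in \{T^*, L^*, R^*\}$ exactly when $\mathcal{V}_\tau \in \{\mathcal{S}, \mathcal{LNB}, \mathcal{RNB}\}$, and otherwise $\tau\chi_{_{\mathcal{V}}} = (\mathcal{V}_\tau)_K \in \mathcal{K}_0$. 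The recurring simplification is that whenever $\mathcal{V}_{\tau K^*} \in \mathcal{K}_0$, the relation $(\mathcal{V}_\tau)_K \; K \; \mathcal{V}_\tau$ forces $((\mathcal{V}_\tau)_K)^K = (\mathcal{V}_\tau)^K$, so that
$$
(\tau\chi_{_{\mathcal{V}}})^{K\overline{\tau}} = \big(((\mathcal{V}_\tau)_K)^K\big)^{\overline{\tau}} = \big((\mathcal{V}_\tau)^K\big)^{\overline{\tau}} = (\mathcal{V}_\tau)^{K\overline{\tau}} ;
$$
with $\tau = \emptyset$ this records that the leading factor $(\emptyset\chi_{_{\mathcal{V}}})^K$ equals $\mathcal{V}^K = (\mathcal{V}_\emptyset)^{K\overline{\emptyset}}$.

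For the case $\mathcal{B} \subseteq \mathcal{V}$, Lemma 5.1(i) tells us that $\mathcal{V}_\tau \notin \{\mathcal{S}, \mathcal{LNB}, \mathcal{RNB}\}$ for every $\tau \in \Theta$, so every ladder entry lies in $\mathcal{K}_0$ and the three band components of (3) are intersections over empty index sets, hence equal to $\mathcal{CR}$. Folding $\mathcal{V}^K$ in as the $\tau = \emptyset$ term and applying the simplification to each remaining factor then yields $\mathcal{V} = \bigcap_{\tau \in \Theta^1}(\mathcal{V}_\tau)^{K\overline{\tau}}$, which is the first formula.

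For the case $\mathcal{B} \not\subseteq \mathcal{V}$, the leading factor and the $\mathcal{K}_0$-component of (3) reduce, exactly as above, to $\mathcal{V}^K \cap \bigcap_{\mathcal{V}_{\tau K^*}\in \mathcal{K}_0}(\mathcal{V}_\tau)^{K\overline{\tau}}$, so it remains only to show that the three band components may be replaced by $\mathcal{V}^{{\bf B}}$. One inclusion is immediate: $\mathcal{V}$ is contained in $\mathcal{V}^K$, in each $(\mathcal{V}_\tau)^{K\overline{\tau}}$, and in $\mathcal{V}^{{\bf B}}$ (the top of its ${\bf B}$-class), so $\mathcal{V}$ lies inside the proposed right-hand side. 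For the reverse inclusion, since $\mathcal{V}$ is exactly the intersection (3), it suffices to prove that $\mathcal{V}^{{\bf B}} \subseteq \mathcal{P}^{\overline{\tau}}$ whenever $\mathcal{P} \in \{\mathcal{S}, \mathcal{LNB}, \mathcal{RNB}\}$ and $\mathcal{V}_\tau = \mathcal{P}$; intersecting over the band components then places the right-hand side inside the band part of (3), hence inside $\mathcal{V}$.

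The crux is this last inclusion, and it is where I expect the genuine work to lie. Because $\mathcal{P}^{\overline{\tau}}$ is itself a term of (3), we have $\mathcal{V} \subseteq \mathcal{P}^{\overline{\tau}}$, whence $\mathcal{V} \cap \mathcal{B} \subseteq \mathcal{P}^{\overline{\tau}} \cap \mathcal{B} = \mathcal{P}^{\overline{\tau}(K_{\ell}, K_r)}$ by Theorem 5.3(v) (using $\mathcal{P} \cap \mathcal{B} = \mathcal{P}$). Since $\mathcal{V} \cap \mathcal{B}$ is ${\bf B}$-related to $\mathcal{V}$, we have $\mathcal{V}^{{\bf B}} = (\mathcal{V}\cap\mathcal{B})^{{\bf B}}$, and applying the order-preserving upper operator of the complete congruence ${\bf B}$ followed by Theorem 5.3(iv) gives
$$
\mathcal{V}^{{\bf B}} = (\mathcal{V}\cap\mathcal{B})^{{\bf B}} \subseteq \big(\mathcal{P}^{\overline{\tau}(K_{\ell}, K_r)}\big)^{{\bf B}} = \mathcal{P}^{\overline{\tau}} ,
$$
as required. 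The main obstacle is thus the careful bookkeeping of the band operators across $\mu_{\mathcal{B}}$: noting that $\mathcal{W} \mapsto \mathcal{W}^{{\bf B}}$ is monotone (a general feature of complete lattice congruences) and invoking Theorem 5.3(iv),(v) in precisely the forms $\mathcal{P}^{\overline{\tau}} \cap \mathcal{B} = \mathcal{P}^{\overline{\tau}(K_{\ell}, K_r)}$ and $(\mathcal{P}^{\overline{\tau}(K_{\ell}, K_r)})^{{\bf B}} = \mathcal{P}^{\overline{\tau}}$, which are what tie the three separate band components of (3) to the single operator $\mathcal{V}^{{\bf B}}$.
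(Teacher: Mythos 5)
Your proof is correct and follows essentially the same route as the paper's: both start from the component form (3) of $\chi_{_{\mathcal{V}}}\xi$, dispose of the case $\mathcal{B} \subseteq \mathcal{V}$ by observing that the three band components are vacuous, and tie the band components to $\mathcal{V}^{{\bf B}}$ through the identities $\mathcal{P}^{\overline{\tau}}\cap \mathcal{B} = \mathcal{P}^{\overline{\tau (K_{\ell}, K_r)}}$ and $(\mathcal{P}^{\overline{\tau (K_{\ell}, K_r)}})^{{\bf B}} = \mathcal{P}^{\overline{\tau}}$ from Theorem 5.2(iv),(v) (which you cite with the label 5.3, a harmless slip). The only organizational difference is that the paper establishes the exact equality of $\mathcal{V}^{{\bf B}}$ with the intersection of the three band components by invoking the fact that $(\cdot)^{{\bf B}}$ commutes with arbitrary intersections, whereas you prove the two inclusions separately and need only the monotonicity of $(\cdot)^{{\bf B}}$; both are valid.
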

\begin{proof}
From Theorem 4.5, we know that 
$\mathcal{V} = \mathcal{V}\chi\xi = \chi_{_{\mathcal{V}}}\xi$ which, when written  in component  
form, provides
$$
\mathcal{V} = \chi_{_{\mathcal{V}}} \xi = (\emptyset \chi_{_{\mathcal{V}}} )^K \cap \bigcap\limits_{\tau 
\chi_{_{\mathcal{V}}}
\in \mathcal{K}_0} (\tau \chi_{_{\mathcal{V}}} )^{K\overline{\tau}} 
\cap  \bigcap\limits_{\tau \chi_{_{\mathcal{V}}} = T^*}
\mathcal{S}^{\overline{\tau}}
\cap
\bigcap\limits_{\tau \chi_{_{\mathcal{V}}} = L^*} \mathcal{LNB}^{\overline{\tau}}
\cap \bigcap\limits_{\tau \chi_{_{\mathcal{V}}} = R^*} \mathcal{RNB}^{\overline{\tau}}.
$$
First,  $(\emptyset \chi_{_{\mathcal{V}}} )^K = (\mathcal{V}_K)^K = 
\mathcal{V}^K$.  Now suppose that $\mathcal{B} \subseteq \mathcal{V}$.  By Lemma 5.1(i), 
the set of $\tau \in \Phi$ such that $\tau\chi_{_{\mathcal{V}}} = T^{\ast}$ is empty.  Thus 
there is no third component in the component form of $\mathcal{V}$.  
Likewise, there is no fourth or fifth 
component.  Thus we are left with 
$$
\mathcal{V} = \chi_{\mathcal{V}}\xi = \mathcal{V}^K \cap \bigcap\limits_{\tau \in \Theta} (\tau \chi_{_{\mathcal{V}}} )^{K\overline{\tau}} = \mathcal{V}^K \cap \bigcap\limits_{\tau \in \Theta} (\mathcal{V}_{\tau K} )^{K\overline{\tau}} 
= \bigcap\limits_{\tau \in \Theta^1} (\mathcal{V}_{\tau} )^{K\overline{\tau}} 
$$ 
as required.\\

Now suppose that $\mathcal{B}\nsubseteq \mathcal{V}$.  By Lemma 5.1, there exists $\tau \in \Theta$ 
such that $\tau\chi_{_{\mathcal{V}}} \in \{T^{\ast}, L^{\ast}, R^{\ast}\}$.  
The first thing that we want to do is 
obtain an expression for $\mathcal{V}\cap \mathcal{B}$.
As before, the first component of $\chi_{_{\mathcal{V}}}\xi$ 
is simply $\mathcal{V}^K$ which contains $\mathcal{B}$.    Each term in the second component 
is of the form $(\tau \chi_{_{\mathcal{V}}} )^{K\overline{\tau}}$ where $\tau \chi_{_{\mathcal{V}}} 
\in \mathcal{K}_0$ and therefore $\tau \chi_{_{\mathcal{V}}} \supseteq \mathcal{T}$.  Hence 
$(\tau \chi_{_{\mathcal{V}}} )^{K\overline{\tau}}\supset \mathcal{T}^{K\overline{\tau}} 
= \mathcal{B}^{\overline{\tau}} 
\supseteq \mathcal{B}$.  Therefore we obtain, with the help of Lemma 5.2(v) in the third step below, 

\begin{eqnarray*}
\mathcal{V} \cap \mathcal{B} &=& \bigcap\limits_{\tau \chi_{_{\mathcal{V}}} = T^*}
\mathcal{S}^{\overline{\tau}}
\cap
\bigcap\limits_{\tau \chi_{_{\mathcal{V}}} = L^*} \mathcal{LNB}^{\overline{\tau}}
\cap \bigcap\limits_{\tau \chi_{_{\mathcal{V}}} = R^*} \mathcal{RNB}^{\overline{\tau}} \;
\cap \mathcal{B}\\
&=& \bigcap\limits_{\tau \chi_{_{\mathcal{V}}} = T^*}
(\mathcal{S}^{\overline{\tau}}\cap \mathcal{B})
\cap
\bigcap\limits_{\tau \chi_{_{\mathcal{V}}} = L^*} (\mathcal{LNB}^{\overline{\tau}}\cap \mathcal{B} 
\cap \bigcap\limits_{\tau \chi_{_{\mathcal{V}}} = R^*} (\mathcal{RNB}^{\overline{\tau}} 
\cap \mathcal{B})\\
&=&
\bigcap\limits_{\tau \chi_{_{\mathcal{V}}} = T^*}
\mathcal{S}^{\overline{\tau (K_{\ell}, K_r)}}
\cap
\bigcap\limits_{\tau \chi_{_{\mathcal{V}}} = L^*} \mathcal{LNB}^{\overline{\tau (K_{\ell}, K_r)}}
\cap \bigcap\limits_{\tau \chi_{_{\mathcal{V}}} = R^*} \mathcal{RNB}^{\overline{\tau (K_{\ell}, K_r)}}.
\end{eqnarray*}
Consequently, by Lemma 5.2(iii) and (iv),
\begin{eqnarray*}
\mathcal{V}^{{\bf B}} &=& 
(\mathcal{V} \cap \mathcal{B})^{{\bf B}}\\ 
&=&
(\bigcap\limits_{\tau \chi_{_{\mathcal{V}}} = T^*}
\mathcal{S}^{\overline{\tau (K_{\ell}, K_r)}}
\cap
\bigcap\limits_{\tau \chi_{_{\mathcal{V}}} = L^*} \mathcal{LNB}^{\overline{\tau (K_{\ell}, K_r)}}
\cap \bigcap\limits_{\tau \chi_{_{\mathcal{V}}} = R^*} \mathcal{RNB}^{\overline{\tau (K_{\ell}, K_r)}})
^{{\bf B}}.\\
&=&
\bigcap\limits_{\tau \chi_{_{\mathcal{V}}} = T^*}
(\mathcal{S}^{\overline{\tau (K_{\ell}, K_r)}})^{{\bf B}}
\cap
\bigcap\limits_{\tau \chi_{_{\mathcal{V}}} = L^*} (\mathcal{LNB}^{\overline{\tau (K_{\ell}, K_r)}})^{{\bf B}}
\cap \bigcap\limits_{\tau \chi_{_{\mathcal{V}}} = R^*} (\mathcal{RNB}^{\overline{\tau (K_{\ell}, K_r)}})
^{{\bf B}}\\
&=& \bigcap\limits_{\tau \chi_{_{\mathcal{V}}} = T^*}
\mathcal{S}^{\overline{\tau}}
\cap
\bigcap\limits_{\tau \chi_{_{\mathcal{V}}} = L^*} \mathcal{LNB}^{\overline{\tau}}
\cap \bigcap\limits_{\tau \chi_{_{\mathcal{V}}} = R^*} \mathcal{RNB}^{\overline{\tau}}.
\end{eqnarray*}
In other words, the last three components in the expression $\chi_{\mathcal{V}}$ for $\mathcal{V}$ 
combine 
to give exactly $\mathcal{V}^{{\bf B}}$.  Thus the long form expression for $\mathcal{V}$ reduces in 
this case to 
precisely
$$
\mathcal{V}^K \cap \bigcap\limits_{\mathcal{V}_{\tau K^{\ast}} \in \mathcal{K}_0} 
(\mathcal{V}_{\tau}^{K\overline{\tau}}) \cap 
\mathcal{V}^{{\bf B}} 
$$
as required.
\end{proof}

In the proof of Theorem 5.3, we obtained an expression for $\mathcal{V} \cap \mathcal{B}$ in terms 
of the last three components of the component form for $\mathcal{V}$.  However, $\mathcal{S} = 
\mathcal{LRB}\cap \mathcal{RRB} = \mathcal{LNB}^{K_{\ell}} \cap \mathcal{RNB}^{K_r}$ so that, in 
fact, every variety of bands can be expressed as the intersection of varieties of the form 
$\mathcal{LNB}^{\overline{\tau (K_{\ell}, K_r)}}$ and $\mathcal{RNB}^{\overline{\tau (K_{\ell}, K_r)}}$ 
(see Pastijn [J.Austral. 1990] for details).  However, we did not require that refinement for this 
proof.  The proof also provides us with a nice expression for the variety $\mathcal{V}^{{\bf B}}$ 
$(\mathcal{V} \in \mathcal{L}(\mathcal{CR}))$
which we can simplify slightly as follows.

\begin{corollary}  Let $\mathcal{V} \in [\mathcal{S, CR}]$.  Then
$$
\mathcal{V}^{{\bf B}} = \mathcal{CR}\cap \bigcap\limits_{\mathcal{V}_{\tau} = \mathcal{S}}
\mathcal{S}^{\overline{\tau}}
\cap
\bigcap\limits_{\mathcal{V}_{\tau} = \mathcal{LNB}} \mathcal{LNB}^{\overline{\tau}}
\cap \bigcap\limits_{\mathcal{V}_{\tau} = \mathcal{RNB}} \mathcal{RNB}^{\overline{\tau}}.
$$
\end{corollary}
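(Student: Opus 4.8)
The plan is to read the result straight off Theorem 5.3, treating the two cases $\mathcal{B}\subseteq\mathcal{V}$ and $\mathcal{B}\nsubseteq\mathcal{V}$ separately and then checking that the single displayed identity subsumes both. The only genuine work beyond Theorem 5.3 is a translation of index conditions via Lemma 5.1 and a verification that the leading factor $\mathcal{CR}$ correctly handles the degenerate case in which all three index sets are empty.

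First I would recall that while proving Theorem 5.3 in the case $\mathcal{B}\nsubseteq\mathcal{V}$ we already obtained
$$
\mathcal{V}^{{\bf B}} = \bigcap\limits_{\tau \chi_{_{\mathcal{V}}} = T^*}\mathcal{S}^{\overline{\tau}}\cap\bigcap\limits_{\tau \chi_{_{\mathcal{V}}} = L^*} \mathcal{LNB}^{\overline{\tau}}\cap \bigcap\limits_{\tau \chi_{_{\mathcal{V}}} = R^*} \mathcal{RNB}^{\overline{\tau}}.
$$
To put this into the form asserted in the corollary it suffices to rewrite the three index conditions in terms of the lower operators. This is precisely the content of the displayed equivalences established in the proof of Lemma 5.1(i), namely $\tau\chi_{_{\mathcal{V}}} = T^{\ast} \Longleftrightarrow \mathcal{V}_{\tau} = \mathcal{S}$, $\tau\chi_{_{\mathcal{V}}} = L^{\ast} \Longleftrightarrow \mathcal{V}_{\tau} = \mathcal{LNB}$ and $\tau\chi_{_{\mathcal{V}}} = R^{\ast} \Longleftrightarrow \mathcal{V}_{\tau} = \mathcal{RNB}$. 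Substituting these converts each index set into the corresponding one in the corollary. Since in this case each factor is already a subvariety of $\mathcal{CR}$, prefixing the intersection with $\mathcal{CR}\cap$ leaves the value unchanged.

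Next I would dispose of the case $\mathcal{B}\subseteq\mathcal{V}$. By Lemma 5.1(i), the three index sets $\{\tau : \mathcal{V}_{\tau}=\mathcal{S}\}$, $\{\tau : \mathcal{V}_{\tau}=\mathcal{LNB}\}$ and $\{\tau : \mathcal{V}_{\tau}=\mathcal{RNB}\}$ are all empty, so the right-hand side of the asserted identity collapses to $\mathcal{CR}$. On the other hand, when $\mathcal{B}\subseteq\mathcal{V}$ we have $\mathcal{V}\cap\mathcal{B}=\mathcal{B}$, whence the ${\bf B}$-class of $\mathcal{V}$ is $[\mathcal{B},\mathcal{CR}]$ and its top element is $\mathcal{CR}$ itself (since $\mathcal{CR}\cap\mathcal{B}=\mathcal{B}$); thus $\mathcal{V}^{{\bf B}}=\mathcal{CR}$, in agreement with the formula.

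Combining the two cases gives the single uniform expression. There is no serious obstacle here; the one point that needs care is the boundary case $\mathcal{B}\subseteq\mathcal{V}$, where one must confirm simultaneously that the index sets vanish and that $\mathcal{V}^{{\bf B}}=\mathcal{CR}$. This is exactly where the explicit factor $\mathcal{CR}$ earns its keep: it pins down the value of an otherwise ill-defined empty meet, while remaining inert whenever at least one index set is nonempty. Hence the corollary follows from Theorem 5.3 and Lemma 5.1 without further computation.
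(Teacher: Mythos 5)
Your proposal is correct and follows essentially the same route as the paper: both cases rest on the expression for $\mathcal{V}^{{\bf B}}$ extracted from the proof of Theorem 5.3, combined with the equivalences $\tau\chi_{_{\mathcal{V}}}=T^{\ast}\Leftrightarrow\mathcal{V}_{\tau}=\mathcal{S}$ (and their $L^{\ast}$, $R^{\ast}$ analogues), which the paper re-derives in situ and you instead cite from the proof of Lemma 5.1(i). The handling of the case $\mathcal{B}\subseteq\mathcal{V}$ via Lemma 5.1(i) and the role of the leading factor $\mathcal{CR}$ likewise match the paper's argument.
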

\begin{proof}
If $\mathcal{B}\subseteq \mathcal{V}$, then clearly $\mathcal{V}^{{\bf B}} = \mathcal{CR}$.  On 
the other hand by Lemma 5.1(i), the second, third and fourth components of the expression 
on the right in the statement of the corollary are vacuous and therefore the right hand side 
reduces to $\mathcal{CR}$.  \\

So suppose that $\mathcal{B}\nsubseteq \mathcal{V}$.  From the proof of Theorem 5.3, we 
have:
$$
\mathcal{V}^{{\bf B}} = \bigcap\limits_{\tau \chi_{_{\mathcal{V}}} = T^*}
\mathcal{S}^{\overline{\tau}}
\cap
\bigcap\limits_{\tau \chi_{_{\mathcal{V}}} = L^*} \mathcal{LNB}^{\overline{\tau}}
\cap \bigcap\limits_{\tau \chi_{_{\mathcal{V}}} = R^*} \mathcal{RNB}^{\overline{\tau}}.
$$
Let 
$\tau \in \Theta$ be such that $\tau\chi_{_{\mathcal{V}}} = T^{\ast}$.  Then 
$\mathcal{V}_{\tau K^{\ast}} = T^{\ast}$.  Since $\mathcal{V}_{\tau} \supseteq \mathcal{S}$, it 
follows that $\mathcal{V}_{\tau} = \mathcal{S}$.  Conversely, if $\mathcal{V}_{\tau} = \mathcal{S}$, 
then $\mathcal{V}_{\tau K^{\ast}} = T^{\ast}$.  Consequently, the component 
$\bigcap\limits_{\tau \chi_{_{\mathcal{V}}} = T^*}\mathcal{S}^{\overline{\tau}}$ 
 can be replaced by $\bigcap\limits_{\mathcal{V}_{\tau} = \mathcal{S}}
\mathcal{S}^{\overline{\tau}}$.  In like manner, we can replace 
$\bigcap\limits_{\tau \chi_{_{\mathcal{V}}} = L^*} \mathcal{LNB}^{\overline{\tau}}$ by
$\bigcap\limits_{\mathcal{V}_{\tau} = \mathcal{LNB}} \mathcal{LNB}^{\overline{\tau}}$ and 
$\bigcap\limits_{\tau \chi_{_{\mathcal{V}}} = R^*} \mathcal{RNB}^{\overline{\tau}}$ by 
$\bigcap\limits_{\mathcal{V}_{\tau} = \mathcal{RNB}} \mathcal{RNB}^{\overline{\tau}}$ 
thereby obtaining the claim.
\end{proof}

In the final result of this section, we take advantage of Corollary 5.4 to derive a characterization 
of the ${\bf B}$-relation in a manner related to ladders.

\begin{corollary}
Let $\mathcal{U, V}\in [\mathcal{S, CR}]$.  Then the following statements are equivalent.\\
{\rm (i)}  $\mathcal{U}\: {\bf B}\: \mathcal{V}$.\\
{\rm (ii)} For all $\mathcal{P} \in \{\mathcal{S, LNB, RNB}\}, \tau \in \Theta$, we have 
$$
\mathcal{U}_{\tau} = \mathcal{P} \Longleftrightarrow \mathcal{V}_{\tau} = \mathcal{P}. 
$$
{\rm (iii)}  For all $\mathcal{P} \in \{T^{\ast}, L^{\ast}, R^{\ast}\}, \tau \in \Theta$, we have 
$$
\tau\chi_{_{\mathcal{U}}} = \mathcal{P} \Longleftrightarrow \tau\chi_{_{\mathcal{V}}} = \mathcal{P}. 
$$
\end{corollary}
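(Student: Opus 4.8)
The plan is to read the three statements as reformulations of one another through the ladder, with (i)$\Leftrightarrow$(ii) carrying the genuine content. First I would dispose of (ii)$\Leftrightarrow$(iii). The computation inside the proof of Lemma 5.1(i) shows that for every $\mathcal{V}\in[\mathcal{S,CR}]$ and every $\tau\in\Theta$ one has $\tau\chi_{_{\mathcal{V}}}=T^{\ast}\Leftrightarrow\mathcal{V}_{\tau}=\mathcal{S}$, $\tau\chi_{_{\mathcal{V}}}=L^{\ast}\Leftrightarrow\mathcal{V}_{\tau}=\mathcal{LNB}$ and $\tau\chi_{_{\mathcal{V}}}=R^{\ast}\Leftrightarrow\mathcal{V}_{\tau}=\mathcal{RNB}$. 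Under the matching $T^{\ast}\leftrightarrow\mathcal{S}$, $L^{\ast}\leftrightarrow\mathcal{LNB}$, $R^{\ast}\leftrightarrow\mathcal{RNB}$, statements (ii) and (iii) become literally the same assertion, so it suffices to establish (i)$\Leftrightarrow$(ii).

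For (ii)$\Rightarrow$(i) I would argue as follows. By Theorem 5.2(ii), $\mathcal{U}\,\mathbf{B}\,\mathcal{V}$ holds iff $\mathcal{U}^{\mathbf{B}}=\mathcal{V}^{\mathbf{B}}$. Corollary 5.4 expresses $\mathcal{V}^{\mathbf{B}}$ purely in terms of the three index families $\{\tau:\mathcal{V}_{\tau}=\mathcal{P}\}$ for $\mathcal{P}\in\{\mathcal{S},\mathcal{LNB},\mathcal{RNB}\}$. If (ii) holds these families coincide for $\mathcal{U}$ and $\mathcal{V}$, hence the two Corollary~5.4 formulas are identical and $\mathcal{U}^{\mathbf{B}}=\mathcal{V}^{\mathbf{B}}$, giving (i). This direction is routine.

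The substance is (i)$\Rightarrow$(ii), which is exactly the injectivity of the correspondence of Corollary 5.4: I must recover the index families from $\mathcal{V}^{\mathbf{B}}$, equivalently from $\mathcal{W}:=\mathcal{V}\cap\mathcal{B}$, the bottom of the $\mathbf{B}$-class. The key observation is that $\mathcal{W}$ is itself a band variety in $[\mathcal{S,CR}]$, so it has its own ladder $\chi_{_{\mathcal{W}}}$; since each $\mathcal{W}_{\tau}$ is again a band variety containing $\mathcal{S}$, we have $\mathcal{W}_{\tau K}=\mathcal{T}$, and therefore $\tau\chi_{_{\mathcal{W}}}=\mathcal{W}_{\tau K^{\ast}}$ lies in $\{T^{\ast},L^{\ast},R^{\ast}\}$ precisely at the positions where $\mathcal{W}_{\tau}\in\{\mathcal{S},\mathcal{LNB},\mathcal{RNB}\}$ (with matching values) and equals $\mathcal{T}$ elsewhere. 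By the injectivity of $\chi$ (Theorem 4.5), the families $\{\tau:\mathcal{W}_{\tau}=\mathcal{P}\}$ are intrinsic to $\mathcal{W}=\mathcal{V}\cap\mathcal{B}$. Since (i) gives $\mathcal{U}\cap\mathcal{B}=\mathcal{V}\cap\mathcal{B}$, these families agree for $\mathcal{U}$ and $\mathcal{V}$, and it only remains to identify them with the index families of $\mathcal{V}$ itself.

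That last identification is where I expect the real fight, and it is the main obstacle: I need $\{\tau:(\mathcal{V}\cap\mathcal{B})_{\tau}=\mathcal{P}\}=\{\tau:\mathcal{V}_{\tau}=\mathcal{P}\}$ for $\mathcal{P}\in\{\mathcal{S},\mathcal{LNB},\mathcal{RNB}\}$. By Lemma 5.1(ii) the condition $\mathcal{V}_{\tau}=\mathcal{P}$ is equivalent to $\mathcal{V}_{\tau}\cap\mathcal{B}=\mathcal{P}$, so the whole point reduces to the commutation $(\mathcal{V}\cap\mathcal{B})_{\tau}=\mathcal{V}_{\tau}\cap\mathcal{B}$, i.e.\ that the retraction $\mu_{\mathcal{B}}$ commutes with the lower operators. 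One inclusion, $(\mathcal{V}\cap\mathcal{B})_{\tau}\subseteq\mathcal{V}_{\tau}\cap\mathcal{B}$, is immediate: the lower operators are order preserving and fix $\mathcal{B}$ by (2). The hard inclusion $\mathcal{V}_{\tau}\cap\mathcal{B}\subseteq(\mathcal{V}\cap\mathcal{B})_{\tau}$ I would prove first for a single letter $T_{p}$ and then iterate, exploiting that $\mu_{\mathcal{B}}$ is a \emph{complete} retraction (so it preserves the meet defining $\mathcal{V}_{T_{p}}=\bigcap[\mathcal{V}]_{T_{p}}$) together with the fact that $\mu_{\mathcal{B}}$ carries the $T_{p}$-class of $\mathcal{V}$ onto the $T_{p}$-class of $\mathcal{V}\cap\mathcal{B}$, sending bottom to bottom --- the lower-operator companion of Theorem 5.2(v). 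The delicate point will be the surjectivity/compatibility of $\mu_{\mathcal{B}}$ with the complete congruence $T_{p}$ (that $\mathcal{A}\,T_{p}\,\mathcal{A}'$ forces $\mathcal{A}\cap\mathcal{B}\,T_{p}\,\mathcal{A}'\cap\mathcal{B}$, and that every band variety $T_{p}$-related to $\mathcal{V}\cap\mathcal{B}$ is realized as $\mathcal{Y}\cap\mathcal{B}$ for some $\mathcal{Y}\,T_{p}\,\mathcal{V}$, e.g.\ by forming joins $\mathcal{Y}\vee\mathcal{V}$ and using that $T_{p}$ is a complete congruence). Once this commutation is secured, combining it with Lemma 5.1(ii) and the band-ladder observation closes (i)$\Rightarrow$(ii) and hence the corollary.
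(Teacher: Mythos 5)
Your proposal is correct and follows essentially the same route as the paper: (ii)$\Rightarrow$(i) via Corollary 5.4 and Theorem 5.2(ii), (ii)$\Leftrightarrow$(iii) as a direct reformulation through $\tau\chi_{_{\mathcal{V}}}=\mathcal{V}_{\tau K^{\ast}}$ and the equivalences from the proof of Lemma 5.1(i), and (i)$\Rightarrow$(ii) by combining the commutation $(\mathcal{V}\cap\mathcal{B})_{\tau}=\mathcal{V}_{\tau}\cap\mathcal{B}$ with Lemma 5.1(ii). The identity you single out as the main obstacle is exactly the step the paper's proof uses without comment in the chain $\mathcal{V}_{\tau}\cap\mathcal{B}=(\mathcal{V}\cap\mathcal{B})_{\tau}=(\mathcal{U}\cap\mathcal{B})_{\tau}=\mathcal{U}_{\tau}\cap\mathcal{B}$, so you have correctly located the load-bearing fact rather than missed anything.
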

\begin{proof} (i) $\Longleftrightarrow$ (ii).  
First assume that $\mathcal{U}\; {\bf B}\; \mathcal{V}$ and let $\tau \in \Theta$ be such 
that $\mathcal{U}_{\tau} = \mathcal{P} \in \{\mathcal{S, LNB,} \mathcal{RNB}\}$.  Then
$$
\mathcal{V}_{\tau}\cap \mathcal{B} = (\mathcal{V}\cap \mathcal{B})_{\tau} = 
(\mathcal{U}\cap \mathcal{B})_{\tau} = \mathcal{U}_{\tau}\cap \mathcal{B} 
= \mathcal{U}_{\tau} =\mathcal{P} \in \{\mathcal{S, LNB, RNB}\}.
$$
By Lemma 4.1, we have $\mathcal{V}_{\tau} = \mathcal{P} = \mathcal{U}_{\tau}$, as required.  
Now suppose that $\mathcal{U, V}$ are related as in part (ii).   From Corollary 5.4, we have
\begin{eqnarray*}
\mathcal{U}^{{\bf B}} &=& \mathcal{CR}\cap \bigcap\limits_{\mathcal{U}_{\tau} = \mathcal{S}}
\mathcal{S}^{\overline{\tau}}
\cap
\bigcap\limits_{\mathcal{U}_{\tau} = \mathcal{LNB}} \mathcal{LNB}^{\overline{\tau}}
\cap \bigcap\limits_{\mathcal{U}_{\tau} = \mathcal{RNB}} \mathcal{RNB}^{\overline{\tau}}\\
&=& \mathcal{CR}\cap \bigcap\limits_{\mathcal{V}_{\tau} = \mathcal{S}}
\mathcal{S}^{\overline{\tau}}
\cap
\bigcap\limits_{\mathcal{V}_{\tau} = \mathcal{LNB}} \mathcal{LNB}^{\overline{\tau}}
\cap \bigcap\limits_{\mathcal{V}_{\tau} = \mathcal{RNB}} \mathcal{RNB}^{\overline{\tau}}\\
&=& \mathcal{V}^{{\bf B}}.
\end{eqnarray*}
Therefore $\mathcal{U}\;{\bf B}\;\mathcal{V}$.\\

\noindent
(iii)  Recall that $\tau\chi_{_{\mathcal{U}}} = \mathcal{U}_{\tau K^{\ast}}$ and 
$\tau\chi_{_{\mathcal{V}}} = \mathcal{V}_{\tau K^{\ast}}$.  Part (iii) is then a simple 
reformulation of (ii).
\end{proof}


\section{Ladders in $[\mathcal{S}, L\mathcal{O}]$}.

Our next goal is to study the nature of some $K$-classes in $\mathcal{L}(\mathcal{O})$ and 
$\mathcal{L}(L\mathcal{O})$ in some 
detail.  For any $\mathcal{V} \in \mathcal{L}(\mathcal{O})$ we have that 
$\emptyset\chi_{\mathcal{V}} = \mathcal{V}_K \subseteq \mathcal{G}$.  So our interest is 
focussed on those elements $\varphi \in \Phi$ for which $\tau\varphi \leq \mathcal{G}$ for 
all $\tau \in \Theta^1$ and, for $\mathcal{L}(L\mathcal{O})$, our interest is in those $\varphi 
\in \Phi$ for which $\tau\varphi \leq \mathcal{CS}$ for all $\tau \in \Theta^1$.    
Pol\'{a}k ([Po3], Theorem 4.2 (2)) considers 
the intervals $[\mathcal{S}, \mathcal{O}]$ and $[\mathcal{S}, L\mathcal{O}]$.\\  

\noindent
For any $\mathcal{V} \in \mathcal{L}(L\mathcal{O})$, we have $\mathcal{V}_K \in \mathcal{L}(\mathcal{G}) 
\cup (\mathcal{L}(\mathcal{CS})\backslash \mathcal{L}(\mathcal{R}e\mathcal{G}))$ so that 
$\mathcal{V}K\cap \mathcal{K}_0 = \{\mathcal{V}_K\}$ - a singleton set.  Therefore the 
interesting part of $\mathcal{V}K$ lies above $\mathcal{S}$ and so we concentrate our 
attention on the intervals $[\mathcal{S, O}]$ and $[\mathcal{S}, L\mathcal{O}]$.  Furthermore, 
in the spirit of Theorem 5.2 (i), (ii), it seems natural to break these intervals into two 
sublattices, namely those varieties containing $\mathcal{B}$ and those that do not.    \\

Let $\overline{[\mathcal{B, O}]} = [\mathcal{S}, \mathcal{O}]\backslash [\mathcal{B}, \mathcal{O}]$
denote the {\em complement} of the interval $[\mathcal{B, O}]$ in $[\mathcal{S}, \mathcal{O}]$.  \\

Note that $\mathcal{L}(\mathcal{G}) \cup \mathbb{N}_{\,3}^*$ is a sublattice 
(and ideal) in $\mathcal{K}$. \\

\begin{theorem}
{\rm (i)} $\Phi_{[\mathcal{B, O}]}$ consists of all order preserving mappings of $\Theta^1$ into 
$\mathcal{L}(\mathcal{G})$.\\
{\rm (ii)} $\Phi_{[\mathcal{B, O}]}$ is an interval in $\Phi$.\\
{\rm (iii)} $\Phi_{\overline{[\mathcal{B, O}]}}$ consists of all order preserving mappings $\varphi$ 
of $\Theta^1$ into $\mathcal{L}(\mathcal{G})\cup {\mathbf{N}}_3^{\ast}$ satisfying (P1)-(P4) 
and the additional condition:\\

$T^{\ast}: \tau\varphi = T^{\ast}$ for some $\tau \in \Theta$.\\

\noindent
{\rm (iv)}  $\Phi_{\overline{[\mathcal{B, O}]}}$ is a sublattice but  not a complete sublattice 
of $\Phi$.
\end{theorem}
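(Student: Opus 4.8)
The plan is to work throughout via the isomorphism $\chi\colon[\mathcal{S,CR}]\to\Phi$ of Theorem 4.5, so that each assertion about $\Phi_{\mathcal P}$ becomes an assertion about the subset $\mathcal P\subseteq[\mathcal{S,CR}]$ read off from ladders. Two auxiliary facts will be used repeatedly. \emph{(A)} If $\mathcal{W}\in\mathcal{L}(\mathcal{O})$ then $\mathcal{W}_K\in\mathcal{L}(\mathcal{G})$: by Lemma 2.1(ii) the bottom of any $K$-class meeting $\mathcal{L}(L\mathcal{O})$ lies in $\mathcal{L}(\mathcal{G})$ or in $\mathcal{L}(\mathcal{CS})\setminus\mathcal{L}(\mathcal{R}e\mathcal{G})$, and a completely simple variety is orthodox only if it is a rectangular-group variety, so the second alternative is excluded once $\mathcal{W}_K\subseteq\mathcal{W}\subseteq\mathcal{O}$. \emph{(B)} If $\mathcal{U}\in\mathcal{L}(\mathcal{G})$ then $\mathcal{U}_\tau=\mathcal{T}$ for every $\tau\in\Theta$, whence $\mathcal{U}_{\tau K^\ast}=\mathcal{T}_{K^\ast}=T^\ast$: indeed $\mathcal{U}\subseteq\mathcal{G}\subseteq\mathcal{RRO}\cap\mathcal{LRO}$ gives $\mathcal{U}_0=\mathcal{U}_1=\mathcal{T}$, and an induction on $|\tau|$ shows $\mathcal{U}_\tau\subseteq\mathcal{G}$ is a group variety, hence $\notin\{\mathcal{S,LNB,RNB}\}$, so Lemma 4.1(ii) lets us replace the $\Theta$-operators by the $\Lambda$-operators and compute $\mathcal{U}_\tau=\mathcal{U}_{(i,m)}=\mathcal{T}$.

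For (i), let $\mathcal{V}\in[\mathcal{B,O}]$. By (2) and order-preservation $\mathcal{B}=\mathcal{B}_\tau\subseteq\mathcal{V}_\tau\subseteq\mathcal{O}$, so $\mathcal{V}_\tau\notin\mathbb{N}_9$ and $\tau\chi_{\mathcal V}=(\mathcal{V}_\tau)_K\in\mathcal{L}(\mathcal{G})$ by \emph{(A)}; thus $\chi_{\mathcal V}$ is an order-preserving map into $\mathcal{L}(\mathcal{G})$. Conversely, given order-preserving $\varphi\colon\Theta^1\to\mathcal{L}(\mathcal{G})$, conditions (P1),(P2) are clear, (P3),(P4) are vacuous, and (P5),(P6) hold because their left-hand sides equal $T^\ast$ by \emph{(B)}; hence $\varphi\in\Phi$, and for $\mathcal{V}=\varphi\xi$ we get $\mathcal{B}\subseteq\mathcal{V}$ from Lemma 5.1(i) (no value is $T^\ast$) and $\mathcal{V}\subseteq\mathcal{V}^K=(\emptyset\varphi)^K=\mathcal{O}H(\emptyset\varphi)\subseteq\mathcal{O}$ from Lemma 2.1(ii). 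This proves (i). Part (ii) is then immediate: $\chi$ being an order isomorphism carries the interval $[\mathcal{B,O}]$ onto the interval $[\chi_{\mathcal B},\chi_{\mathcal O}]$ of $\Phi$.

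For (iii) I would verify the two inclusions. If $\mathcal{V}\in\overline{[\mathcal{B,O}]}$ then $\chi_{\mathcal V}\in\Phi$ supplies (P1)--(P4); each $\mathcal{V}_\tau\subseteq\mathcal{O}$, so by \emph{(A)} and Notation 3.4 the value $(\mathcal{V}_\tau)_{K^\ast}$ lies in $\mathcal{L}(\mathcal{G})\cup\mathbb{N}_3^{\ast}$; and $\mathcal{B}\not\subseteq\mathcal{V}$ forces, by Lemma 5.1(i), some $\tau\chi_{\mathcal V}=T^\ast$, necessarily with $\tau\in\Theta$ since $\emptyset\chi_{\mathcal V}=\mathcal{V}_K\in\mathcal{K}_0$. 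Conversely, an order-preserving $\varphi\colon\Theta^1\to\mathcal{L}(\mathcal{G})\cup\mathbb{N}_3^{\ast}$ obeying (P1)--(P4) and condition $T^\ast$ automatically satisfies (P5),(P6): when the hypotheses $\emptyset\varphi,\sigma\varphi\in\mathcal{K}_0$ hold, these values lie in $\mathcal{K}_0\cap(\mathcal{L}(\mathcal{G})\cup\mathbb{N}_3^{\ast})=\mathcal{L}(\mathcal{G})$, so their images under $(\cdot)_{\tau K^\ast}$ equal $T^\ast$ by \emph{(B)} and the required inequalities are trivial. Thus $\varphi\in\Phi$; writing $\mathcal{V}=\varphi\xi$, condition $T^\ast$ with Lemma 5.1(i) gives $\mathcal{B}\not\subseteq\mathcal{V}$, while $\mathcal{V}\subseteq\mathcal{V}^K=(\emptyset\varphi)^K\subseteq\mathcal{O}$ (again \emph{(A)} and Lemma 2.1(ii)), so $\mathcal{V}\in\overline{[\mathcal{B,O}]}$.

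Finally (iv). Joins and meets in $\Phi$ are computed pointwise in $\mathcal{K}^{\Theta^1}$ (Theorem 4.4), and $\mathcal{L}(\mathcal{G})\cup\mathbb{N}_3^{\ast}$ is a sublattice and ideal of $\mathcal{K}$, so the pointwise join or meet of two members of $\Phi_{\overline{[\mathcal{B,O}]}}$ again takes values there and lies in $\Phi$. For condition $T^\ast$: since $\Theta^1$ is ordered by decreasing length and $T^\ast=\min\mathcal{K}$, each such ladder equals $T^\ast$ on all sufficiently long words (order-preservation propagates a single $T^\ast$ value to all smaller, i.e.\ longer, words), so a common longer word carries both the pointwise join and meet to $T^\ast$. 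By (iii) the join and meet lie in $\Phi_{\overline{[\mathcal{B,O}]}}$, so it is a sublattice. It is not complete: taking a strictly increasing chain $\mathcal{W}_2\subsetneq\mathcal{W}_3\subsetneq\cdots$ of proper subvarieties of $\mathcal{B}$ with $\mathcal{S}\subseteq\mathcal{W}_n$ and $\bigvee_n\mathcal{W}_n=\mathcal{B}$ — for instance $\mathcal{W}_n$ the variety generated by the finite free band $FB_n$ — each $\mathcal{W}_n\in\overline{[\mathcal{B,O}]}$, yet $\chi$ preserves arbitrary joins, so $\bigvee_n\chi_{\mathcal{W}_n}=\chi_{\mathcal B}\notin\Phi_{\overline{[\mathcal{B,O}]}}$ (the constant ladder $\mathcal{T}$ violates condition $T^\ast$). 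The single external ingredient — that $\mathcal{B}$ is the union of such a chain, i.e.\ is not a compact element of $\mathcal{L}(\mathcal{B})$ — is the only real obstacle and is supplied by the classical description of the band lattice ([PR99]; Birjukov, Fennemore, Gerhard).
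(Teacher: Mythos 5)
Your proof is correct and follows essentially the same route as the paper: both argue through the isomorphism $\chi$, verify (P1)--(P6) for the candidate sets using the fact that group-variety values collapse to $T^{\ast}$ under the lower $\tau K^{\ast}$ operators, invoke Lemma 5.1 to detect containment of $\mathcal{B}$, and obtain non-completeness from the join of the proper subvarieties of $\mathcal{B}$ being $\mathcal{B}$ itself. Your only deviations are cosmetic -- routing the forward inclusion of (i) through $\mathcal{V}_{\tau}\subseteq\mathcal{O}$ and your fact (A) rather than the paper's direct comparison $\tau\chi_{_{\mathcal{V}}}\subseteq\tau\chi_{_{\mathcal{O}}}=\mathcal{G}$, and spelling out the eventual-$T^{\ast}$ argument for closure under finite joins which the paper leaves as ``clearly''.
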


{\bf Note:} Thus elements of $\Phi_{[\mathcal{B, O}]}$ never assume the value $T^{\ast}$ 
while for any element 
$\varphi$ of $\Phi_{\overline{[\mathcal{B, O}]}}$ there is some $\tau \in \Theta$ such that 
$\tau\varphi = T^{\ast}$ and therefore, since $\varphi$ is order-preserving, there is some 
$\sigma \in \Theta$ such that $\tau\varphi = T^{\ast}$ for all $\tau \leq \sigma$.  \\
\begin{proof}  (i)  Let $\mathcal{P}$ denote the set of 
all order preserving mappings of $\Theta^1$ into 
$\mathcal{L}(\mathcal{G})$.\\

\noindent
 Let $\mathcal{V} \in [\mathcal{B, O}]$.  By Lemma 5.1, it follows that $\tau\chi_{\mathcal{V}} 
\supseteq \mathcal{T}$ for all $\tau$.  Now $\tau\chi_{\mathcal{O}} = \mathcal{G}$ for all 
$\tau \in \Theta^1$.   By (P2), $\chi$ is order preserving and so, for all $\tau \in \Theta^1$, we have  $\mathcal{T} \subseteq 
\tau\chi_{\mathcal{V}} \subseteq \tau\chi_{\mathcal{O}} = \mathcal{G}$ so that 
$\tau\chi_{\mathcal{V}} \in \mathcal{L}(\mathcal{G})$.  Since  
$\chi_{\mathcal{V}}$ is order-preserving we have $\chi_{\mathcal{V}} \in \mathcal{P}$.  \\

\noindent
For the converse, we must first show that $\mathcal{P}\subseteq \Phi$.  
Let $\varphi \in \mathcal{P}$.  Since $\mathcal{L}(\mathcal{G}) \subseteq \mathcal{K}_0$, 
it is clear that  
$\varphi$ satisfies (P1) and (P2).  Conditions (P3) and (P4) are satisfied vacuously since 
$\tau\varphi \in \mathcal{L}(\mathcal{G})$ for all $\tau$.  We have $\emptyset\varphi \in 
\mathcal{L}(\mathcal{G})$.  Hence, for any $\tau \in \Theta$, 
$$
(\emptyset \varphi)_{\tau K^{\ast}} = \mathcal{T}_{K^{\ast}} = T^{\ast} \subseteq \tau\varphi 
$$
so that $\varphi$ satisfies (P5).   Next let $\sigma ,\tau \in \Theta$ be such that $\sigma
\varphi \in \mathcal{K}_0$ (which is no restrction in this context) 
and $t(\sigma ) \neq h(\tau )$.  Then $\sigma \varphi \in 
\mathcal{L}(\mathcal{G})$ so that $(\sigma \varphi)_{\tau} = \mathcal{T}$.  Hence 
$(\sigma \varphi)_{\tau K^{\ast}} = T^{\ast} \subseteq (\sigma\tau)\varphi$.   Thus $\varphi$ 
satisfies (P6) so that $\varphi \in \Phi$ and $\mathcal{P} \subseteq \Phi.$ \\

\noindent
Conversely, let $\varphi \in \mathcal{P}$.  Since $\mathcal{P} \subseteq 
\Phi$, it follows from Theorem 4.5, that there exists a variety $\mathcal{V} \in [\mathcal{S, CR}]$ 
such that $\varphi = \chi_{\mathcal{V}}.$  By Lemma 5.1, $\mathcal{B} \subseteq \mathcal{V}$ so 
that $\mathcal{V}$ actually lies in $[\mathcal{B, CR}]$.  We also have $\mathcal{V}_K = 
\emptyset\chi_{\mathcal{V}} \in \mathcal{L}(\mathcal{G}).$   
But $\mathcal{G}^K = \mathcal{O}$ and so $\mathcal{V} \in \mathcal{L}(\mathcal{O})$.  
Therefore $\mathcal{V} \in [\mathcal{B, O}]$ as required.\\

\noindent
{\rm (ii)}  Since $\Phi_{[\mathcal{B, O}]}$ is the image of $[\mathcal{B, O}]$ under $\chi$, it 
must be an interval.\\

\noindent
{\rm (iii)}  Let $\mathcal{P}$ denote the set of all $\varphi$ as described in the statement.  
Let $\mathcal{V} \in \overline{[\mathcal{B, O}]}$.  By Theorem 4.5, $\chi_{\mathcal{V}} \in \Phi$ 
and therefore $\chi_{\mathcal{V}}$ satisfies conditions (P1) - (P4).  
By Lemma 5.1, there exists $\tau \in \Theta$ such that  $\tau\chi_{\mathcal{V}} = T^{\ast}$. 
Hence $\chi_{\mathcal{V}}$ 
satisfies (T)* and so $\chi_{\mathcal{V}} \in \Phi_{\overline{[\mathcal{B, O}]}}$.\\

\noindent
For the converse, we must first show that $\mathcal{P} \subseteq \Phi$.
Let $\varphi \in \mathcal{P}$.  Since 
$\varphi$ satisfies (P1) - (P4), by hypothesis, it remains to show that $\varphi$ satisfies 
(P5) and (P6).   However, once we recognize that we still have 
$\emptyset\varphi \in \mathcal{L}(\mathcal{G})$ and that if $\sigma\varphi \in \mathcal{K}_0$ 
we again have $\sigma\varphi \in \mathcal{L}(\mathcal{G})$, then the argument proceeds exactly as 
in part (i).   \\

\noindent
Now let $\varphi \in \mathcal{P}$.  Since 
$\Phi_{\overline{[\mathcal{B, O}]}} \subseteq 
\Phi$, it follows from Theorem 4.5, that there exists a variety $\mathcal{V} \in [\mathcal{S, CR}]$ 
such that $\varphi = \chi_{\mathcal{V}}.$  By Lemma 5.1, $\mathcal{B} \nsubseteq \mathcal{V}$.  
We also have $\mathcal{V}_K = \emptyset\chi_{\mathcal{V}} \in \mathcal{L}(\mathcal{G}).$   But $\mathcal{G}^K = \mathcal{O}$ and so $\mathcal{V} \in \mathcal{L}(\mathcal{O})$.  
Therefore $\mathcal{V} \in \overline{[\mathcal{B, O}]}$ as required.\\
\noindent
{\rm (iv)}  
Clearly, $\Phi_{\overline{[\mathcal{B, O}]}}$ is closed under finite joins and meets.  To see that $\Phi_{\overline{[\mathcal{B, O}]}}$ is not a complete lattice, however, it suffices to note that 
$\Phi_{\overline{[\mathcal{B, O}]}}$ contains every variety of bands other than the variety 
$\mathcal{B}$ of all bands and that the join of all proper subvarieties of $\mathcal{B}$ is 
$\mathcal{B}$ itself which does not lie in $\Phi_{\overline{[\mathcal{B, O}]}}$.
\end{proof} 

The next step is to extend Theorem 6.1 to the level of locally orthodox varieties.  The result is 
not quite so neat as for orthodox varieties for the following reason.  We have $\mathcal{O}_K = 
\mathcal{G} = \mathcal{O} \cap \mathcal{G}$ and, for any $\mathcal{V} \in \mathcal{L}(\mathcal{O})$ 
we have $\mathcal{V}_K = \mathcal{V}\cap \mathcal{G}$.  However, although $(L\mathcal{O})_K = 
\mathcal{CS} = L\mathcal{O} \cap \mathcal{CS}$, it is not the case that for any $\mathcal{V} \in 
\mathcal{L}(L\mathcal{O})$ we have $\mathcal{V}_K = \mathcal{V} \cap \mathcal{CS}$.  In particular, 
if $\mathcal{V}$ is a rectangular band of groups, that is $\mathcal{V} = \mathcal{U}\vee \mathcal{W}$ 
for some $\mathcal{U} \subseteq \mathcal{RB}, \mathcal{W} \subseteq \mathcal{G}$, then 
$\mathcal{V}_K = \mathcal{V}\cap \mathcal{G} = \mathcal{W}$.   So, whereas 
$\mathcal{K}_0 \cap \mathcal{L}(\mathcal{O}) = \{\mathcal{V}_K| 
\mathcal{V} \in \mathcal{L}(\mathcal{O})\} 
= \mathcal{L}(\mathcal{G})$ is a sublattice of $\mathcal{L}(\mathcal{CR})$, 
$$\mathcal{K}_0 \cap \mathcal{L}(L\mathcal{O}) = \{\mathcal{V}_K| 
\mathcal{V} \in L\mathcal{O}\} = (\mathcal{L}(\mathcal{CS})\backslash \mathcal{L}(\mathcal{R}e\mathcal{B})) 
\cup \mathcal{L}(\mathcal{G})
$$ 
which is not a sublattice of $\mathcal{L}(\mathcal{CR})$.  This is the same problem that we faced with 
the definition of the lattice structure of $\mathcal{K}$ and here we are really just dealing with an 
ideal in $\mathcal{K}$.  So, as before, $(\mathcal{L}(\mathcal{CS})\backslash \mathcal{L}(\mathcal{R}e\mathcal{B})) \cup \mathcal{L}(\mathcal{G})$ {\em is} a lattice, indeed a 
complete lattice with respect 
to the order inherited from $\mathcal{L}(\mathcal{CR})$.  Equivalently, it is a lattice with respect to 
the following lattice operations $\wedge$ and $\vee$ where $\vee$ is the join operation within 
$\mathcal{L}(\mathcal{CR})$ and
$$
\mathcal{U} \wedge \mathcal{V} = (\mathcal{U} \cap \mathcal{V})_K.
$$
This derives from the fact that we are simply picking convenient representatives from within the $K$-classes and giving 
the set of these representatives the order inherited from the quotient $\mathcal{L}(L\mathcal{O})/K$.\\

When we refer to $(\mathcal{L}(\mathcal{CS})\backslash \mathcal{L}(\mathcal{R}e\mathcal{B})) \cup \mathcal{L}(\mathcal{G})$ below in this 
section, we will always mean together with the lattice structure defined above.\\

We will also want to consider the extended lattice obtained by adjoining $ \mathbb{N}_{\,3}^*$ to 
obtain the set 
$$
\mathcal{K}_{\mathcal{CS}} = 
(\mathcal{L}(\mathcal{CS})\backslash \mathcal{L}(\mathcal{R}e\mathcal{B})) \cup \mathcal{L}(\mathcal{G}) 
\cup \mathbb{N}_{\,3}^*.
$$
It is straightforward to see that this is indeed a lattice with respect to the order relation 
inherited from $\mathcal{K}$.\\

Let $\overline{[\mathcal{B}, L\mathcal{O}]} = [\mathcal{S}, L\mathcal{O}]\backslash [\mathcal{B}, L\mathcal{O}]$
denote the {\em complement} of the interval $[\mathcal{B}, L\mathcal{O}]$ in $[\mathcal{S}, L\mathcal{O}]$.  \\

\begin{theorem}
{\rm (i)}  $\Phi_{[\mathcal{B}, L\mathcal{O}]}$ consists of all order-preserving mappings of $\Theta^1$ into \linebreak $(\mathcal{L}(\mathcal{CS})\backslash \mathcal{L}(\mathcal{R}e\mathcal{B})) \cup \mathcal{L}(\mathcal{G})$. \\
{\rm (ii)}  $\Phi_{[\mathcal{B}, L\mathcal{O}]}$ is an interval in $\Phi$.\\
{\rm (iii)} 
$\Phi_{\overline{[\mathcal{B}, L\mathcal{O}]}}$ consists of all mappings $\varphi$ of $\Theta^1$ into $(\mathcal{L}(\mathcal{CS})\backslash \mathcal{L}(\mathcal{R}e\mathcal{B})) \cup \mathcal{L}(\mathcal{G}) \cup \mathbb{N}_{\,3}^*$ satisfying conditions (P1) to (P4) and the additional conditions:\\

\noindent
(T)* $ \tau\varphi = T^{\ast}$ for some $\tau \in \Theta$.\\

\noindent
(P5)* $(\emptyset\varphi)_{T_{\ell} K^{\ast}} \leq T_{\ell}\varphi,;\; \;
(\emptyset\varphi)_{T_r K^{\ast}} \leq T_r\varphi$,\;\\

\noindent
(P6)* $(T_{\ell}\varphi)_{T_r K^{\ast}} \leq (T_{\ell}T_r)\varphi;\;\;
(T_r\varphi)_{T_{\ell} K^{\ast}} \leq (T_rT_{\ell})\varphi.$\\
{\rm (iv)} $\Phi_{\overline{[\mathcal{B}, L\mathcal{O}]}}$ is a sublattice but not a complete 
sublattice of $\Phi$.
\end{theorem}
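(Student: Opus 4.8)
My plan is to run the argument in close parallel with the proof of Theorem 6.1, the only structural novelty being that the target is now the completely simple part $\mathcal{C} := (\mathcal{L}(\mathcal{CS})\backslash\mathcal{L}(\mathcal{R}e\mathcal{B}))\cup\mathcal{L}(\mathcal{G})$, equipped with the modified operation $\mathcal{U}\wedge\mathcal{V}=(\mathcal{U}\cap\mathcal{V})_K$ discussed above, rather than the sublattice $\mathcal{L}(\mathcal{G})$. For (i) I let $\mathcal{P}$ be the set of all order-preserving maps $\Theta^1\to\mathcal{C}$. For $\Phi_{[\mathcal{B},L\mathcal{O}]}\subseteq\mathcal{P}$ I take $\mathcal{V}\in[\mathcal{B},L\mathcal{O}]$ and argue as in 6.1: by Lemma 5.1 each node satisfies $\tau\chi_{\mathcal{V}}\supseteq\mathcal{T}$, while order-preservation of $\chi$ gives $\tau\chi_{\mathcal{V}}\subseteq\tau\chi_{L\mathcal{O}}\subseteq\mathcal{CS}$; since each node lies in $\mathcal{K}_0$ and below $\mathcal{CS}$, it lies in $\mathcal{K}_0\cap\mathcal{L}(\mathcal{CS})=\mathcal{C}$. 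The reverse inclusion, together with (ii) and (iv), then follows the 6.1 template: once $\mathcal{P}\subseteq\Phi$ is known, Theorem 4.5 yields $\mathcal{V}$ with $\varphi=\chi_{\mathcal{V}}$, Lemma 5.1 gives $\mathcal{B}\subseteq\mathcal{V}$, and $\mathcal{V}\subseteq(\emptyset\varphi)^K\subseteq\mathcal{CS}^K=L\mathcal{O}$ places $\mathcal{V}$ in $[\mathcal{B},L\mathcal{O}]$; (ii) is the image of an interval under the order-isomorphism $\chi$; and (iv) repeats the 6.1(iv) observation that the complement contains every proper band variety but not their join $\mathcal{B}$. In (iii) the forward direction adds only that, for $\mathcal{V}\in\overline{[\mathcal{B},L\mathcal{O}]}$, Lemma 5.1 supplies some node equal to $T^{*}$, which is (T)$^{*}$.

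The substance of the theorem is therefore the verification that $\mathcal{P}\subseteq\Phi$ in (i), and, in (iii), that the \emph{short-word} conditions (P5)$^{*}$ and (P6)$^{*}$ already force the full (P5) and (P6). Conditions (P1),(P2) are immediate and, in case (i), (P3),(P4) hold vacuously because $\mathcal{C}$ omits $\mathbb{N}_{\,3}^*$, while in case (iii) they are hypotheses. Everything hinges on the behaviour of the lower trace operators on completely simple varieties. The key structural input I would isolate (and prove, or cite from the theory of $\mathcal{L}(\mathcal{CS})$) is a \emph{collapse lemma}: for $\mathcal{W}\in\mathcal{L}(\mathcal{CS})$ a second trace operator drives $\mathcal{W}$ into $\mathcal{L}(\mathcal{G})$, so that $\mathcal{W}_{T_{\ell}T_r},\mathcal{W}_{T_rT_{\ell}}\in\mathcal{L}(\mathcal{G})$ and hence $\mathcal{W}_{\rho}=\mathcal{T}$ for all sufficiently long $\rho$, recalling that on group varieties every lower trace operator is already trivial.

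Granting the collapse lemma, the reduction proceeds as follows. For (P5), if $\emptyset\varphi\in\mathcal{L}(\mathcal{G})$ then $(\emptyset\varphi)_{\tau}=\mathcal{T}$ for every $\tau\in\Theta$ and the condition is vacuous; if $\emptyset\varphi$ is properly completely simple then $(\emptyset\varphi)_{\tau}$ is a group variety for $|\tau|\ge 2$ and trivial beyond, so only the two length-one instances survive, which are exactly (P5)$^{*}$. The analogous bookkeeping reduces (P6) to its length-two instances with $|\sigma|=1$, namely (P6)$^{*}$, together with the cases in which a single trace operator is applied to a node $\sigma\varphi$ with $|\sigma|\ge 2$. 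This last family is the delicate point and the step I expect to be the main obstacle: here order-preservation alone (which only bounds longer words above, never below) is insufficient, so one must show that under the hypotheses such a node $\sigma\varphi$ is necessarily a group variety, whence $(\sigma\varphi)_{T_p}=\mathcal{T}$ and the condition holds trivially. Establishing that the short-word conditions, together with the layered order on $\Theta^1$ and the collapse lemma, force every node at distance $\ge 2$ from $\emptyset$ into $\mathcal{L}(\mathcal{G})$ is precisely where the completely simple structure theory must do the real work.

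A secondary point requiring care throughout (i)--(iv) is that $\mathcal{C}$ is not a $\cap$-subsemilattice of $\mathcal{L}(\mathcal{CR})$: its meet is the modified $\mathcal{U}\wedge\mathcal{V}=(\mathcal{U}\cap\mathcal{V})_K$. I would therefore phrase ``order-preserving into $\mathcal{C}$'' purely in terms of the inherited order, as the paper does when introducing $\mathcal{C}$, so that membership of the nodes in $\mathcal{C}$ and the sublattice assertion in (iv) are read off from the order and the identification $\mathcal{K}_0\cap\mathcal{L}(\mathcal{CS})=\mathcal{C}$, never from naive intersections taken in $\mathcal{L}(\mathcal{CR})$.
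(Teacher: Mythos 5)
Your handling of (i), (ii), (iv) and of the forward inclusion in (iii) follows the paper's own route: the same candidate set $\mathcal{P}$, the same appeal to Lemma 5.1 together with $\tau\chi_{L\mathcal{O}}=\mathcal{CS}$ and the identification $\mathcal{K}_0\cap\mathcal{L}(\mathcal{CS})=(\mathcal{L}(\mathcal{CS})\backslash\mathcal{L}(\mathcal{R}e\mathcal{B}))\cup\mathcal{L}(\mathcal{G})$, the same round trip through Theorem 4.5 and $\mathcal{CS}^K=L\mathcal{O}$, and the same band-variety argument for non-completeness. Your ``collapse lemma'' is also exactly the fact the paper uses: one lower trace operator sends a completely simple variety into $\{\mathcal{T},\mathcal{LZ},\mathcal{RZ}\}$ and a second one sends it to $\mathcal{T}$, which disposes of (P5) and of (P6) whenever $|\tau|\geq 2$.

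The gap is in the one remaining case of (P6), namely $|\sigma|\geq 2$ with $|\tau|=1$, which you correctly single out as the crux but then neither prove nor could prove by the route you propose. You suggest showing that the hypotheses force $\sigma\varphi\in\mathcal{L}(\mathcal{G})$ for every $\sigma$ of length at least $2$; this is false. Take $\varphi$ with $\tau\varphi=\mathcal{CS}$ for $|\tau|\leq 2$ and $\tau\varphi=T^{\ast}$ for $|\tau|\geq 3$. This $\varphi$ is order preserving, takes its values in the prescribed set, satisfies (P1)--(P4) (the last two vacuously), (T)$^{\ast}$, (P5)$^{\ast}$ and (P6)$^{\ast}$ (each left-hand side there lies in $\{L^{\ast},R^{\ast}\}$ and each right-hand side is $\mathcal{CS}\in\mathcal{K}_0$), yet for $\sigma=T_rT_{\ell}$ and $\tau=T_r$ one gets $(\sigma\varphi)_{T_rK^{\ast}}=\mathcal{CS}_{T_rK^{\ast}}=\mathcal{LZ}_{K^{\ast}}=L^{\ast}$ while $(\sigma\tau)\varphi=T^{\ast}$, and $L^{\ast}\not\leq T^{\ast}$. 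So (P6) fails, $\varphi\notin\Phi$, and the listed conditions do not characterize $\Phi_{\overline{[\mathcal{B},L\mathcal{O}]}}$: no argument of the kind you sketch can close the gap, because the node at depth $2$ really can be a properly completely simple variety sitting immediately above a node equal to $T^{\ast}$. For what it is worth, the paper's proof passes over the same case by invoking (P6)$^{\ast}$ for arbitrary $\sigma$ of length $\geq 2$, although (P6)$^{\ast}$ as stated covers only $|\sigma|=1$; so you have located a genuine soft spot in the statement, but your proposal does not repair it --- what is actually needed is the $|\tau|=1$ instance of (P6) for \emph{all} $\sigma$ whose value is properly completely simple, not just for $\sigma\in\{T_{\ell},T_r\}$.
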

{\bf Note.}  Clearly the conditions (P5)* and (P6)* are weaker than conditions (P5) and 
(P6) since we have to deal with arbitrary $\sigma, \tau$ in (P5) and (P6).  The big 
gain is that they are much easier to verify in specific examples. 
\begin{proof}
(i)  Let $\mathcal{P}$ denote the set of all order-preserving mappings of $\Theta^1$ into 
$(\mathcal{L}(\mathcal{CS})\backslash \mathcal{L}(\mathcal{R}e\mathcal{B})) \cup \mathcal{L}(\mathcal{G})$.  
Let $\mathcal{V} \in [\mathcal{B}, L\mathcal{O}]$.  By Lemma 5.1, it follows that $\tau\chi_{\mathcal{V}} 
\supseteq \mathcal{T}$ for all $\tau$.  Now $\tau\chi_{L\mathcal{O}} = \mathcal{CS}$ for all 
$\tau \in \Theta^1$.   By (P2), $\chi$ is order preserving and so, for all $\tau \in \Theta^1$, we have  $\mathcal{T} \subseteq 
\tau\chi_{\mathcal{V}} \subseteq \tau\chi_{L\mathcal{O}} = \mathcal{CS}$ so that 
$\tau\chi_{\mathcal{V}} \in \mathcal{L}(\mathcal{CS})$.  But $\tau\chi_{\mathcal{V}} = 
\mathcal{V}_{\tau K^{\ast}} = 
\mathcal{V}_{\tau K} \in (\mathcal{L}(\mathcal{CS})\backslash \mathcal{L}(\mathcal{R}e\mathcal{B})) \cup \mathcal{L}(\mathcal{G})$.  Since $\chi_{\mathcal{V}} \in \Phi$, it follows that $\chi_{\mathcal{V}}$ 
is order preserving.  Hence $\chi_{\mathcal{V}} \in \Phi_{[\mathcal{B}, L\mathcal{O}]}$\\

\noindent
For the converse, we must first show that $\mathcal{P}\subseteq \Phi$.  
Let $\varphi \in \mathcal{P}$.  Clearly   
$\varphi$ satisfies (P1) and (P2) by the definition of $\mathcal{P}$.  
Conditions (P3) and (P4) are satisfied vacuously since 
$\tau\varphi \in (\mathcal{L}(\mathcal{CS})\backslash \mathcal{L}(\mathcal{R}e\mathcal{B})) 
\cup \mathcal{L}(\mathcal{G})$ for 
all $\tau\in \Theta^1$.  Now let $\tau \in \Theta$.  Then $(\emptyset \varphi)_{\tau} \in 
\{\mathcal{T, LZ, RZ}\}$ 
so that $(\emptyset \varphi)_{\tau K^{\ast}} \in \{T^{\ast}, 
L^{\ast}, R^{\ast} \}$.  Therefore $(\emptyset \varphi)_{\tau K^{\ast}} \leq \mathcal{T} 
\leq \tau\varphi$.  Thus $\varphi$ satisfies (P5).\\

Next let $\sigma ,\tau \in \Theta$ be such that $\sigma
\varphi \in \mathcal{K}_0$ and $t(\sigma ) \neq h(\tau )$.  Then $\sigma \varphi, 
(\sigma\tau)\varphi \in 
(\mathcal{L}(\mathcal{CS})\backslash \mathcal{L}(\mathcal{R}e\mathcal{B})) \cup \mathcal{L}(\mathcal{G})$.  Hence $(\sigma \varphi)_{\tau} \in 
\{\mathcal{T, LZ, RZ}\}$ so that $(\sigma \varphi)_{\tau K^{\ast}} \in \{T^{\ast}, 
L^{\ast}, R^{\ast} \}$.  Therefore $(\sigma \varphi)_{\tau K^{\ast}} \leq \mathcal{T} 
\leq \sigma\tau\varphi$.  Thus $\varphi$ satisfies (P6) so that $\varphi \in \Phi$.  
Consequently $\mathcal{P} \subseteq \Phi.$ \\

\noindent
Now let $\varphi \in \Phi_{[\mathcal{B}, L\mathcal{O}]}$.  Since 
$\Phi_{[\mathcal{B}, L\mathcal{O}]}  \subseteq 
\Phi$, it follows from Theorem 4.5, that there exists a variety $\mathcal{V} \in [\mathcal{S, CR}]$ 
such that $\varphi = \chi_{\mathcal{V}}.$  By Lemma 5.1, $\mathcal{B}\subseteq \mathcal{V}$ so that 
$\mathcal{V}$ actually lies in the interval $[\mathcal{B, CR}]$.   Moreover,  $\mathcal{V}_K = \emptyset\chi_{\mathcal{V}} \in 
(\mathcal{L}(\mathcal{CS})\backslash \mathcal{L}(\mathcal{R}e\mathcal{B})) \cup \mathcal{L}(\mathcal{G}).$   But $\mathcal{CS}^K = L\mathcal{O}$ and so $\mathcal{V} \in \mathcal{L}(L\mathcal{O})$.   Therefore $\mathcal{V} \in 
[\mathcal{B}, L\mathcal{O}]$ as required.\\

\noindent
(ii)  This follows immediately from the fact that $\chi$ is an isomorphism.\\

\noindent
(iii)  Let $\mathcal{P}$ denote the set of $\varphi\in \Phi$ satisfying the conditions layed  
in the statement.  
Let $\mathcal{V} \in \overline{[\mathcal{B}, L\mathcal{O}]}$. 
 By Theorem 4.5, $\chi_{\mathcal{V}} \in \Phi$ 
and therefore $\chi_{\mathcal{V}}$ satisfies conditions (P1) - (P6).    Since (P5)* and (P6)* are 
weaker than (P5) and (P6), respectively, it follows that $\chi_{\mathcal{V}}$ also satisfies 
(P5)* and (P6)*.  By Lemma 5.1, there exists $\tau \in \Theta$ such that  
$\tau\chi_{\mathcal{V}} = T^{\ast}$.  Hence 
$\chi_{\mathcal{V}}$ satisfies (T)* and so $\chi_{\mathcal{V}} \in \mathcal{P}$.\\

\noindent
We must now show that $\mathcal{P}\subseteq \Phi$.  
 Let $\varphi \in \mathcal{P}$.  That   
$\varphi$ satisfies (P1) - (P4) follows from the definition of 
$\Phi_{\overline{[\mathcal{B}, L\mathcal{O}]}}$.    Now let $\tau \in \Theta$.  
We have $(\emptyset \varphi)\in 
(\mathcal{L}(\mathcal{CS})\backslash \mathcal{L}(\mathcal{R}e\mathcal{B})) \cup \mathcal{L}(\mathcal{G})$.  
If $|\tau| \geq 2$ then this implies that  $(\emptyset \varphi)_{\tau} = \mathcal{T}$ and therefore 
$(\emptyset \varphi)_{\tau K^{\ast}} = T^{\ast} \leq \tau\varphi$.  On the other hand, if 
$|\tau| = 1$, that is $\tau \in \{T_{\ell}, T_r\}$, then $(\emptyset\varphi)_{\tau} \leq \tau\varphi$ by 
(P5)*.  Hence, $\varphi$ satisfies (P5).  \\

Next let $\sigma ,\tau \in \Theta$ be such that $\sigma
\varphi \in \mathcal{K}_0$ and $t(\sigma ) \neq h(\tau )$.  Then $\sigma \varphi  \in 
(\mathcal{L}(\mathcal{CS})\backslash \mathcal{L}(\mathcal{R}e\mathcal{B})) \cup \mathcal{L}(\mathcal{G})$.  Hence, if $|\tau| \geq 2$ then  $(\sigma \varphi)_{\tau} = \mathcal{T}$ and $(\sigma \varphi)_{\tau K^{\ast}} 
= T^{\ast}$.  Hence $(\sigma \varphi)_{\tau K^{\ast}} = T^{\ast} \leq (\sigma\tau)\varphi$.  On the 
other hand, if $|\tau| = 1$, that is $\tau \in \{T_{\ell}, T_r\}$, then 
$(\sigma \varphi)_{\tau K^{\ast}} \leq (\sigma\tau)\varphi$ by (P6)*.  Thus $\varphi$ satisfies (P6) 
and  $\mathcal{P} \subseteq \Phi$.\\
Conversely, let $\varphi \in \mathcal{P}$.  Since 
$\Phi_{\overline{[\mathcal{B, O}]}} \subseteq 
\Phi$, it follows from Theorem 4.5, that there exists a variety $\mathcal{V} \in [\mathcal{S, CR}]$ 
such that $\varphi = \chi_{\mathcal{V}}.$  By Lemma 5.1, $\mathcal{B} \nsubseteq \mathcal{V}$.  
We also have $\mathcal{V}_K = \emptyset\chi_{\mathcal{V}} \in (\mathcal{L}(\mathcal{CS})\backslash \mathcal{L}(\mathcal{R}e\mathcal{B})) \cup \mathcal{L}(\mathcal{G}).$   But $\mathcal{CS}^K = L\mathcal{O}$ and so $\mathcal{V} \in \mathcal{L}(L\mathcal{O})$.  
Therefore $\mathcal{V} \in \overline{[\mathcal{B}, L\mathcal{O}]}$ as required.\\

\noindent
(iv)  
Now let $\varphi, \theta \in \Phi_{\overline{[\mathcal{B}, L\mathcal{O}]}}$.  Let $\tau \in \Theta^1$.  
Then both $\tau\varphi$ and $\tau\theta$ lie in $\mathcal{K}_{\mathcal{CS}}$ and therefore 
so also does $\tau(\varphi \vee \theta) = \tau\varphi \vee \tau\theta$.  Also, it is clear 
that $\varphi \vee \theta$  
satisfies condition (T)*.   Now $\varphi \vee \theta \in \Phi$.  Hence $\varphi \vee \theta$ 
satisfies (P1) - (P6) and therefore also (P1) - (P4) and (P5)*, (P6)*.  In other words, 
$\varphi \vee \theta \in \Phi_{\overline{[\mathcal{B}, L\mathcal{O}]}}$.   An almost identical 
argument, with slight adjustment, will now establish that 
$\varphi \wedge \theta \in \Phi_{\overline{[\mathcal{B}, L\mathcal{O}]}}$.  Thus 
$\Phi_{\overline{[\mathcal{B}, L\mathcal{O}]}}$ is a sublattice of $\Phi$.  The same argument 
as in Theorem 6.1(iv) will show that $\Phi_{\overline{[\mathcal{B}, L\mathcal{O}]}}$ is not a 
complete sublattice of $\Phi$.
\end{proof}

There is a great range in the complexity of $K$-classes, even  within $\mathcal{L}(\mathcal{O})$.   
Of course, the best known $K$-class of all is $\mathcal{L}(\mathcal{B}) = \mathcal{T}K$.  Not surprisingly, 
the picture becomes more complicated as soon as we look outside  $\mathcal{L}(\mathcal{B})$. 
But there is much that can be said ranging from complete intervals that can be determined to 
extremely complicated $K$-classes.  We conclude with several observations 
illustrating the complexity of $K$-classes in general, even at a low level within 
$\mathcal{L}(\mathcal{CR})$.  Both $K$ and $K_{\ell}$ are complete congruences.  Since 
$K_{\ell}$ is a finer congruence than $K$, we can gain some insight into the complexity of an 
individual $K$-class by considering the quotient relation $K/K_{\ell}$.  We show that 
even at the level of $\mathcal{LRO}$ we have an interval within a $K_{\ell}$ that is isomorphic 
to $\mathcal{L}(\mathcal{G})$.  Recall that for $\mathcal{P} 
\in \mathcal{L}(\mathcal{G})$, we have 
$\mathcal{P}K = [\mathcal{P}, \mathcal{O}H\mathcal{P}]$ and  that for any $\mathcal{Q} \in 
\mathcal{L}(\mathcal{CS})\backslash\mathcal{L}(\mathcal{R}e\mathcal{G})$, we have 
$\mathcal{Q}K = [\mathcal{Q}, 
L(\mathcal{O}D\mathcal{Q})]$.\\

\begin{theorem}  Let $\mathcal{P} \in \mathcal{L}(\mathcal{G}), \mathcal{Q}\in 
\mathcal{CS}\backslash(\mathcal{R}e\mathcal{G})$.\\
{\rm (i)} $\mathcal{P}K/K_{\ell}$
 contains a sublattice isomorphic to $\mathcal{L}(\mathcal{P})$.\\
{\rm (ii)} $\mathcal{Q}K/K_{\ell}$ 
contains a sublattice isomorphic to 
$(\mathcal{L}(\mathcal{CS})\backslash \mathcal{L}(\mathcal{R}e\mathcal{G})) \cup 
\mathcal{L}(\mathcal{G})$.\\
{\rm (iii)}  $\mathcal{G}K/K_{\ell}$ and $\mathcal{CS}/K_{\ell}$
both have the cardinality of the continuum.  \\
{\rm (iv)} \begin{eqnarray*}
\mathcal{LRO}K_{\ell} &=& [\mathcal{S}\vee\mathcal{G}, \mathcal{LRO}]\\
\mathcal{LRO}(K_{\ell}\cap {\mathbf B}) &=& [\mathcal{LRB}\vee \mathcal{G}, \mathcal{LRO}]\\
&\cong& \mathcal{L}(\mathcal{G}).
\end{eqnarray*}
\end{theorem}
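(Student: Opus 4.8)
The plan is to push everything through the ladder isomorphism $\chi\colon[\mathcal{S},\mathcal{CR}]\to\Phi$ of Theorem 4.5 together with the characterizations in Lemma 4.7. The first step I would record is the reduction that, inside a single $K$-class, the relation $K_{\ell}=K\cap T_{\ell}$ collapses to $T_{\ell}$: by Lemma 4.7(ii) two members $\mathcal{U},\mathcal{V}$ of a $K$-class lie in the same $K_{\ell}$-class iff they have the same $T_{\ell}$-chain, so for any variety $\mathcal{W}$ the quotient $\mathcal{W}K/K_{\ell}$ is order-isomorphic to the poset $\{\mathcal{U}_{T_{\ell}}\mid\mathcal{U}\in\mathcal{W}K\}$ of $T_{\ell}$-minima. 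Every construction below will then be carried out at the level of ladders, reading joins and meets off pointwise, using that $\Phi$ is a complete sublattice of $\mathcal{K}^{\Theta^1}$ (Theorem 4.4) and that on $\mathcal{L}(\mathcal{G})$ the meet in $\mathcal{K}$ is ordinary intersection.

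For (i), for each $\mathcal{H}\in\mathcal{L}(\mathcal{P})$ I would define $\varphi_{\mathcal{H}}\in\mathcal{K}^{\Theta^1}$ by $\emptyset\varphi_{\mathcal{H}}=\mathcal{P}$, $T_{\ell}\varphi_{\mathcal{H}}=\mathcal{H}$, and $\tau\varphi_{\mathcal{H}}=\mathcal{T}$ for every other $\tau$. Since $\mathcal{P}$ is a group variety we have $\mathcal{P}_{\tau}=\mathcal{T}$ for all $\tau\in\Theta$, so (P5) and (P6) reduce to $T^{\ast}\leq\tau\varphi_{\mathcal{H}}$, which is automatic, while (P1)--(P4) are immediate; hence $\varphi_{\mathcal{H}}\in\Phi$ and $\varphi_{\mathcal{H}}=\chi_{\mathcal{V}_{\mathcal{H}}}$ for a unique $\mathcal{V}_{\mathcal{H}}\in\mathcal{P}K$. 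As only the $T_{\ell}$-vertex varies, the pointwise join and meet of $\varphi_{\mathcal{H}}$ and $\varphi_{\mathcal{H}'}$ are $\varphi_{\mathcal{H}\vee\mathcal{H}'}$ and $\varphi_{\mathcal{H}\cap\mathcal{H}'}$, so $\mathcal{H}\mapsto\varphi_{\mathcal{H}}$ is a lattice embedding of $\mathcal{L}(\mathcal{P})$ whose images have pairwise distinct $T_{\ell}$-vertices, hence distinct $K_{\ell}$-classes; composing with the quotient map gives the required copy of $\mathcal{L}(\mathcal{P})$. Part (ii) uses the identical construction with $\mathcal{P}$ replaced by $\mathcal{Q}$ and $\mathcal{H}$ ranging over $(\mathcal{L}(\mathcal{CS})\setminus\mathcal{L}(\mathcal{R}e\mathcal{G}))\cup\mathcal{L}(\mathcal{G})$ with $\mathcal{H}\subseteq\mathcal{Q}$ (taking $\mathcal{Q}=\mathcal{CS}$ realizes the whole lattice). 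I expect the genuine obstacle to sit here: when the values are true completely simple varieties, $\mathcal{Q}_{T_{\ell}}$ and $\mathcal{H}_{\tau}$ are no longer trivial, so verifying (P5) and (P6) requires tracking the symbols $T^{\ast},L^{\ast},R^{\ast}$ and using that the meet in $\mathcal{K}$ is $(\,\cdot\cap\cdot\,)_K$ rather than $\cap$.

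Part (iii) then follows by cardinality. Applying (i) with $\mathcal{P}=\mathcal{G}$ and (ii) with $\mathcal{Q}=\mathcal{CS}$ embeds $\mathcal{L}(\mathcal{G})$ into each of $\mathcal{G}K/K_{\ell}$ and $\mathcal{CS}/K_{\ell}$, and $|\mathcal{L}(\mathcal{G})|=2^{\aleph_0}$ is classical; the reverse bound is immediate since $\mathcal{L}(\mathcal{CR})$ has at most $2^{\aleph_0}$ elements (fully invariant congruences on the countable $CR_X$), so both quotients have exactly the cardinality of the continuum.

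For (iv) I would first compute the ladder of $\mathcal{LRO}$. Being orthodox, $\mathcal{LRO}_K=\mathcal{LRO}\cap\mathcal{G}=\mathcal{G}$, so $\emptyset\chi_{\mathcal{LRO}}=\mathcal{G}$; since $[\mathcal{S},\mathcal{LRO}]$ is a $T_{\ell}$-class (Lemma 2.1(iii)) we get $\mathcal{LRO}_{T_{\ell}}=\mathcal{S}$, hence $T_{\ell}\chi_{\mathcal{LRO}}=T^{\ast}$, and as every vertex of length $\geq2$ lies below $T_{\ell}$ in $\Theta^1$, all of these are $T^{\ast}$ too. By Lemma 4.7(iii), $\mathcal{U}\,K_{\ell}\,\mathcal{LRO}$ iff $\mathcal{U}_K=\mathcal{G}$ and $\mathcal{U}_{T_{\ell}}=\mathcal{S}$, i.e. iff $\mathcal{U}\in\mathcal{LRO}K\cap[\mathcal{S},\mathcal{LRO}]=[\mathcal{G}\vee\mathcal{S},\mathcal{LRO}]$, giving the first identity; intersecting with the $\mathbf{B}$-class $[\mathcal{LRO}\cap\mathcal{B},\mathcal{LRO}^{\mathbf{B}}]=[\mathcal{LRB},\mathcal{LRB}^{\mathbf{B}}]$ (Theorem 5.2(ii)) yields $[\mathcal{LRB}\vee\mathcal{G},\mathcal{LRO}]$. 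Finally, every $\mathcal{V}$ in this interval has a ladder agreeing with $\chi_{\mathcal{LRO}}$ at every vertex except the single vertex $T_r$ (the only one not forced to $T^{\ast}$), and Corollary 5.5 shows that $\mathbf{B}$-equivalence to $\mathcal{LRO}$ keeps $T_r\chi_{\mathcal{V}}$ inside $\mathcal{K}_0$; hence $\mathcal{V}\mapsto T_r\chi_{\mathcal{V}}=\mathcal{V}_{T_r K}$ is a lattice isomorphism onto the interval of $\mathcal{K}_0$ below $\mathcal{G}$, namely $\mathcal{L}(\mathcal{G})$. The residual work here is the two endpoint computations $\mathcal{LRO}_{T_r K}=\mathcal{G}$ and $(\mathcal{LRB}\vee\mathcal{G})_{T_r K}=\mathcal{T}$, which I would settle from the structure of left regular orthogroups (their completely simple members being left groups).
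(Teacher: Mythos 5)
Your proof is correct and follows essentially the same route as the paper: both push the statement through the ladder isomorphism of Theorem 4.5, realize the required sublattices by explicit families of ladders that are constant below the first level, separate $K_{\ell}$-classes inside a fixed $K$-class via the $T_{\ell}$-chains of Lemma 4.7, invoke Adyan for (iii), and derive (iv) from Lemma 2.1(iii) together with the ${\bf B}$-relation. The only differences are cosmetic: in (i) and (ii) the paper's ladders take the value $\mathcal{U}$ at \emph{every} nonempty $\tau$ rather than only at $T_{\ell}$, and in (iv) the paper exhibits the inverse map $\mathcal{U}\longrightarrow \varphi_{\mathcal{U}}\xi$ explicitly where you instead read the isomorphism off the $T_r$-vertex of the ladder.
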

\begin{proof}
(i) Let $\mathcal{U}\in \mathcal{L}(\mathcal{P})$ and define the mapping
 $\varphi_{\mathcal{U}}: \Theta^1 \longrightarrow \mathcal{L}(\mathcal{P})$ 
as follows:
\[ \tau\varphi_{\mathcal{U}} = \left\{ \begin{array}{ll}
\mathcal{P}  &  \mbox{if $\tau = \emptyset$}\\
\mathcal{U}  &  \mbox{otherwise}.
\end{array}
\right. \]
Clearly $\varphi_{\mathcal{U}}$ is order preserving and therefore, by Theorem 6.1, 
belongs to $\Phi_{[\mathcal{B, O}]}$.  It is also evident, by examining the corresponding 
ladders, that the mapping $\varphi:
\mathcal{U} \longrightarrow \varphi_{_{\mathcal{U}}}$ is a monomorphism of 
$\mathcal{L}(\mathcal{P})$ into $\Phi$.

By Theorem 4.5 and 6.1, $\varphi_{\mathcal{U}} = \chi_{_{\mathcal{V}}}$, for 
$\mathcal{V} = \varphi_{\mathcal{U}}\xi \in 
[\mathcal{B, O}]$.  Then $\mathcal{V}_K = \emptyset \chi_{_{\mathcal{V}}} = 
\emptyset\varphi_{\mathcal{U}} = \mathcal{P}$, so that $\mathcal{V}\in \mathcal{P}K$.  It is now 
routine to check that $\{\varphi_{\mathcal{U}}|\mathcal{U}\in \mathcal{L}(\mathcal{P})\}$ is a 
sublattice of $\Phi$ and therefore that 
$\{\varphi_{\mathcal{U}}\xi|\mathcal{U}\in \mathcal{L}(\mathcal{P})\}$ is a sublattice of 
$\mathcal{P}K$ which is isomorphic to $\mathcal{L}(\mathcal{P})$ via the isomorphism 
$\mathcal{U} \longrightarrow \varphi_{\mathcal{U}}\xi$.  By Lemma 4.7 and the remarks following that 
lemma, we see that, for distinct varieties $\mathcal{U}_1, \mathcal{U}_2 \in \mathcal{L}(\mathcal{G})$, 
the varieties of the form $\varphi_{\mathcal{U}_i}\xi, i =1,2$ lie in distinct $T_{\ell}$-classes.  But 
such varieties lie in the same $K$-class $\mathcal{P}K$.  Hence these varieties lie in distinct 
$K_{\ell}$-classes in $\mathcal{P}K$ and the mapping 
$\mathcal{U} \longrightarrow \varphi_{\mathcal{U}}\xi K_{\ell}$ is 
a lattice embedding of $\mathcal{L}(\mathcal{P})$ into $(\mathcal{P}K)/K_{\ell}$.\\

\noindent
(ii)  Let $\mathcal{U} \in (\mathcal{L}(\mathcal{Q})\backslash \mathcal{L}(\mathcal{R}e\mathcal{G})) 
\cup \mathcal{L}(\mathcal{Q}\cap\mathcal{G})$.  Note that $(\mathcal{L}(\mathcal{Q})\backslash 
\mathcal{L}(\mathcal{R}e\mathcal{G})) 
\cup \mathcal{L}(\mathcal{Q}\cap\mathcal{G})$ is not a sublattice of $\mathcal{L}(\mathcal{CR})$ but 
it is a lattice in the inherited order.  Define the mapping
 $$\varphi_{\mathcal{U}}: \Theta^1 \longrightarrow 
(\mathcal{L}(\mathcal{CS})\backslash \mathcal{L}(\mathcal{R}e\mathcal{B})) \cup 
\mathcal{L}(\mathcal{G})$$ 
as follows:
\[ \tau\varphi_{\mathcal{U}} = \left\{ \begin{array}{ll}
\mathcal{Q}  &  \mbox{if $\tau = \emptyset$}\\
\mathcal{U}  &  \mbox{otherwise}.
\end{array}
\right. \]
Clearly $\varphi_{\mathcal{U}}$ is order preserving and therefore belongs to $\Phi_{[\mathcal{B}, 
L\mathcal{O}]}$.  By Theorems 4.5 and 6.2, 
$\varphi_{\mathcal{U}} = \chi_{_{\mathcal{V}}}$, for some $\mathcal{V} 
= \varphi_{\mathcal{U}}\xi \in 
[\mathcal{B}, L\mathcal{O}]$.  Then $\mathcal{V}_K = \emptyset \chi_{_{\mathcal{V}}} = 
\emptyset\varphi_{\mathcal{U}} = \mathcal{Q}$, so that $\mathcal{V}\in \mathcal{Q}K$.  It is now 
routine to check that $\{\varphi_{\mathcal{U}}\xi|\mathcal{U}\in 
(\mathcal{L}(\mathcal{Q})\backslash \mathcal{L}(\mathcal{R}e\mathcal{B})) \cup 
\mathcal{L}(\mathcal{Q} \cap \mathcal{G})
\}$ is a 
sublattice of $\mathcal{Q}K$ which is isomorphic to $
(\mathcal{L}(\mathcal{Q})\backslash \mathcal{L}(\mathcal{R}e\mathcal{B})) \cup 
\mathcal{L}(\mathcal{Q}\cap \mathcal{G})$ via the isomorphism 
$\mathcal{U} \longrightarrow \varphi_{\mathcal{U}}\xi$.
By Lemma 4.7 and the remarks following that 
lemma, we see that, for distinct varieties $\mathcal{U}_1, \mathcal{U}_2 \in 
(\mathcal{L}(\mathcal{Q})\backslash \mathcal{L}(\mathcal{R}e\mathcal{B})) \cup 
\mathcal{L}(\mathcal{Q}\cap \mathcal{G})$,
the varieties of the form $\varphi_{\mathcal{U}_i}\xi, i =1,2$ lie in distinct $T_{\ell}$-classes.  But 
such varieties lie in the same $K$-class $\mathcal{Q}K$.  Hence these varieties lie in distinct 
$K_{\ell}$-classes in $\mathcal{Q}K$ and the mapping 
$\mathcal{U} \longrightarrow \varphi_{\mathcal{U}}\xi K_{\ell}$ is 
an embedding of 
$(\mathcal{L}(\mathcal{Q})\backslash \mathcal{L}(\mathcal{R}e\mathcal{B})) \cup 
\mathcal{L}(\mathcal{Q}\cap \mathcal{G})$
 into $(\mathcal{Q}K)/K_{\ell}$. \\

\noindent
(iii)  This follows from the fact that $\mathcal{L}(\mathcal{G})$ has the cardinality of the 
continuum, see  [Ad].\\

\noindent
(iv)  Recall from Lemma 2.1(iii) that $\mathcal{LRO}T_{\ell} = [\mathcal{S}, \mathcal{LRO}]$ 
while clearly $\mathcal{LRO} \in \mathcal{G}K$.  Hence

\begin{eqnarray*}
\mathcal{LRO}K_{\ell} &=& \mathcal{LRO}K\cap \mathcal{LRO}T_{\ell} = [\mathcal{G}, \mathcal{O}] 
\cap [\mathcal{S, LRO}]\\
&=& [\mathcal{S}\vee \mathcal{G}, \mathcal{LRO}]
\end{eqnarray*}  
which establishes the first equality.  Next note that $\mathcal{LRO}\cap \mathcal{B} = 
\mathcal{LRB}$ so that $\mathcal{LRO}\subseteq \mathcal{LRB}^{\mathcal{B}}$.  But also 
we have $\mathcal{LRB}^{\mathcal{B}} \cap \mathcal{B} = \mathcal{LRB}$.  Hence 
$\mathcal{LRB}^{\mathcal{B}} \subseteq \mathcal{LRO}$ and equality prevails: 
$\mathcal{LRB}^{\mathcal{B}} = \mathcal{LRO}$.  We then have
\begin{eqnarray*}
\mathcal{LRO}(K_{\ell}\cap {\mathbf{B}}) &=& \mathcal{LRO}K_{\ell}\cap \mathcal{LRO}{\mathbf{B}}\\
&=& [\mathcal{S}\vee \mathcal{G}, \mathcal{LRO}] \cap [\mathcal{LRB}, \mathcal{LRB}^{\mathcal{B}}]\\
&=&  [\mathcal{S}\vee \mathcal{G}, \mathcal{LRO}] \cap [\mathcal{LRB}, \mathcal{LRO}]\\
&=& [\mathcal{LRB} \vee \mathcal{G}, \mathcal{LRO}].
\end{eqnarray*}
That establishes the second equality in part (iv) and we now consider the final isomorphism.  
Let $\mathcal{U}\in \mathcal{L}(\mathcal{G})$ and define the mapping
 $\varphi_{\mathcal{U}}: \Theta^1 \longrightarrow \mathcal{L}(\mathcal{P})$ 
as follows:
\[ \tau\varphi_{\mathcal{U}} = \left\{ \begin{array}{ll}
\mathcal{G}  &  \mbox{if $\tau = \emptyset$}\\
\mathcal{U}  &  \mbox{if $\tau = T_r$}\\
T^{\ast}  &  \mbox{otherwise}.
\end{array}
\right. \]

\noindent
Clearly, $\varphi_{_{\mathcal{U}}}$ satisfies the conditions in Theorem 6.1(iii).   Therefore, 
in particular, $\varphi{_{_\mathcal{U}}} \in \Phi$.  Let $\varphi$ denote the mapping 
$\mathcal{U} 
\longrightarrow \varphi_{_{\mathcal{U}}}$ of $\mathcal{L}(\mathcal{G})$ into $\Phi$.  Clearly 
$\varphi$ is a monomorphism.    It is also evident that $\mathcal{L}(\mathcal{G})\varphi$ 
is an interval in $\Phi$ and therefore that $\mathcal{L}(\mathcal{G})\varphi\xi$ is 
an interval in $\mathcal{L}(\mathcal{CR})$.  However, the least and greatest elements 
in  $\mathcal{L}(\mathcal{G})\varphi$ are $\varphi_{_{\mathcal{T}}}$ and 
$\varphi_{_{\mathcal{G}}}$, respectively.   But, on inspection, it is clear that 
$\varphi_{_{\mathcal{T}}} = \chi_{_{\mathcal{LRB}\vee \mathcal{G}}}$ and 
$\varphi_{_{\mathcal{G}}} = 
\chi_{_{\mathcal{LRO}}}$.  Consequently, we see that the mapping $\varphi\xi$ is an 
isomorphism of $\mathcal{L}(\mathcal{G})$ to the interval 
 $[\mathcal{LRB}\vee \mathcal{G}, \mathcal{LRO}]$, as required.
\end{proof}

This concludes our discussion of the general background required for further more focussed 
studies of particular $K$-classes to be presented in Part II.


\vskip1.6cm

\noindent
{\small
Department of Mathematics \\
Simon Fraser University \\
Burnaby, British Columbia\\
Canada V5A 1S6 \\[0.2cm]
Email: nreilly@sfu.ca
}
\end{document}